\documentclass[a4paper,11pt]{article}

\usepackage{amsmath,amsthm,amssymb,amsfonts}
\usepackage[margin=1in]{geometry} 
\usepackage[english]{babel}
\usepackage[utf8]{inputenc}
\usepackage{float}
\usepackage{tensor}
\usepackage{csquotes}% Recommended
\usepackage{enumitem}
\usepackage{mathtools}
\usepackage{setspace}
\usepackage{mathrsfs}
\usepackage{booktabs}
\usepackage{bbm}
\usepackage{graphicx}
\usepackage{caption}
\usepackage{subcaption}
\usepackage{array}
\usepackage[
  backend=bibtex,
  style=authoryear,
  maxcitenames=2,   % replaces maxnames=2: applies only to citations
  mincitenames=1,   % "et al." keeps 1 name when truncating
  maxbibnames=999,  % bibliography always shows all authors
  uniquename=false,
  natbib=true,
  doi = false,
  url = false,
  isbn = false
]{biblatex}

\newtheorem{theorem}{Theorem}
\newtheorem{proposition}[theorem]{Proposition}
\newtheorem{lemma}[theorem]{Lemma}
\newtheorem{corollary}[theorem]{Corollary}
\newtheorem{definition}[theorem]{Definition}

\newtheorem{example}[theorem]{Example}

\AfterEndEnvironment{theorem}{\noindent\ignorespaces}
\AfterEndEnvironment{corollary}{\noindent\ignorespaces}
\AfterEndEnvironment{lemma}{\noindent\ignorespaces}
\AfterEndEnvironment{definition}{\noindent\ignorespaces}
\AfterEndEnvironment{proposition}{\noindent\ignorespaces}
\AfterEndEnvironment{assumption}{\noindent\ignorespaces}
\AfterEndEnvironment{example}{\noindent\ignorespaces}

\newcommand{\ebb}{\mathbb{E}}
\newcommand{\pbb}{\mathbb{P}}
\newcommand{\nbb}{\mathbb{N}}
\newcommand{\rbb}{\mathbb{R}}

\newcommand{\hbb}{\mathbb{H}}
\newcommand{\Fcal}{\mathcal{F}}

\newcommand{\Pcal}{\mathcal{P}}

\newcommand{\Bcal}{\mathcal{B}}

\newcommand{\Rcal}{\mathcal{R}}
\newcommand{\Scal}{\mathcal{S}}

\newcommand{\Lcal}{\mathcal{L}}

\newcommand{\Acal}{\mathcal{A}}
\newcommand{\Mcal}{\mathcal{M}}

\newcommand{\Ycal}{\mathcal{Y}}

\DeclareMathOperator{\arcosh}{arcosh}

\newcommand\norm[1]{\left\lVert#1\right\rVert}
\newcommand\abs[1]{\left\lvert#1\right\rvert}
\newcommand{\indep}{\perp \!\!\! \perp}

\title{Boundedly exchangeable graphs}
\author{
  Leon-Roman Rinke \thanks{Corresponding author.
    Leibniz Universit\"at Hannover, House of Insurance, Welfengarten 1,
    30167 Hannover, Germany. E-mail:
    {\tt leon-roman.rinke@insurance.uni-hannover.de}}
  \hspace{2cm} Stefan Weber \\[1.0ex]
  \textit{Leibniz Universit\"at Hannover}
}
\date{\today}

\begin{document}

\citetrackerfalse

\maketitle

\begin{abstract}
An infinite jointly exchangeable adjacency matrix is a mixture of graphon models, and its graph is almost surely empty or dense; classical matrix exchangeability therefore excludes sparse nonempty graphs. We study a point-process alternative, \textit{graph processes}, whose vertices are identified with the atoms of a locally finite point process on a Polish latent space and whose edges are drawn conditionally independently from a symmetric connection kernel $W$. For a Poisson graph process with diffuse $\sigma$-finite intensity, we show that its edge measure is jointly exchangeable as a random measure if and only if $W$ is essentially constant. Non-constant kernels therefore require a different exchangeability notion. We introduce \textit{bounded exchangeability}, under which every nontrivial bounded restriction determines an infinite jointly exchangeable adjacency matrix that is unique in distribution, with an associated local graphon representation. We then study \textit{threshold models} on homogeneous Riemannian manifolds, in which Poisson-distributed vertices are connected whenever their intrinsic distance is at most a fixed threshold. On every infinite-volume manifold in this class, the model is sparse, its empirical degree distribution converges to a Poisson law, and its limiting clustering coefficient depends on an additional geometric parameter that can vary while the limiting degree law is held fixed.
\end{abstract}

\vspace{0.2cm}
\noindent\textbf{Keywords:} Random graphs, infinite graphs, exchangeable arrays, latent space models, random connection models

\vspace{0.2cm}
\noindent\textbf{2020 Mathematics Subject Classification:} Primary 05C80, 60G09; secondary 60D05, 60G55, 05C63.

\newpage
\newpage

\section{Introduction}

Random graph models often impose exchangeability to express the idea that vertex labels carry no information. For an infinite adjacency matrix $A=(A_{ij})_{i,j\geq 1}$, joint exchangeability means invariance in distribution under simultaneous permutations of rows and columns.

The Aldous--Hoover--Kallenberg representation, due to \citet{Hoover1979, Aldous1981, Kallenberg1989}, characterizes this symmetry as a mixture of graphon models: in the ergodic case, an exchangeable graph can be generated by assigning independent latent variables $U_i$ to the vertices and then connecting vertices $i$ and $j$ independently with probability $W(U_i,U_j)$. The symmetric measurable function $W$ is the corresponding graphon.

This classical symmetry has a structural limitation. Infinite jointly exchangeable graphs are almost surely either empty or dense (see, e.g., \citet{OrbanzRoy2015}), and hence do not describe sparse graphs whose edge count grows linearly in the number of vertices --- the regime of many real-world networks \citep{Newman2018}. The graphex framework of \citet{VeitchRoy2015, CaronFox2017} resolves this by changing the object of exchangeability: it imposes exchangeability not on the adjacency matrix, but on a random measure that encodes the edges. This yields sparse infinite graphs, but the resulting symmetry is no longer ordinary matrix exchangeability.

The present paper takes an alternative approach by employing a point-process construction of spatial random graphs. It demonstrates that classical matrix exchangeability is recovered on bounded observation windows. A \textit{graph process} is generated from two ingredients: a locally finite point process on a latent Polish space $S$ and a symmetric connection kernel $W:S^2\to[0,1]$. Vertices are encoded by the atoms of the point process, and, given their locations $(S_i)$, edges between distinct vertices are drawn conditionally independently as Bernoulli variables with parameters $W(S_i, S_j)$. The global graph may be sparse and non-exchangeable, but its bounded restrictions can still determine ordinary jointly exchangeable adjacency laws.

The local viewpoint is also natural in applications. Vertices are typically observed through a finite spatial, temporal, or relational window, while the underlying population is idealized as large or unbounded. Sparsity is then a global scaling feature, whereas exchangeability is often plausible only after restricting attention to a bounded frame of observation in which labels carry no information. At the same time, the latent space may carry geometric content: distance can encode similarity, exposure, or interaction cost, and the volume growth of the space affects the number of potential neighbours. This makes it natural to ask for models in which local exchangeability, sparsity, and geometric structure are treated within the same framework.

This local viewpoint separates two questions. The first is structural: when does a point-process graph admit a local form of classical matrix exchangeability? The answer developed below is bounded exchangeability, under which every nontrivial bounded restriction determines an associated infinite jointly exchangeable adjacency matrix, unique in distribution, and hence a local graphon representation. The second question is geometric: when the latent space carries intrinsic notions of volume and distance, how do these geometric quantities determine the limiting graph statistics?

We address the second question on Riemannian manifolds. The vertex process is defined from Riemannian volume, and the connection rule is defined from intrinsic distance. In the threshold model, the manifold analogue of the random plane network of \citet{Gilbert1961}, two vertices are connected exactly when their Riemannian distance is at most a fixed radius $t$. On every homogeneous infinite-volume Riemannian manifold this gives a sparse, geometrically interpretable class of graph processes whose limiting degree law is controlled by the volume $V_t$ of a ball of radius $t$, while the limiting clustering coefficient carries additional geometric information through $p_t$, the probability that two independent uniform points of such a ball lie within distance $t$. The limiting degree law depends on $\delta V_t$, whereas the limiting clustering coefficient also depends on $p_t$. Consequently, after matching $\delta V_t$ and hence the limiting degree distribution, hyperbolic space has a strictly smaller limiting clustering coefficient than Euclidean space.

\subsection{Contributions}

\begin{enumerate}[label=(\roman*)]
    \item \textbf{Graph processes and graphex exchangeability.} We formulate graph processes as point-process-driven random graphs: the vertices are encoded by the atoms of a locally finite point process on a latent Polish space, and edges between distinct vertices are drawn conditionally independently from a symmetric connection kernel $W$. Under the standing assumptions of the graphex framework, we prove that for a Poisson graph process with diffuse $\sigma$-finite intensity measure, its edge measure --- which records only present edges --- is jointly exchangeable in the random-measure sense if and only if its connection kernel is essentially constant. Hence nonconstant connection kernels are incompatible with graphex-type exchangeability within this class.
    \item \textbf{Bounded exchangeability and local graphon representations.} \sloppy We introduce \textit{bounded exchangeability} as a local form of classical matrix exchangeability. For every nontrivial bounded observation window $B$, a boundedly exchangeable graph process determines an infinite jointly exchangeable adjacency matrix $A^B$, unique in distribution. This associated exchangeable law has an Aldous--Hoover--Kallenberg representation with local graphon $W_B = \widetilde{W} \circ(f_B\otimes f_B)$, where $f_B$ samples from the local vertex distribution on $B$ and $\widetilde{W}(x,y) \coloneqq \mathbf{1}\{x \neq y\} W(x,y)$ is the zero-diagonal version of $W$. We also describe how the local graphons and exchangeable matrices transform when observation windows are enlarged, restricted, or partitioned into disjoint subwindows.
    \item \textbf{Intrinsic graph processes on Riemannian manifolds.} We single out a geometrically natural class of graph processes on complete connected $d$-dimensional Riemannian manifolds $(M, g)$, taking the vertex intensity to be $\lambda_g$ for the Riemannian volume measure $\lambda_g$ and using the intrinsic distance $d_g$ in the connection rule. For distance-based kernels $W(x,y)=h(d_g(x,y))$, we prove a scale-invariance theorem.
    \item \textbf{Sparsity, degree distribution, clustering, and curvature.} For the threshold model on homogeneous infinite-volume manifolds, we prove asymptotic sparsity and identify the limiting degree and clustering statistics. The empirical degree distribution converges pointwise to a Poisson law determined by the expected number of points in a ball of connection radius. The clustering coefficient converges in probability to a limit that depends both on this expected local number of points and on the geometry of such balls. More precisely, the remaining geometric contribution is governed by how likely two independently chosen points in a connection ball are themselves close enough to be connected. Thus the limiting degree law is fixed by the expected number of points in a connection ball, whereas the clustering limit also reflects the shape of the underlying space. In the Euclidean–hyperbolic comparison, the degree law can be matched by matching these expected local point counts, but the clustering limit remains curvature-dependent; in particular, hyperbolic space yields strictly lower limiting clustering than Euclidean space.
\end{enumerate}

\subsection{Literature}

The probabilistic study of random graphs began in earnest with the foundational models of \citet{ErdosRenyi1959} and \citet{Gilbert1959}, in which edges are formed independently between a fixed number of vertices. A spatial counterpart was introduced shortly thereafter by \citet{Gilbert1961}, whose random plane network places vertices according to a Poisson point process in the Euclidean plane and connects pairs of points lying within a fixed distance. This model initiated a large body of work on random geometric graphs and continuum percolation; see, for example, \citet{MeesterRoy1996, Penrose2003} for comprehensive accounts. For the random geometric graph, limit theorems for length-power functionals, including the edge count, were obtained by \citet{Reitzner2017}, and \citet{Grygierek2020} derived a quantitative central limit theorem for edge counts in the high-dimensional regime. More general distance-dependent connection rules are treated in the random connection model, where pairs of Poisson points are connected independently with probability depending on their separation \citep{MeesterRoy1996, MeesterPenrose1997}; central limit theorems for its counts of finite components were established by \citet{Last2021}. Limit theory for related Poisson-based graphs extends beyond the Euclidean setting: \citet{Sambale2025} prove quantitative central limit theorems for the nearest neighbour embracing graph in Euclidean and hyperbolic space, and \citet{Rosen2025} study the degree of the origin and edge-length functionals of the hyperbolic radial spanning tree; in both cases the hyperbolic results are contrasted with their Euclidean counterparts. The threshold graph processes studied in this paper may be viewed as intrinsic, manifold-valued analogues of these spatial random graph models.

A second line of work relevant to the present paper concerns exchangeability. The probabilistic theory of exchangeable arrays was developed by \citet{Hoover1979, Aldous1981, Kallenberg1989}. In the setting of graphs, this theory is closely related to the graphon framework. Graphons were introduced by \citet{Lovasz2006} as limit objects for convergent sequences of dense graphs, with convergence expressed in terms of subgraph densities. \citet{Borgs2006} and \citet{Borgs2008} placed this theory on a metric footing through the cut metric, while \citet{Lovasz2010} further developed the topological structure of the space of graphons. \citet{Lovasz2012} provides a comprehensive account of the resulting theory of large dense graph limits. \citet{Diaconis2008} made explicit the connection between graph limits and the Aldous--Hoover--Kallenberg representation of jointly exchangeable random arrays. From this perspective, an infinite jointly exchangeable adjacency matrix can be represented via a graphon. This representation is powerful but intrinsically dense: a jointly exchangeable infinite random graph is almost surely either empty or dense; see \citet{OrbanzRoy2015}.

The graphex framework provides one of the principal resolutions of this tension between exchangeability and sparsity. Building on the representation theory for exchangeable random measures due to \citet{Kallenberg1990}, \citet{CaronFox2017} and \citet{VeitchRoy2015} constructed sparse random graphs from exchangeable random measures on $\rbb_{+}^{2}$. \citet{VeitchRoy2015} identified the graphex as the canonical representation for such models and established integrability conditions ensuring local finiteness. Subsequent work developed a corresponding sparse graph limit theory: \citet{Borgs2018} introduced graphon processes and studied convergence, \citet{Janson2022} developed graphex convergence, \citet{BorgsChayes2019} studied sampling consistency, and \citet{Veitch2019} addressed estimation. Different sparsity regimes are considered by \citet{Caron2023}. The present paper is complementary to this literature. Rather than imposing exchangeability on an edge measure and discarding isolated vertices, we instead ask how classical matrix exchangeability may be recovered on bounded restrictions of the underlying point process.

Another related area is latent-space network modelling. In the model of \citet{Hoff2002}, vertices are assigned unobserved positions in a Euclidean space and connection probabilities depend on latent distances or projections. This approach was extended by \citet{Handcock2007} to include clustering and by \citet{Salter2017} to multiview network data. The stochastic blockmodel of \citet{Holland1983} may be viewed as a latent-space model with a discrete latent space. Non-Euclidean latent spaces have also received substantial attention. Hyperbolic latent-space models were proposed by \citet{Krioukov2010} and \citet{Boguna2010}, motivated by the emergence of heterogeneous degrees and strong clustering in negatively curved spaces. Rigorous results on degree sequences and clustering in random hyperbolic graphs were obtained by \citet{Gugelmann2012}. Further work on hyperbolic network geometry includes the geometric renormalisation procedure of \citet{Garcia2018}. Spherical latent spaces have been studied, for example, by \citet{McCormick2015} in the context of aggregated relational data. For surveys on geometric and latent-space network models, see \citet{Smith2019} and \citet{Boguna2021}.

Inference for latent geometries has also been studied extensively. The problem of estimating the curvature of a latent space is addressed by \citet{Lubold2023} and \citet{Wilkins2022}, while the inverse problem of embedding an observed network into a latent space is discussed, for example, by \citet{Raftery2012}, \citet{Asta2024}, and \citet{Rastelli2024}. These works are primarily statistical in nature. By contrast, the present paper focuses on structural and asymptotic properties of graph processes generated from intrinsic Riemannian volume and distance.

The present paper develops a point-process framework for random graphs on general latent spaces and studies the form of matrix exchangeability induced by bounded restrictions. It then specialises this framework to homogeneous Riemannian manifolds and derives the corresponding limiting degree and clustering statistics, including their dependence on the underlying geometry.

\subsection{Outline}

The paper is organised as follows. Section~\ref{SectionGraphProcesses} introduces the graph-process framework: random adjacency measures directed by locally finite point processes and connected by measurable kernels. We discuss i.i.d.\ representations and compare graph processes with the graphon and graphex models, showing in particular that a Poisson graph process is jointly exchangeable as a random measure only when its connection kernel is essentially constant. Section~\ref{SectionExchangeability} introduces \textit{bounded exchangeability}: we show that every bounded restriction of a boundedly exchangeable graph process determines an infinite jointly exchangeable adjacency matrix, unique in distribution, and hence admits a local Aldous--Hoover--Kallenberg representation by a graphon; we also describe how these local graphons transform as the observation window changes.

Section~\ref{SectionRiemannianManifolds} specialises the construction to Riemannian manifolds, using intrinsic distance and volume to define a geometrically natural class of graph processes. For this class, we prove a scale-invariance theorem describing how simultaneous rescaling of the metric, vertex density, and connection rule leaves the induced graph distribution unchanged. The principal example is the threshold model, a manifold-valued analogue of Gilbert's random plane network. Section~\ref{SectionApplications} studies this model on homogeneous infinite-volume manifolds: we establish asymptotic sparsity and derive the limiting degree distribution and clustering coefficient. The asymptotic results for Euclidean and hyperbolic space show that, after matching the limiting degree distribution, the limiting clustering coefficients differ. The section concludes with simulations that illustrate the proved sparsity, degree, and clustering limits and the Euclidean–hyperbolic comparison.

\section{Graph Processes} \label{SectionGraphProcesses}

In this section we set up the graph-process framework used throughout the paper. The vertices are identified with the atoms of a locally finite point process on a Polish latent space $S$, and, conditionally on these latent locations, edges between distinct vertices are drawn independently according to a symmetric connection kernel $W : S^2 \to [0,1]$. In this sense, graph processes extend the conditional-independence structure of the graphon model beyond latent probability spaces, allowing in particular locally finite point processes with infinitely many atoms. We then compare this framework with the graphon and graphex constructions. The graphon model is recovered on probability spaces under i.i.d.\ sampling, whereas the graphex comparison shows that, under the standing assumptions of Section~\ref{SubsectionGraphexComparison}, a Poisson graph process is jointly exchangeable in the random-measure sense only when its connection kernel is essentially constant.

\subsection{Foundations}

Broadly speaking, a graph consists of a set of points together with a set of connections between pairs of these points. The points are called vertices or nodes, and the connections are called edges. We assume throughout that edges are undirected and unweighted, and no two vertices are joined by more than one edge. Formally, a graph is a pair $G=(v(G),e(G))$, where $v(G)$ is its (countable) vertex set and
\[
    e(G)\subseteq \bigl\{\{x,y\} \mid x,y\in v(G)\bigr\}
\]
is its edge set. Thus, edges are regarded as unordered pairs of vertices, and an edge of the form $\{x\}$ represents a loop at $x$. Unless otherwise stated we exclude loops, in which case the graph is called simple. When the vertices are enumerated by integers, a graph may be represented by an adjacency matrix --- a symmetric $\{0,1\}$-valued matrix $A=(A_{ij})_{i,j \geq 1}$ with $A_{ij}=1$ if and only if vertices $i$ and $j$ are adjacent; in particular, $A_{ii} = 0$ for simple graphs. This representation depends on the chosen enumeration.

For a general measure space $(S, \Scal, \mu)$ and $B \in \Scal$, we let $\mu\vert_B$ denote the restriction of the measure $\mu$ to $B$, that is, $\mu\vert_B(C) \coloneqq \mu(B \cap C)$ for all $C \in \Scal$. Following \citet{Kallenberg2017}, we work with a fixed complete separable metric space $(S, d)$ and write $\Bcal(S)$ for its Borel $\sigma$-algebra. We denote by $\widehat{\Bcal}(S)$ the ring of Borel sets that are bounded with respect to $d$. A measure $\mu$ on $S$ is locally finite if $\mu(B) < \infty$ for every $B \in \widehat{\Bcal}(S)$; we write $\Mcal(S)$ for the space of locally finite measures on $S$, a Polish space under the corresponding vague topology. We call $\mu \in \Mcal(S)$ a simple measure if it is a countable sum of distinct unit point masses.

All random elements in this paper are defined on a common probability space $(\Omega, \Fcal, \pbb)$, assumed rich enough to support the independent $\mathrm{U}[0,1]$ random variables used below. When needed, we use \citet[Corollary~8.18]{Kallenberg2021} to realise distributionally specified random elements on this probability space so that the corresponding representation holds almost surely. Unless stated otherwise, $U_{\emptyset}$, $(U_i)$ and $(U_{ij})$ denote independent arrays of i.i.d.\ $\mathrm{U}[0,1]$ random variables with $U_{ij} = U_{ji}$, independent of all other randomness. We write $X \indep Y$ when the random elements $X$ and $Y$ are independent. Finally, we define $\nbb_0 \coloneqq \nbb \cup \{0\}$, $\overline{\nbb} \coloneqq \nbb \cup \{\infty\}$, $\overline{\nbb}_0 \coloneqq \nbb_0 \cup
\{\infty\}$, and $\rbb_+ \coloneqq [0, \infty)$. The terminology below closely follows \citet{VeitchRoy2015}.
\begin{definition} \label{DefAdjMeasure}
An adjacency measure on $S$ is a simple measure $\xi$ on $S^2\times\{0,1\}$ such that
\begin{enumerate}[label=(\roman*)]
    \item there is a simple measure $\nu$ on $S$ with $\xi(C\times\{0,1\}) = \nu^{\otimes 2}(C)$ for every $C \in \Bcal(S)^{\otimes 2}$,
    \item $(\theta \otimes \mathrm{id})\,\xi = \xi$, where $\theta(x,y)=(y,x)$ and $\mathrm{id}$ denotes the identity function on $\{0,1\}$.
\end{enumerate}
For $B\in\mathcal{B}(S)$, the $B$-restriction of $\xi$ is the adjacency measure $\xi\vert_{B \times B \times \{0,1\}}$, which we denote by $\xi\vert_B$ for brevity.
\end{definition}
Here, local finiteness on $S^2 \times \{0,1\}$ is understood with respect to any $p$-product metric of the fixed metric on $S$ and the discrete metric on $\{0,1\}$, the choice being immaterial.

Let $\Acal(S)$ denote the class of adjacency measures on $S$. The set of atoms of $\xi \in \Acal(S)$ is
\[
\mathrm{At}(\xi)
\coloneqq
\bigl\{(x,y,a)\in S^2\times\{0,1\}:
\xi(\{(x,y,a)\})=1
\bigr\}.
\]
Projecting onto the first coordinate, we obtain $V(\xi) \coloneqq \pi_1(\mathrm{At}(\xi))$, the \textit{vertex set} of $\xi$. Observe that $\nu$ in Definition~\ref{DefAdjMeasure} is uniquely determined and equal to
\[
    \nu_{\xi} \coloneqq \sum_{x \in V(\xi)} \delta_x.
\]
We then say that $\xi$ is \textit{directed} by $\nu_{\xi}$; we simply write $\nu$ when $\xi$ can be inferred from the context. By Definition~\ref{DefAdjMeasure}, for each $x,y \in V(\xi)$ there is exactly one $a_{\xi}(x,y) \in \{0,1\}$ such that $(x,y,a_{\xi}(x,y)) \in \mathrm{At}(\xi)$. An enumeration $V(\xi) = \{s_i \mid i \leq n\}$ with $n \in \overline{\nbb}_0$, together with $a_{ij} \coloneqq a_{\xi}(s_i, s_j)$, yields
\[
\xi = \sum_{i,j \leq n} \delta_{(s_i, s_j, a_{ij})}.
\]
We call the $\{0,1\}$-valued symmetric array $(a_{ij})_{i,j \le n}$ the \textit{adjacency matrix} of the representation; it depends on the chosen enumeration of $V(\xi)$.

We now formalise a correspondence between adjacency measures and graphs.
\begin{definition} \label{DefAdjMeasureOfSimpleGraph}
    Let $G$ be an undirected, unweighted graph without multiple edges, possibly with loops, whose vertex set $v(G)\subseteq S$ is locally finite. The adjacency measure $\xi_G$ associated with $G$ is given by 
    \[
        \xi_G \coloneqq \sum_{x,y\in v(G)}
        \delta_{(x,y,\mathbf{1}\{\{x,y\}\in e(G)\})}.
    \]
\end{definition}
It is readily confirmed that this is indeed an adjacency measure. Contrary to the convention of \citet{VeitchRoy2015}, our definition retains vertices of degree zero. Consequently, two graphs $G$ and $G'$ have the same adjacency measure if and only if they have the same vertex and edge sets, hence are equal. The associated graph of an adjacency measure, recovering $G$ from its measure, is made precise in Definition~\ref{DefGraphAssociatedWithAdjMeasure}.
\begin{definition} \label{DefGraphAssociatedWithAdjMeasure}
    Let $\xi$ be an adjacency measure. The graph $G_\xi$ associated with $\xi$ has vertex set $v(G_\xi) \coloneqq V(\xi)$ and edge set
    \[
        e(G_{\xi}) \coloneqq \bigl\{ \{ x,y\} \mid x,y \in V(\xi) \ \text{and} \ a_{\xi}(x,y) = 1 \bigr\}.
    \]
\end{definition}
We now introduce randomness.
\begin{definition}
    A random adjacency measure on $S$ is a random measure $\xi$ on $S^2\times\{0,1\}$ such that $\pbb(\xi\in\Acal(S))=1$.
\end{definition}
By \citet[Lemma~1.6]{Kallenberg2017}, there exists a random variable $N \in \overline{\nbb}_0$ and a.s.\ distinct random elements $(S_i)_{i \leq N}$ of $S$ such that the point process $\nu_{\xi}$ on $S$ satisfies
\[
\nu_{\xi} \overset{a.s.}{=} \sum_{i \leq N} \delta_{S_i}.
\]
Setting $A_{ij} \coloneqq a_{\xi}(S_i, S_j)$, we obtain
\begin{equation} \label{EqRandomAdjacencyMeasure}
    \xi \overset{a.s.}{=} \sum_{i,j \le N} \delta_{(S_i, S_j, A_{ij})}.
\end{equation}
Conversely, any locally finite sum \eqref{EqRandomAdjacencyMeasure}, involving a random variable $N \in \overline{\nbb}_0$, a.s.\ distinct random elements $(S_i)_{i \le N}$ of $S$, and a $\{0,1\}$-valued symmetric random array $A \coloneqq (A_{ij})_{i,j \le N}$, is a random adjacency measure.

Our proposed random graph model rests on the correspondence between graphs and adjacency measures: its vertices are the atoms of a point process, and its edges are drawn conditionally independently according to a connection kernel, made precise in Definition~\ref{DefGraphProcess} below.
\begin{definition}
    A connection kernel on $S$ is a symmetric measurable function $W : S^2 \to [0,1]$.
\end{definition}
A connection kernel on $[0,1]$ is called a \textit{graphon} in the graph-limit literature, see for example \citet{Borgs2008}. To reflect the interplay of graphs and point processes, we call our random graph model a \textit{graph process}.
\begin{definition} \label{DefGraphProcess}
    A random adjacency measure $\xi$ on $S$ is a graph process on $S$ if it admits a representation \eqref{EqRandomAdjacencyMeasure} in which, for some connection kernel $W$ on $S$,
    \begin{equation*}
        A_{ii} = 0, \qquad
        A_{ij}\mid \bigl( N,(S_k) \bigr) \;\overset{\textnormal{ind.}}{\sim}\; \mathrm{Bernoulli}\bigl(W(S_i,S_j)\bigr), \qquad 
        i < j,
    \end{equation*}
    the remaining entries being given by symmetry, $A_{ji}=A_{ij}$. We call \eqref{EqRandomAdjacencyMeasure} an i.i.d.\ representation of $\xi$ if, in addition, the $(S_k)$ are i.i.d.\ according to some probability measure $\hat\mu$ on $S$ and $N\indep(S_k)$.
\end{definition}
We then say that $\xi$ is \textit{connected} by $W$. Unlike the directing random measure $\nu$, the connecting kernel $W$ is not uniquely determined by $\xi$: several kernels may connect the same graph process. This non-uniqueness is revisited in Section~\ref{SectionExchangeability}, where the local graphons associated with bounded restrictions are shown to be unique only up to measure-preserving transformations. Note also that every graph process has a simple associated graph, irrespective of the values of $W$ on the diagonal of $S^2$.

A point process $\nu$ on $S$ with $\nu \overset{d}{=} \sum_{i \le N} \delta_{S_i}$, for a random variable $N \in \nbb_0$ and an array $(S_i) \indep N$ of i.i.d.\ random elements of $S$ with common distribution $\hat{\mu}$, is a \textit{mixed binomial process} based on $N$ and $\hat{\mu}$; note that $N < \infty$ a.s., so such a process is a.s.\ finite. It is a \textit{Poisson process} based on $\mu \in \Mcal(S)$, the \textit{intensity measure} of $\nu$, if $\nu(B_1), \dots, \nu(B_n)$ are independent and Poisson distributed with means $\mu(B_1), \dots, \mu(B_n)$ for all disjoint $B_1, \dots, B_n \in \widehat{\Bcal}(S)$; when $\mu(S) = \infty$ this process has infinitely many atoms, though it remains locally finite. It is a \textit{mixed Poisson process} based on a random variable $\delta \ge 0$ and $\mu$ if, conditionally on $\delta$, it is Poisson based on $\delta\mu$. A graph process directed by a (mixed) Poisson or mixed binomial process is called a \textit{(mixed) Poisson} or \textit{mixed binomial graph process}, respectively. Finally, a mixed Poisson process based on $\mu$, respectively a mixed binomial process based on $\hat\mu$, is simple if and only if $\mu$, respectively $\hat\mu$, is diffuse \citep[Lemma~3.6]{Kallenberg2017}.

All measures in this paper are locally finite. When the latent space has infinite measure, we therefore work with bounded restrictions, on which the directing process is a.s.\ finite. Such restrictions remain graph processes; in fact this holds for every Borel set.
\begin{proposition} \label{PropRestrictionOfGraphProcess}
    Let $\xi$ be a graph process on $S$ and $B \in \Bcal(S)$. If $\xi$ is directed by $\nu$ and connected by $W$, then $\xi\vert_B$ is again a graph process, directed by $\nu\vert_B$ and connected by $W$.
\end{proposition}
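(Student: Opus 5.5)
The plan is to start from a representation \eqref{EqRandomAdjacencyMeasure} of $\xi$ witnessing that it is a graph process connected by $W$, namely $\xi = \sum_{i,j\le N}\delta_{(S_i,S_j,A_{ij})}$ a.s., and to build from it an explicit representation of $\xi\vert_B$. Restricting to $B\times B\times\{0,1\}$ gives
\[
\xi\vert_B \overset{a.s.}{=} \sum_{i,j\le N} \mathbf{1}\{S_i\in B\}\mathbf{1}\{S_j\in B\}\,\delta_{(S_i,S_j,A_{ij})},
\]
which is clearly an adjacency measure directed by $\nu\vert_B=\sum_{i\le N}\mathbf{1}\{S_i\in B\}\delta_{S_i}$. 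It is locally finite because it is dominated by $\xi$. It remains to re-index the atoms lying in $B$ by consecutive integers, and then to check that the Bernoulli structure survives.

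For the re-indexing, define $N_B\coloneqq \nu(B)=\sum_{i\le N}\mathbf{1}\{S_i\in B\}\in\overline{\nbb}_0$. Define random indices recursively by letting $\tau_1<\tau_2<\cdots$ be the successive indices $i\le N$ with $S_i\in B$. Each $\tau_k$ is a measurable function of $(N,(S_i))$, since $\{\tau_k=i\}$ is expressed through the events $\{S_j\in B\}$ for $j\le i$ and $\{i\le N\}$. Set $S^B_k\coloneqq S_{\tau_k}$ and $A^B_{kl}\coloneqq A_{\tau_k\tau_l}$ for $k,l\le N_B$. This gives a representation of the form \eqref{EqRandomAdjacencyMeasure} for $\xi\vert_B$ with a.s.\ distinct points, a symmetric array, and zero diagonal.

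The key step is to verify the conditional law of $(A^B_{kl})_{k<l}$ given $(N_B,(S^B_k))$. I would first condition on the larger $\sigma$-field $\mathcal{G}\coloneqq\sigma(N,(S_i))$. Both $N_B$ and $(\tau_k)$ are $\mathcal{G}$-measurable. Given $\mathcal{G}$, the entries $(A_{ij})_{i<j}$ are independent Bernoulli$(W(S_i,S_j))$. Hence, on each event $\{\tau_k=i_k \text{ for all } k\}$, the selected entries $A_{i_ki_l}$ with $k<l$ correspond to distinct pairs $i_k<i_l$. They are therefore conditionally independent Bernoulli$(W(S^B_k,S^B_l))$. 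Concretely, for finitely many pairs and values $a_{kl}$, one obtains
\[
\pbb\bigl(A^B_{kl}=a_{kl}\ \forall (k,l)\mid\mathcal{G}\bigr)=\prod W(S^B_k,S^B_l)^{a_{kl}}\bigl(1-W(S^B_k,S^B_l)\bigr)^{1-a_{kl}}
\]
on $\{N_B\ge \max l\}$.

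The right-hand side is measurable with respect to the smaller $\sigma$-field $\sigma(N_B,(S^B_k))\subseteq\mathcal{G}$. The tower property therefore yields the same identity conditionally on $(N_B,(S^B_k))$. This is exactly the requirement of Definition~\ref{DefGraphProcess} for $\xi\vert_B$ with kernel $W$.

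The main obstacle is bookkeeping rather than substance. First, the indices $\tau_k$ are random, so one must decompose over the countably many values of $(\tau_k)$ to apply conditional independence legitimately. Second, the case $N_B=\infty$, which is possible for unbounded $B$, must be handled. Both are resolved by working with finite-dimensional cylinder events and summing over index configurations.
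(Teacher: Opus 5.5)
Your proposal is correct and follows the paper's route: restrict the representation to the indices $\{i \le N : S_i \in B\}$, note that this index set is a measurable function of $(N,(S_i))$, and use the conditional independence of the marks given $(N,(S_i))$. The only difference is that you spell out two steps the paper's short proof leaves implicit: the re-enumeration via $\tau_k$, and the tower-property passage to the coarser $\sigma$-field $\sigma(N_B,(S^B_k))$ that Definition~\ref{DefGraphProcess} actually requires.
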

\begin{proof}
    Let $\xi$ be represented by \eqref{EqRandomAdjacencyMeasure}. Restriction to $B$ retains exactly the atoms indexed by $I_B \coloneqq \{\, i \le N : S_i \in B \,\}$, so $\xi\vert_B$ is a random adjacency measure directed by $\nu\vert_B = \sum_{i \in I_B} \delta_{S_i}$. Since $I_B$ is a deterministic function of $N$ and $(S_i)$, conditioning on $N$ and $(S_i)$ leaves the retained off-diagonal marks $(A_{ij})_{i,j \in I_B, i<j}$ independent with $A_{ij} \sim \operatorname{Bernoulli}(W(S_i,S_j))$, while $A_{ii}=0$; hence $\xi\vert_B$ is connected by $W$.
\end{proof}

In Section~\ref{SectionExchangeability} we introduce a local notion of exchangeability, resting on graph processes whose vertices are locally i.i.d.\ samples from a probability measure. It is motivated by the following two lemmas.
\begin{lemma}[{\citealp[Theorem~3.4]{Kallenberg2017}}] \label{LemmaRestrictionMixedPoisson}
    Let $\nu$ be a mixed Poisson or binomial process on $S$ based on $\mu$, and fix any $B \in \widehat{\Bcal}(S)$. Then $\nu\vert_B$ is a mixed binomial process based on $\mu\vert_B$.
\end{lemma}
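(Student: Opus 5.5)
We plan to prove Lemma~\ref{LemmaRestrictionMixedPoisson} by computing Laplace functionals. A point process is determined in distribution by its Laplace functional $\ebb\exp(-\nu f)$ for measurable $f\ge 0$, so it suffices to check that the Laplace functional of $\nu\vert_B$ has the form characterising a mixed binomial process. Fix $B\in\widehat{\Bcal}(S)$. Local finiteness gives $\mu(B)<\infty$. If $\mu(B)=0$, then $\nu\vert_B=0$ a.s. in both cases; this is trivially mixed binomial with $N=0$. We therefore assume $0<\mu(B)<\infty$ and set $\hat\mu_B\coloneqq\mu\vert_B/\mu(B)$.

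\emph{Mixed Poisson case.} Conditionally on $\delta$, the process $\nu$ is Poisson with intensity $\delta\mu$. Hence $\nu\vert_B$ is conditionally Poisson with intensity $\delta\mu\vert_B$, since Poisson processes restrict to Poisson processes: independence over disjoint bounded sets and the Poisson marginals are inherited directly. For $f\ge 0$ we then have
\[
\ebb\bigl[e^{-\nu\vert_B f}\mid\delta\bigr]=\exp\Bigl(-\delta\mu(B)\bigl(1-\textstyle\int e^{-f}\,d\hat\mu_B\bigr)\Bigr)=\ebb\Bigl[\bigl(\textstyle\int e^{-f}\,d\hat\mu_B\bigr)^{N}\,\Big|\,\delta\Bigr],
\]
where, conditionally on $\delta$, $N\sim\mathrm{Poisson}(\delta\mu(B))$. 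The right-hand side is the conditional Laplace functional of $\sum_{i\le N}\delta_{S_i}$ with $(S_i)$ i.i.d.\ $\hat\mu_B$ and independent of $N$. Taking expectations over $\delta$ shows that $\nu\vert_B$ is mixed binomial based on $N$ and $\hat\mu_B$, where $N$ is a mixed Poisson variable. This $N$ is a.s.\ finite because $\delta<\infty$.

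\emph{Mixed binomial case.} Write $\nu=\sum_{i\le M}\delta_{T_i}$ with $(T_i)$ i.i.d.\ $\hat\mu$ and independent of $M$, and take $\mu$ proportional to $\hat\mu$. Conditionally on $M$, each point falls in $B$ independently with probability $p=\hat\mu(B)$. By independence, the retained points are i.i.d.\ $\hat\mu_B$, and their number $N\sim\mathrm{Bin}(M,p)$ is independent of their locations. The same computation makes this precise:
\[
\ebb\bigl[e^{-\nu\vert_Bf}\mid M\bigr]=\bigl(1-p+p\textstyle\int e^{-f}d\hat\mu_B\bigr)^M,
\]
and this equals the probability generating function of $\mathrm{Bin}(M,p)$ evaluated at $\int e^{-f}d\hat\mu_B$. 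This again gives the mixed binomial form.

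The main obstacle is not computational but the identification step. We need that the Laplace functional $\ebb\,\phi(N)$, with $\phi(s)=s^{N}$ evaluated at $\int e^{-f}d\hat\mu_B$, determines a mixed binomial law. Concretely, the distribution of $\nu\vert_B$ coincides with that of a constructed process $\sum_{i\le N}\delta_{S_i}$, and the construction itself is available on the rich probability space via \citet[Corollary~8.18]{Kallenberg2021}. For the mixed Poisson case we also need a.s.\ finiteness of $N$, which holds since $\mu(B)<\infty$.
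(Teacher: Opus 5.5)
Your argument is correct. The paper gives no proof of its own here: it imports the result from \citet[Theorem~3.4]{Kallenberg2017}. Your Laplace-functional computation is therefore a self-contained substitute for a citation rather than a different route through a proof in the paper.

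In both cases you establish
\[
\ebb\, e^{-\nu\vert_B f}=\ebb\Bigl[\bigl(\textstyle\int e^{-f}\,d\hat\mu_B\bigr)^{N}\Bigr],
\]
where $N$ is mixed Poisson with random mean $\delta\mu(B)$, or is $\mathrm{Bin}(M,\hat\mu(B))$ given $M$. Conditioning on $N$ shows this is the Laplace functional of $\sum_{i\le N}\delta_{S_i}$ with $(S_i)$ i.i.d.\ $\hat\mu_B$ and independent of $N$. Uniqueness of Laplace functionals then finishes the proof. As a bonus, your argument makes the law of the count $N=\nu(B)$ explicit, which the bare statement of the lemma does not.

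Two small remarks. First, the ``identification step'' you flag is not really an obstacle. The paper defines a mixed binomial process only up to equality in distribution, so no almost-sure realisation via \citet[Corollary~8.18]{Kallenberg2021} is needed. Second, your separate treatment of $\mu(B)=0$ is appropriate. In that case the normalised base $\mu\vert_B/\mu(B)$ in the statement is undefined, and the claim holds only in the degenerate sense $\nu\vert_B=0$ a.s.
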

In a slight abuse of notation, the base $\mu\vert_B$ is understood to be normalised to a probability measure.
\begin{lemma}[{\citealp[Theorem~3.7]{Kallenberg2017}}] \label{LemmaExtensionMixedBinomial}
    Let $\nu$ be a point process on $S$, and fix any $B_1, B_2, \dots \in \widehat{\Bcal}(S)$ with $B_n \uparrow S$. Then $\nu$ is a mixed Poisson or binomial process if and only if $\nu\vert_{B_n}$ is a mixed binomial process for every $n \in \nbb$.
\end{lemma}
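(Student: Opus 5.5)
The statement to prove is Lemma~\ref{LemmaExtensionMixedBinomial}. The paper attributes it to \citet[Theorem~3.7]{Kallenberg2017}, so the shortest route is to cite it. Below is how I would argue it directly, using Lemma~\ref{LemmaRestrictionMixedPoisson} for the forward direction and a consistency-plus-de~Finetti argument for the converse.

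\emph{Forward direction.} Suppose $\nu$ is mixed Poisson or mixed binomial based on $\mu$. Each $B_n$ is bounded, so Lemma~\ref{LemmaRestrictionMixedPoisson} applies directly. It shows that every $\nu\vert_{B_n}$ is mixed binomial based on the normalised $\mu\vert_{B_n}$, which settles this direction.

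\emph{Converse: consistent bases.} Suppose each $\nu\vert_{B_n}$ is mixed binomial, based on $N_n$ and a probability measure $\hat\mu_n$. I would proceed in three steps.

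\begin{enumerate}[label=(\arabic*)]
\item Take $m>n$. Restricting a mixed binomial process on $B_m$ to $B_n$ gives a mixed binomial process based on $\hat\mu_m\vert_{B_n}$ (normalised), by thinning i.i.d.\ points. So $\nu\vert_{B_n}$ has two representations: with base $\hat\mu_n$ and with base the normalised $\hat\mu_m\vert_{B_n}$.
\item The base of a mixed binomial process is essentially unique on $\{N\ge1\}$. It is recovered from the normalised intensity, or from $\ebb[\nu(\cdot)\mid\nu(B_n)=k]/k$ for any $k$ with positive probability. Hence $\hat\mu_n \propto \hat\mu_m\vert_{B_n}$ whenever $\nu(B_n)$ is not a.s.\ zero. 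The degenerate sets $B_n$ on which $\nu$ vanishes a.s.\ need separate handling: either $\nu=0$ a.s., which is trivially Poisson with $\delta=0$, or they are eventually irrelevant.
\item These proportionality relations glue into a single locally finite measure $\mu$ on $S$ with $\mu\vert_{B_n}\propto\hat\mu_n$. Normalise it, for instance, by $\mu(B_{n_0})=1$ for the first $B_{n_0}$ that is not a.s.\ empty. Then every $\nu\vert_{B_n}$ is mixed binomial based on $\mu\vert_{B_n}$.
\end{enumerate}

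\emph{Converse: the mixing law.} This is the main obstacle: identifying the law of the total mass. Write $\kappa_n=\nu(B_n)$ and $r_{n}=\mu(B_n)/\mu(B_{n+1})$. Consistency forces $\kappa_n\mid\kappa_{n+1}\sim\mathrm{Binomial}(\kappa_{n+1},r_n)$.

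\begin{enumerate}[label=(\arabic*),resume]
\item If $\mu(S)<\infty$, the $\kappa_n$ increase to $\kappa=\nu(S)$, which is finite a.s.\ by local finiteness, since $S$ is then reached in the limit. Conditionally on $\kappa$, the points are i.i.d.\ $\hat\mu$, because the restrictions are consistent and $\sigma$-fields increase. Thus $\nu$ is mixed binomial.
\item If $\mu(S)=\infty$, I would use that the sequence is a reverse binomial thinning with $\mu(B_n)\to\infty$. Equivalently, the generating functions satisfy $\ebb s^{\kappa_n}=\ebb(1-p+ps)^{\kappa_m}$, with $p=\mu(B_n)/\mu(B_m)$.
\item Then $\kappa_m/\mu(B_m)$ is a nonnegative reverse-martingale-type sequence, and I would show it converges in distribution to some $\delta\ge0$. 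Letting $m\to\infty$ gives
\[
\ebb s^{\kappa_n}=\lim_{m\to\infty}\ebb\bigl(1-(1-s)\mu(B_n)/\mu(B_m)\bigr)^{\kappa_m}=\ebb e^{-(1-s)\delta\mu(B_n)},
\]
which is a mixed Poisson law. Tightness holds because $\ebb s^{\kappa_n}$ is fixed and $(1-x/m')^{\cdot}$ approximates $e^{-x\,\cdot/m'}$ uniformly; this is a Hausdorff/Bernstein-type moment argument.
\end{enumerate}

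\emph{Conclusion.} Together with conditional i.i.d.\ placement, the mixed Poisson law of each $\kappa_n$ shows that every $\nu\vert_{B_n}$ has the law of the restriction of a mixed Poisson process based on $\mu$ with mixing variable $\delta$. Laws of point processes are determined by their restrictions to $B_n\uparrow S$, so $\nu$ itself is mixed Poisson.
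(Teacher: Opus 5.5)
The paper gives no proof of this lemma; it imports it from Kallenberg's Theorem~3.7. Your argument is therefore an independent reconstruction, and its route is sound. Necessity comes from Lemma~\ref{LemmaRestrictionMixedPoisson}. For sufficiency you glue the bases into one measure $\mu$ using uniqueness of the base on $\{N\ge1\}$. In the unbounded case you then identify the count law from $\ebb s^{\kappa_n}=\ebb(1-p_m(1-s))^{\kappa_m}$ with $p_m=\mu(B_n)/\mu(B_m)\to0$.

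The analytic core checks out. Since $(1-px)^k\le e^{-pkx}$, setting $\rho_m=p_m\kappa_m$ gives $\ebb e^{-x\rho_m}\ge\ebb(1-x)^{\kappa_n}$, which yields tightness uniformly in $m$. Moreover $0\le e^{-pkx}-(1-px)^k\le px\,e^{px}/(2e)$ uniformly in $k$, so any subsequential limit $\delta$ of $\kappa_m/\mu(B_m)$ gives the mixed Poisson identity. Full convergence in distribution is not needed. The reverse-martingale remark is also unnecessary, and as stated it presupposes integrability of $\kappa_n$, which fails when the mixing law has infinite mean. Citing buys the authors brevity; your argument exposes the two mechanisms that drive the result: consistency of the bases, and a Poisson limit of binomial thinnings.

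Two justifications need repair.

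First, in step (4), finiteness of $\kappa=\nu(S)$ does not follow from local finiteness, because $S$ need not be bounded; for example, atoms at the integers of $\rbb$ form a locally finite process with infinitely many points. Use the thinning instead. Fix $n$ with $\mu(B_n)>0$. Then $\kappa_n\mid\kappa_m\sim\mathrm{Binomial}(\kappa_m,p)$ with $p\ge\mu(B_n)/\mu(S)>0$ for every $m$. So $\pbb(\kappa=\infty)>0$ would force $\pbb(\kappa_n=\infty)>0$, a contradiction.

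Second, in step (3), $\mu(B_n)<\infty$ for all $n$ gives only $\sigma$-finiteness, not local finiteness, because a bounded set need not lie inside any $B_n$. For example, take $B_n=[-n,n]\setminus(0,1/n)$ in $\rbb$ and $\mu(dx)=x^{-2}dx$ on $(0,1)$. Local finiteness has to be derived afterwards. Suppose $\mu(C)=\infty$ for some bounded $C$. Then $\nu(C\cap B_n)$ is mixed Poisson with mean $\delta\mu(C\cap B_n)\to\infty$, so $\nu(C)=\infty$ on $\{\delta>0\}$. Local finiteness of $\nu$ then forces $\delta=0$ a.s., i.e.\ $\nu=0$, which is trivially of the required form.

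With these two fixes your proof is complete.
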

These let us establish the local existence of i.i.d.\ representations for mixed Poisson and mixed binomial graph processes.

\begin{proposition} \label{PropLocalIidRep}
    The $B$-restriction of a graph process $\xi$ on $S$ admits an i.i.d.\ representation for every $B \in \widehat{\Bcal}(S)$ if and only if $\xi$ is directed by a mixed Poisson or mixed binomial point process.
\end{proposition}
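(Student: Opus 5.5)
The plan is to reduce the statement to Lemmas~\ref{LemmaRestrictionMixedPoisson} and~\ref{LemmaExtensionMixedBinomial}, using Proposition~\ref{PropRestrictionOfGraphProcess} to pass between $\xi$ and its restrictions. The key observation is that, for a graph process, the existence of an i.i.d.\ representation is purely a property of the directing point process: the edge marks are always conditionally independent Bernoulli$(W(S_i,S_j))$ given $N$ and $(S_k)$, whatever enumeration is used.

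\emph{Sufficiency.} Suppose $\nu$ is mixed Poisson or mixed binomial based on $\mu$, and fix $B\in\widehat{\Bcal}(S)$. By Proposition~\ref{PropRestrictionOfGraphProcess}, $\xi\vert_B$ is a graph process directed by $\nu\vert_B$ and connected by $W$. By Lemma~\ref{LemmaRestrictionMixedPoisson}, $\nu\vert_B$ is mixed binomial, so $\nu\vert_B\overset{d}{=}\sum_{i\le N'}\delta_{S_i'}$ with $N'\indep(S_i')$ and $(S_i')$ i.i.d.\ with law $\hat\mu=\mu\vert_B/\mu(B)$.

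The step needing care is realising this on $\Omega$ so that it matches $\xi\vert_B$ almost surely. I would build a new pair $(\nu',\xi')$ from scratch: take $(N',(S_i'))$ as above, draw $A'_{ij}=\mathbf 1\{U_{ij}\le W(S_i',S_j')\}$ for $i<j$, set $A'_{ii}=0$, and let $\xi'$ be the resulting sum \eqref{EqRandomAdjacencyMeasure}.

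Next I would show $\xi'\overset{d}{=}\xi\vert_B$. Both are obtained from their directing processes by the same probability kernel: conditionally on the atom configuration, the marks are independent Bernoulli with parameters $W$ evaluated at the atoms. This kernel is invariant under relabelling the atoms, since $W$ is symmetric and the marks on distinct pairs are independent. Its law therefore depends only on the point measure, not on the enumeration, and equal directing laws give equal joint laws. Then \citet[Corollary~8.18]{Kallenberg2021} (transfer) yields a copy of $(N',(S'_i),(A'_{ij}))$ on $\Omega$ whose sum equals $\xi\vert_B$ almost surely. It remains to check that the transferred marks are still conditionally Bernoulli given the transferred $(N',S')$; this holds because the joint law is preserved. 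If $\mu(B)=0$, the restriction is empty and trivially admits such a representation.

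\emph{Necessity.} Suppose every $\xi\vert_B$ with $B\in\widehat{\Bcal}(S)$ has an i.i.d.\ representation. Its directing measure $\nu\vert_B=\sum_{i\le N}\delta_{S_i}$ then has $(S_i)$ i.i.d.\ and $N\indep(S_i)$; local finiteness forces $N<\infty$ a.s. Hence $\nu\vert_B$ is a mixed binomial process. Choose bounded $B_n\uparrow S$, for instance balls of radius $n$ about a fixed point. Lemma~\ref{LemmaExtensionMixedBinomial} then shows that $\nu$ is mixed Poisson or mixed binomial.

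The main obstacle I expect is the distributional-to-almost-sure transfer and the enumeration-invariance of the mark kernel in the sufficiency direction; the rest is direct citation.
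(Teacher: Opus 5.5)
Your proposal is correct and follows essentially the same route as the paper. Necessity comes from observing that each bounded restriction of $\nu$ is mixed binomial and invoking Lemma~\ref{LemmaExtensionMixedBinomial}. Sufficiency uses Proposition~\ref{PropRestrictionOfGraphProcess} and Lemma~\ref{LemmaRestrictionMixedPoisson}, re-draws the marks as $\mathbf 1\{U_{ij}\le W(S_i,S_j)\}$, and transfers the resulting distributional identity to an almost sure one.

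Your remark that the Bernoulli mark kernel is invariant under relabelling of the atoms usefully spells out a step the paper justifies only by appeal to Definition~\ref{DefGraphProcess}: it shows that the law of the restriction depends only on the law of $\nu\vert_B$. Your separate handling of the case $\mu(B)=0$ also covers a degenerate case the paper leaves implicit.
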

\begin{proof}
    Let $\xi$ be directed by $\nu$; by Proposition~\ref{PropRestrictionOfGraphProcess}, $\xi\vert_B$ is directed by $\nu\vert_B$. Suppose $\xi\vert_B$ admits an i.i.d.\ representation $\xi\vert_B \overset{a.s.}{=} \sum_{i,j \le N} \delta_{(S_i,S_j,A_{ij})}$. Since $B$ is bounded, $N = \nu(B) < \infty$ a.s., and by Definition~\ref{DefGraphProcess} the locations $(S_i)$ are i.i.d.\ with $N \indep (S_i)$; hence $\nu\vert_B \overset{a.s.}{=} \sum_{i \le N} \delta_{S_i}$ is a mixed binomial process. Fix bounded sets $B_n \uparrow S$; the same argument applied to each $B_n$ shows that $\nu\vert_{B_n}$ is mixed binomial for every $n$, so by Lemma~\ref{LemmaExtensionMixedBinomial} $\nu$ is a mixed Poisson or mixed binomial process.

    Conversely, suppose $\nu$ is mixed Poisson or mixed binomial. By Proposition~\ref{PropRestrictionOfGraphProcess} and Lemma~\ref{LemmaRestrictionMixedPoisson}, $\xi\vert_B$ is directed by the mixed binomial process $\nu\vert_B \overset{a.s.}{=} \sum_{i \le N} \delta_{S_i}$, with $(S_i) \indep N$ i.i.d.\ from some probability law on $B$. Since $\xi$ is a graph process it is connected by some kernel $W$, and by Proposition~\ref{PropRestrictionOfGraphProcess} so is $\xi\vert_B$. 
    Define 
    \[
        \widetilde A_{ii} \coloneqq 0, \qquad
        \widetilde A_{ij} \coloneqq \mathbf 1\{U_{ij}\leq W(S_i,S_j)\}, \qquad 
        i<j,
    \]
    the remaining entries being given by symmetry, $\widetilde A_{ji} = \widetilde A_{ij}$, and set $\widetilde\xi\coloneqq\sum_{i,j\leq N}\delta_{(S_i,S_j,\widetilde A_{ij})}$. Then $\widetilde{\xi}$ is a graph process with an i.i.d.\ representation, and $\widetilde{\xi} \overset{d}{=} \xi\vert_B$ by Definition~\ref{DefGraphProcess}. By the remark at the beginning of the section, this distributional identity may be realised almost surely, giving an i.i.d.\ representation of $\xi\vert_B$.
\end{proof}
A local i.i.d.\ representation does not entail a global one. Indeed, let $\xi$ be a Poisson graph process on $S$ directed by $\nu$ with intensity measure $\mu$ satisfying $\mu(S) = \infty$. A global i.i.d.\ representation would make $\nu$ a mixed binomial process, hence a.s.\ finite; but a Poisson process with intensity measure $\mu$ satisfying $\mu(S) = \infty$ has infinitely many atoms almost surely, a contradiction. A unit-rate Poisson graph process on $\mathbb{R}$ is one such example.

\subsection{Comparison to graphon models}
\label{SubsectionGraphonComp}

A random matrix $(X_{ij})_{i,j \ge 1}$ is \textit{jointly exchangeable} if, for all $n \ge 1$ and all permutations $\pi$ of $\{1, \dots, n\}$,
\begin{equation} \label{EqExchangeableArray}
    (X_{ij})_{i,j \le n} \overset{d}{=} (X_{\pi(i)\pi(j)})_{i,j \le n}.
\end{equation}
This symmetry underpins an array analogue of de Finetti's theorem, the Aldous--Hoover--Kallenberg theorem, due to \citet{Hoover1979, Aldous1981, Kallenberg1989}, which provides a functional representation of such arrays.

A random graph is \textit{(jointly) exchangeable} if it admits a jointly exchangeable adjacency matrix. Recall that the adjacency matrix of a random graph is a symmetric $\{0,1\}$-valued random matrix with a vanishing diagonal. The Aldous--Hoover--Kallenberg theorem then asserts that an infinite jointly exchangeable adjacency matrix $A = (A_{ij})_{i,j \in \nbb}$ admits a representation
\begin{equation} \label{EqAldousHooverKallenbergRepresentation}
    A_{ij} \overset{a.s.}{=} \mathbf{1}\{U_{ij} \le w(U_\emptyset, U_i, U_j)\}, \qquad i,j \in \nbb,
\end{equation}
where $w : [0,1]^3 \to [0,1]$ is measurable and symmetric in the second and third argument. Because $A$ has a vanishing diagonal, the representing function may, and henceforth will, be chosen so that
\begin{equation*}
    w(u_0, u, u) = 0 \qquad \text{for all } u_0, u \in [0,1].
\end{equation*}
Conversely, every array with a representation \eqref{EqAldousHooverKallenbergRepresentation} involving such a zero-diagonal representative is a jointly exchangeable adjacency matrix. The array is \textit{ergodic} when $w$ does not depend on $U_\emptyset$, in which case \eqref{EqAldousHooverKallenbergRepresentation} reduces to 
\begin{equation} \label{EqAldousHooverKallenbergRepErgodic}
    A_{ij} \overset{a.s.}{=} \mathbf{1}\{U_{ij} \le W(U_i, U_j)\}, \qquad i,j \in \nbb
\end{equation}
for a graphon $W : [0,1]^2 \to [0,1]$, chosen so that $W(u,u) = 0$ for every $u \in [0,1]$. The general case is a mixture of such ergodic laws.

For the zero-diagonal representative $W$ fixed above, the ergodic case \eqref{EqAldousHooverKallenbergRepErgodic} admits the following sampling description. Sample i.i.d.\ random variables $U_1, U_2, \dots \sim \mathrm{U}[0,1]$. Set $A_{ii}=0$ and draw
\[
    A_{ij} \mid (U_k) \;\overset{\textnormal{ind.}}{\sim}\; \mathrm{Bernoulli}\bigl(W(U_i,U_j)\bigr), \qquad 
    i < j,
\]
the remaining entries being given by symmetry, $A_{ji}=A_{ij}$. The sampled $A$ then has the law of the array in \eqref{EqAldousHooverKallenbergRepErgodic}. We call the associated random graph with vertex set $\nbb$ the \textit{graphon model} based on $W$. We obtain the \textit{finite} graphon model on $n \in \nbb$ vertices based on $W$ by restricting $A$ to its leading principal $n \times n$ submatrix.

The graphon $W$ in \eqref{EqAldousHooverKallenbergRepErgodic} is not uniquely determined by the law of $A$: for example, replacing $W$ by $W \circ (T \otimes T)$ for any measure-preserving transformation $T$ of $[0,1]$ leaves the law of $A$ unchanged. This non-uniqueness is revisited in Section~\ref{SectionExchangeability}.

The following example frames the graphon model as a graph process. Note that only the finite graphon model defines a graph process, for otherwise the directing measure would a.s.\ have infinitely many atoms in $[0,1]$, violating local finiteness. 
\begin{example}
    Let $W : [0,1]^2 \to [0,1]$ be a graphon such that $W(u,u) = 0$ for every $u \in [0,1]$. Sample i.i.d.\ random variables $U_1, U_2,\dots \sim \mathrm{U}[0,1]$. Set $A_{ii} = 0$ and draw
    \begin{equation*}
        A_{ij} \mid (U_k) \;\overset{\textnormal{ind.}}{\sim}\; \mathrm{Bernoulli}\bigl(W(U_i,U_j)\bigr), \qquad 
        i < j,
    \end{equation*}
    the remaining entries being given by symmetry, $A_{ji} = A_{ij}$. For $n \in \nbb$, define a graph process on $[0,1]$ by
    \begin{equation*}
        \xi \coloneqq \sum_{i,j \leq n} \delta_{(U_i,\, U_j,\, A_{ij})}.
    \end{equation*}
    Indeed, this is an i.i.d.\ representation by Definition~\ref{DefGraphProcess}. Its adjacency matrix $(A_{ij})_{i,j \le n}$ is jointly exchangeable and realises the finite graphon model on $n$ vertices based on $W$.
\end{example}
By \citet{OrbanzRoy2015}, an infinite exchangeable graph is almost surely either empty or dense, where \textit{dense} means that the number of edges $e_n$ among the first $n$ vertices satisfies $e_n \ge c\,n^2$ for some constant $c > 0$ independent of $n$. By contrast, we later show that graph processes can produce \textit{sparse} graphs, with $e_n = O(n)$.

\subsection{Comparison to graphex models} \label{SubsectionGraphexComparison}

The \textit{graphex model}, introduced by \citet{CaronFox2017} and \citet{VeitchRoy2015} building on the theory of exchangeable random measures on $\rbb^2_+$ developed by \citet{Kallenberg1990}, generalises the graphon framework to produce sparse infinite graphs. Following \citet{VeitchRoy2015}, the distribution of such a graph is characterised by a \textit{graphex}, a triple $(I, S, W)$ consisting of a nonnegative real $I \in \rbb_+$, an integrable function $S : \rbb_+ \to \rbb_+$, and a symmetric measurable function $W : \rbb_+^2 \to [0,1]$ satisfying weak integrability conditions; these govern, respectively, the isolated edges, the star components, and the principal connection structure of the graph. As \citet{VeitchRoy2015} do, and as later emphasised by \citet{Janson2022}, we restrict to the case $I = S = 0$, in which the graph is determined by $W$ alone; we then refer to $W$ itself as the graphex. In this construction, a unit-rate Poisson process $\Pi = \{(\theta_i, \vartheta_i)\}_{i \ge 1}$ on $\rbb^2_+$ provides the vertices, where $\theta_i \in \rbb_+$ is a time stamp and $\vartheta_i \in \rbb_+$ a latent connectivity parameter. Conditionally on $\Pi$, each pair of distinct vertices is connected independently, i.e.
\begin{equation}
    Z_{ii} = 0, \qquad
    Z_{ij} \mid \Pi \;\overset{\mathrm{ind.}}{\sim}\; \mathrm{Bernoulli}\bigl(W(\vartheta_i, \vartheta_j)\bigr), \qquad i < j,
\end{equation}
the remaining entries being given by symmetry, $Z_{ji} = Z_{ij}$; the edge $\{\theta_i,\theta_j\}$ being present if and only if $Z_{ij} = 1$. The edge random measure $\sum_{Z_{ij}=1} \delta_{(\theta_i,\theta_j)}$ on $\rbb^2_+$ is jointly exchangeable in the sense of \citet{Kallenberg1990}, and is locally finite under the integrability conditions on $W$ established by \citet{VeitchRoy2015}. We emphasise that the adjacency matrix $Z = (Z_{ij})_{i,j \in \nbb}$ need not be jointly exchangeable as an infinite array.

Throughout this subsection we show that graph processes are in general distinct from graphex models, additionally assuming that $S$ is uncountable and endowed with a $\sigma$-finite diffuse Borel measure $\mu$ such that $\mu(S) > 0$. For each $d\in\nbb$, we write $\lambda_d$ for Lebesgue measure on $\rbb^d$, and abbreviate $\lambda \coloneqq \lambda_1$. In particular, $\lambda_2=\lambda^{\otimes2}$.

\begin{definition}
    A measurable map $T : S \to S$ is a \textit{$\mu$-preserving transformation} of $S$ if its pushforward $T\mu \coloneqq \mu \circ T^{-1}$ equals $\mu$, that is, $\mu(T^{-1}(B)) = \mu(B)$ for all $B \in \Bcal(S)$.
\end{definition}

\begin{definition} \label{DefExchangeRandMeasure}
    A random measure $\eta$ on $S^2$ is \textit{jointly $\mu$-exchangeable} if $(T \otimes T)\,\eta \overset{d}{=} \eta$ for every $\mu$-preserving transformation $T$ of $S$, where $T \otimes T : S^2 \to S^2$, $(x,y) \mapsto (Tx, Ty)$, and $(T \otimes T)\eta$ denotes the pushforward of $\eta$.
\end{definition}

Recall that a measurable (with respect to the Borel $\sigma$-algebra) bijection from $S$ to some other topological space $\tilde{S}$ with measurable inverse is called a \textit{Borel isomorphism}. The next result is known, but we could not locate a precise statement in the literature.
\begin{proposition} \label{PropExchangeRandMeasure}
    Let $I \coloneqq [0, \mu(S))$, with the convention $I = \rbb_+$ when $\mu(S) = \infty$. Then there is a Borel isomorphism $\phi : S \to I$ with $\phi\mu = \lambda\vert_I$, and for any such $\phi$, a random measure $\eta$ on $S^2$ is jointly $\mu$-exchangeable if and only if $(\phi \otimes \phi)\eta$ on $I^2$ is jointly $\lambda\vert_I$-exchangeable.
\end{proposition}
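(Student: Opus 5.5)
Two claims need proof: the existence of a Borel isomorphism $\phi : S\to I$ with $\phi\mu=\lambda\vert_I$, and the fact that joint exchangeability transfers along any such $\phi$. Both rest on the measure isomorphism theorem for standard Borel spaces.

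\textbf{Existence of $\phi$.} Since $S$ is Polish and uncountable, it is a standard Borel space Borel isomorphic to $[0,1]$. Because $\mu$ is $\sigma$-finite and diffuse with $\mu(S)>0$, I would proceed as follows.

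\begin{enumerate}[label=(\alph*)]
    \item When $\mu(S)<\infty$, the normalised measure $\mu/\mu(S)$ is a diffuse Borel probability measure on a standard Borel space. By the measure isomorphism theorem (Kechris, Theorem~17.41), there is a Borel isomorphism onto $[0,1]$ carrying it to $\lambda\vert_{[0,1]}$. Rescaling by $\mu(S)$ gives a map onto $[0,\mu(S)]$, and a countable-set adjustment of the endpoint yields $I=[0,\mu(S))$.
    \item When $\mu(S)=\infty$, I would partition $S$ into disjoint Borel sets $S_k$ with $\mu(S_k)\in(0,\infty)$. To arrange this, I would take a $\sigma$-finite exhaustion and merge any null pieces. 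A diffuse null piece may be uncountable, so it must be absorbed into a neighbouring piece of positive measure rather than discarded.
    \item I would then map each $S_k$ onto $[a_k,a_{k+1})$, where $a_k=\sum_{j<k}\mu(S_j)$, using the finite case. Gluing these maps gives $\phi$. Here $a_k\uparrow\infty$ because $\mu(S)=\infty$.
\end{enumerate}

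The technical nuisance is that each $S_k$ must be uncountable for the finite-case theorem to apply. This holds automatically, since $\mu$ is diffuse and $\mu(S_k)>0$.

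\textbf{Transfer of exchangeability.} The key observation is a bijection between preserving maps. If $T$ is $\mu$-preserving on $S$, then $\widetilde T\coloneqq\phi\circ T\circ\phi^{-1}$ is measurable on $I$ and satisfies
\[
\widetilde T(\lambda\vert_I)=\phi T\phi^{-1}\phi\mu=\phi T\mu=\phi\mu=\lambda\vert_I .
\]
Conversely, if $\widetilde T$ is $\lambda\vert_I$-preserving, then $T\coloneqq\phi^{-1}\widetilde T\phi$ is $\mu$-preserving. So $T\mapsto\phi T\phi^{-1}$ is a bijection between the two classes of preserving maps.

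Pushforward is functorial, so
\[
(\widetilde T\otimes\widetilde T)(\phi\otimes\phi)\eta=(\phi\otimes\phi)(T\otimes T)\eta .
\]
Since $\phi\otimes\phi$ is a Borel isomorphism of $S^2$ onto $I^2$, pushforward by it is a measurable bijection on spaces of measures with measurable inverse. It therefore preserves and reflects equality in distribution. Hence $(T\otimes T)\eta\overset{d}{=}\eta$ if and only if $(\widetilde T\otimes\widetilde T)(\phi\otimes\phi)\eta\overset{d}{=}(\phi\otimes\phi)\eta$. Ranging over all $T$, or equivalently all $\widetilde T$, gives the equivalence.

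\textbf{Main obstacle.} The delicate point is the measurability of pushforward on the space of random measures. Local finiteness need not be preserved by $\phi$, since bounded sets are not mapped to bounded sets. I would handle this by treating $\eta$ as a random element of the space of all $\sigma$-finite (or all) measures with the $\sigma$-algebra generated by the evaluations $\eta(C)$. The identity $\bigl((\phi\otimes\phi)\eta\bigr)(C)=\eta\bigl((\phi\otimes\phi)^{-1}C\bigr)$ then shows that the evaluation $\sigma$-algebras correspond exactly. Consequently the equalities in distribution, which are determined by finite-dimensional distributions of evaluations, transfer in both directions.
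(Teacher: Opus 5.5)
Your proposal is correct and takes essentially the same route as the paper. The paper handles the finite case with Kechris's measure isomorphism theorem, the $\sigma$-finite case by gluing isomorphisms on the increments of an exhaustion $B_n\uparrow S$ with strictly increasing $\mu(B_n)$ (equivalent to your merged partition), and the transfer by conjugating preserving maps with $\phi$ and using functoriality of pushforward. Your endpoint adjustment and your use of the evaluation $\sigma$-algebra on all measures, needed because $\phi\otimes\phi$ need not preserve local finiteness, add rigour on points the paper passes over silently.
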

We use the following lemma of \citet[Theorem~17.41]{Kechris1995}.
\begin{lemma} \label{LemmaPolishProbSpace}
    If $\mu(S) = 1$, there exists a Borel isomorphism $\phi : S \to [0,1)$ such that $\phi\mu = \lambda\vert_{[0,1)}$.
\end{lemma}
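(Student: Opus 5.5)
The plan is to reduce to a diffuse probability measure on $[0,1]$, use its distribution function as an almost-isomorphism onto Lebesgue measure, and then repair the resulting map on null sets so that it becomes a genuine Borel bijection.

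\emph{Reduction to $[0,1]$.} Since $S$ is an uncountable Polish space, Kuratowski's isomorphism theorem gives a Borel isomorphism $\psi : S \to [0,1]$. The measure $\nu \coloneqq \psi\mu$ is a Borel probability measure on $[0,1]$, and it is diffuse because $\psi$ is a bijection. It therefore suffices to find a Borel isomorphism $\chi : [0,1] \to [0,1)$ with $\chi\nu = \lambda\vert_{[0,1)}$. We then set $\phi \coloneqq \chi \circ \psi$.

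\emph{Distribution function.} Let $F(x) \coloneqq \nu([0,x])$. Because $\nu$ is diffuse, $F$ is continuous and nondecreasing with $F(1) = 1$. The standard computation $\nu(F^{-1}([0,u])) = u$ shows that $F\nu = \lambda\vert_{[0,1]}$.

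The map $F$ fails to be injective only on its nondegenerate level sets. Each of these is a closed interval, of which there are at most countably many, and each is $\nu$-null since $F$ is constant on it. Let $N$ be the union of these intervals, a $\nu$-null Borel set. Let $D \coloneqq F(N) \cup \{1\}$, which is countable.

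On the Borel set $[0,1] \setminus (N \cup F^{-1}(D))$, the map $F$ is a continuous injection onto $[0,1] \setminus D$. By the Lusin--Souslin theorem it is a Borel isomorphism between these sets. It preserves measure because both complements are null: $\nu(F^{-1}(D)) = \lambda(D) = 0$.

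\emph{Repair on null sets.} This is the step I expect to be the main obstacle. The map $F$ only identifies conull sets, and the complements need not be isomorphic as they stand; for example, one may be countable while the other is uncountable.

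To fix this, choose an uncountable Lebesgue-null Borel set $C \subseteq [0,1) \setminus D$, such as the Cantor set with countably many points removed. Let $C' \coloneqq F^{-1}(C) \setminus N$. This is an uncountable Borel set, since $F$ is injective there, and it is $\nu$-null, since $\nu(C') \le \lambda(C) = 0$. Now define
\[
    Z \coloneqq N \cup F^{-1}(D) \cup C', \qquad Z' \coloneqq (D \cup C) \setminus \{1\}.
\]
Both are uncountable null Borel sets, so Kuratowski's theorem provides a Borel isomorphism $\beta : Z \to Z'$.

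Finally define $\chi \coloneqq F$ on $[0,1] \setminus Z$ and $\chi \coloneqq \beta$ on $Z$. Then $\chi$ is a Borel bijection $[0,1] \to [0,1)$ with Borel inverse. It agrees with $F$ off a $\nu$-null set and maps that null set onto a $\lambda$-null set, so $\chi\nu = \lambda\vert_{[0,1)}$. The one point needing care is that $[0,1] \setminus Z$ is mapped by $F$ bijectively onto $[0,1) \setminus Z'$, which follows because $F$ is a bijection from $[0,1] \setminus (N \cup F^{-1}(D))$ onto $[0,1] \setminus D$.
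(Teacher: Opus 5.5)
Your proof is correct. The paper gives no argument for this lemma; it simply invokes \citet[Theorem~17.41]{Kechris1995}, the isomorphism theorem for measures, which is stated there with target $[0,1]$ rather than $[0,1)$.

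Your route has three steps:
\begin{enumerate}[label=(\roman*)]
    \item transport $\mu$ to $[0,1]$ by Kuratowski's theorem;
    \item use the distribution function $F$, via Lusin--Souslin, as a Borel isomorphism between conull sets;
    \item enlarge both exceptional sets by matched uncountable null sets so that they become Borel isomorphic.
\end{enumerate}
This is essentially the standard proof of that theorem. What you gain is self-containedness. You also get the half-open target directly by putting $1$ into $D$, whereas the citation tacitly needs an extra (trivial) modification on a countable set to pass from $[0,1]$ to $[0,1)$.

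Two small remarks. First, $C'$ is uncountable because $F$ maps $C'$ onto $C$. This uses the surjectivity of $F$ onto $[0,1]\setminus D$ from your previous step, not injectivity alone. Second, the uncountability of $S$ need not be invoked separately: a diffuse probability measure cannot be carried by a countable set.
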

Now we can prove Proposition~\ref{PropExchangeRandMeasure}.
\begin{proof}[Proof of Proposition~\ref{PropExchangeRandMeasure}]
    If $\mu(S) < \infty$, normalising, applying Lemma~\ref{LemmaPolishProbSpace}, and rescaling gives the required Borel isomorphism, so assume $\mu(S) = \infty$. Set $B_0 \coloneqq \emptyset$ and $\beta_0 \coloneqq 0$. Choose $B_n \uparrow S$ in $\Bcal(S)$ with $\beta_n \coloneqq \mu(B_n)$ finite and strictly increasing, and set $D_n \coloneqq B_n \setminus B_{n-1}$. Each increment $D_n$ has mass $\beta_n - \beta_{n-1} > 0$; normalising, applying Lemma~\ref{LemmaPolishProbSpace}, and rescaling yields a Borel isomorphism $\phi_n : D_n \to [\beta_{n-1}, \beta_n)$ with $\phi_n(\mu\vert_{D_n}) = \lambda\vert_{[\beta_{n-1},\beta_n)}$. The increments $D_n$ partition $S$ and their images partition $I \coloneqq \rbb_+$, so the $\phi_n$ glue to a single Borel isomorphism $\phi : S \to I$ such that $\phi\vert_{D_n} = \phi_n$, with $\phi\mu = \lambda\vert_I$.

    It remains to show that $\eta$ is jointly $\mu$-exchangeable if and only if $(\phi \otimes \phi)\eta$ is jointly $\lambda\vert_I$-exchangeable. Suppose the former, and let $T : I \to I$ be $\lambda\vert_I$-preserving. Then $\phi^{-1} \circ T \circ \phi$ is $\mu$-preserving, since
    \[
        (\phi^{-1} \circ T \circ \phi)\mu
        = \phi^{-1}\bigl(T(\phi\mu)\bigr)
        = \phi^{-1}\bigl(T\lambda\vert_I\bigr)
        = \phi^{-1}\lambda\vert_I = \mu.
    \]
    By hypothesis, $\bigl((\phi^{-1}\!\circ T\circ\phi) \otimes (\phi^{-1}\!\circ T\circ\phi)\bigr)\eta \overset{d}{=} \eta$; pushing forward by $\phi \otimes \phi$ and using
    \[
        (\phi \otimes \phi)\circ
        \bigl((\phi^{-1}\!\circ T\circ\phi) \otimes (\phi^{-1}\!\circ T\circ\phi)\bigr)
        = (T \otimes T)\circ(\phi \otimes \phi),
    \]
    we obtain
    $(T \otimes T)(\phi \otimes \phi)\eta \overset{d}{=} (\phi \otimes \phi)\eta$. The
    converse is analogous, exchanging the roles of $\phi$ and $\phi^{-1}$.
\end{proof}

In the graphex literature, a random graph is identified with a random measure on $\rbb^2_+$ whose coordinates are vertex labels and whose point masses mark the edges, following \citet{CaronFox2017, VeitchRoy2015, Borgs2018}. Under the graphex construction this edge measure is jointly $\lambda$-exchangeable in the sense of Definition~\ref{DefExchangeRandMeasure} with $S = \rbb_+$ and $\mu = \lambda$, as established by \citet{Kallenberg1990, VeitchRoy2015}. In contrast to Definition~\ref{DefAdjMeasureOfSimpleGraph}, this encoding records only present edges, so isolated (degree-zero) vertices leave no trace. Adopting the same convention on a general latent space, we define the \textit{edge measure} of a random adjacency measure $\xi$ on $S$ by
\[
    E_{\xi}(C) \coloneqq \xi(C \times \{1\}), \quad C \in \Bcal(S)^{\otimes 2}.
\]
Under a measurable enumeration \eqref{EqRandomAdjacencyMeasure}, this random measure on $S^2$ has the representation
\begin{equation} \label{EqRandomAdjacencyMeasure2}
    E_{\xi} \overset{a.s.}{=} \sum_{i,j \le N} A_{ij}\, \delta_{(S_i, S_j)}.
\end{equation}
Abusing notation, we write $\xi$ for $E_{\xi}$ when the intended interpretation is clear from the context.

For a graph process in particular, the edge measure is not in general jointly $\mu$-exchangeable, unlike the graphex edge measure, as Theorem~\ref{ThmExchangeableGraphProcessIsTrivial} makes precise.
\begin{theorem} \label{ThmExchangeableGraphProcessIsTrivial}
    Let $S$ be uncountable, and let $\xi$ be the edge measure \eqref{EqRandomAdjacencyMeasure2} of a Poisson graph process on $S$ with diffuse $\sigma$-finite intensity measure $\mu$, connected by a kernel $W$. Then $\xi$ is jointly $\mu$-exchangeable if and only if $W$ is $\mu^{\otimes 2}$-a.e.\ constant.
\end{theorem}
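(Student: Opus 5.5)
Via Proposition~\ref{PropExchangeRandMeasure} I would transport everything to $I=[0,\mu(S))$ with Lebesgue measure. Let $\phi:S\to I$ be the Borel isomorphism with $\phi\mu=\lambda\vert_I$. The pushforward $(\phi\otimes\phi)\xi$ is again the edge measure of a Poisson graph process. Its intensity is $\lambda\vert_I$ and its kernel is $W'\coloneqq W\circ(\phi^{-1}\otimes\phi^{-1})$. Moreover, $W$ is $\mu^{\otimes2}$-a.e.\ constant if and only if $W'$ is $\lambda^{\otimes 2}$-a.e.\ constant. The key identity is
\[
(T\otimes T)\xi = \textstyle\sum_{i,j} A_{ij}\delta_{(TS_i,TS_j)},
\]
which I would use in both directions.

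\textbf{Sufficiency.} Suppose $W\equiv c$ a.e.; after modifying $W$ on a $\mu^{\otimes2}$-null set this does not change the law, because $\mu$ is diffuse and Poisson pairs a.s.\ avoid any fixed null set by the Mecke formula. Let $T$ be $\mu$-preserving. Then $T\nu$ is Poisson with intensity $T\mu=\mu$, which is diffuse, so $T\nu$ is simple and $T$ is a.s.\ injective on the atoms. Conditionally on $\nu$, the marks $A_{ij}$ are i.i.d.\ Bernoulli$(c)$ and do not depend on locations. Hence $(T\otimes T)\xi$ is the edge measure of a Poisson graph process with intensity $\mu$ and constant kernel $c$, so $(T\otimes T)\xi\overset{d}{=}\xi$.

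\textbf{Necessity.} I would compare low-order moment or Laplace statistics that identify $W$. By the bivariate Mecke formula, for disjoint bounded $B_1,B_2$,
\[
\ebb\,\xi(B_1\times B_2)=\int_{B_1\times B_2} W\,d\mu^{\otimes2},
\]
and the same formula holds for general $C$ of finite $\mu^{\otimes 2}$-measure once the diagonal is excluded. Exchangeability gives $\ebb\,\xi((T\otimes T)^{-1}C)=\ebb\,\xi(C)$, that is,
\[
\int_C W\circ\ldots\,d\mu^{\otimes 2}\quad\text{turns into}\quad \int_{(T\otimes T)^{-1}C}W\,d\mu^{\otimes2}=\int_C W\,d\mu^{\otimes2}.
\]
So the measure $W\,d\mu^{\otimes2}$ is invariant under $T\otimes T$ for every $\mu$-preserving $T$.

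It then remains to show that such invariance forces $W$ to be constant. Work on $I$ with Lebesgue measure and use interval-swap maps. For disjoint intervals $J,J'$ of equal length, the map $T$ that translates $J$ onto $J'$ and vice versa, and fixes everything else, is measure preserving. Applying it to rectangles shows that the value $w(J_1\times J_2)\coloneqq\int_{J_1\times J_2}W$ depends only on the lengths, provided $J_1,J_2$ are disjoint. Rectangles that hit the diagonal are handled by subdividing into dyadic pieces; the diagonal itself is null. It follows that $\int_{R}W=c\,\lambda_2(R)$ for all off-diagonal dyadic rectangles $R$, with a single constant $c$: compare the value on the unit-scale rectangles $[0,2^{-n})\times[2^{-n},2\cdot2^{-n})$, and use additivity across scales to get consistency. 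A $\pi$-system/monotone class argument then gives $W\,d\lambda_2=c\,d\lambda_2$, hence $W=c$ a.e. If $\mu(S)=\infty$ the same argument runs on every bounded window.

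\textbf{Main obstacle.} The delicate step is the combinatorial one: showing that swap invariance yields a single common constant across all pairs of disjoint rectangles, including pairs lying on the same or symmetric positions relative to the diagonal. I would handle this by checking directly that dyadic squares $D_k\times D_l$ with $k\neq l$ at level $n$ all carry equal mass $m_n$, via swaps sending $(k,l)$ to any other off-diagonal pair. Additivity then gives $m_n=4m_{n+1}$ whenever level-$n$ off-diagonal squares are unions of off-diagonal level-$(n+1)$ squares, which is exactly the case for $k\neq l$. Therefore $m_n=c\,4^{-n}$, and the off-diagonal dyadic squares form a generating $\pi$-system of the complement of the diagonal.
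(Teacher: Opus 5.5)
Your proposal is correct and follows the paper's route: transport to $I$ via Proposition~\ref{PropExchangeRandMeasure}, identify the first moment measure $W\,d\mu^{\otimes 2}$ with the bivariate Mecke formula, deduce its invariance under $T\otimes T$, and force constancy by permuting dyadic intervals. The only difference is in the last step. The paper (Lemma~\ref{LemInvarianceMeasurePresTrans}) works on bounded windows $J$, passes to pointwise a.e.\ invariance of $W$, concludes via martingale convergence of the dyadic conditional expectations $W_m$ (whose diagonal blocks carry a separate value $\beta_m$ of vanishing weight), and then patches over $J_n\uparrow I$. Your scaling relation $m_n=4m_{n+1}$, combined with uniqueness of measures on the $\pi$-system of off-diagonal dyadic squares, yields a single global constant with neither martingales nor patching; for finite $\mu(S)$, first normalise to $I=[0,1)$.
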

To prove this, the following lemma is useful.

\begin{lemma} \label{LemInvarianceMeasurePresTrans}
Let $W:[0,1]^2\to[0,1]$ be measurable, and suppose that for every $\lambda\vert_{[0,1]}$-preserving bijection $T$ of $[0,1]$ the identity $W\circ(T\otimes T)=W$ holds $\lambda^{\otimes2}$-a.e. Then $W$ is $\lambda^{\otimes2}$-a.e.\ constant.
\end{lemma}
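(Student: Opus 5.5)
The strategy is to show that $W$ is $\lambda^{\otimes 2}$-a.e.\ equal to a function that is a.e.\ invariant under \emph{all} measure-preserving bijections. This forces every rectangle average of $W$ to coincide, and a Lebesgue differentiation argument then pins $W$ down to a constant.

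First, the rectangle averages are invariant under permutations of dyadic intervals. Fix $n$ and the dyadic intervals $I_k = [k2^{-n},(k+1)2^{-n})$ for $k<2^n$. For any permutation $\sigma$ of $\{0,\dots,2^n-1\}$, the map $T_\sigma$ translating $I_k$ onto $I_{\sigma(k)}$ (with the point $1$ fixed) is a $\lambda\vert_{[0,1]}$-preserving bijection. The hypothesis $W\circ(T_\sigma\otimes T_\sigma)=W$ a.e.\ integrated over $I_k\times I_l$ gives
\[
m_{kl}\coloneqq 2^{2n}\int_{I_k\times I_l} W\,d\lambda^{\otimes2} = m_{\sigma(k)\sigma(l)}.
\]
Joint permutations act transitively on off-diagonal pairs $(k,l)$ with $k\neq l$, and separately on diagonal pairs. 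Hence all off-diagonal block averages at level $n$ equal a common value $a_n$, and all diagonal ones equal some $b_n$.

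Second, the off-diagonal values do not depend on the level. An off-diagonal level-$n$ block $I_k\times I_l$ is a disjoint union of level-$(n+1)$ blocks, all of which are off-diagonal. Averaging therefore gives $a_n=a_{n+1}$, so $a_n=c$ for every $n$. Consequently
\[
\int_{R} W = c\,\lambda^{\otimes2}(R)
\]
for every dyadic rectangle $R$ of any level that is disjoint from the diagonal blocks at that level. Every point $(x,y)$ with $x\neq y$ lies in such rectangles at all sufficiently fine levels. By the Lebesgue differentiation theorem along the dyadic filtration (martingale convergence of $\ebb[W\mid\Fcal_n]$), the conditional expectations converge a.e.\ to $W$, and at off-diagonal points they eventually equal $c$. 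Hence $W=c$ a.e.\ off the diagonal. Since the diagonal is $\lambda^{\otimes2}$-null, $W=c$ a.e.

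The main obstacle I expect is rigor in the first step rather than any deep difficulty. One must check that $T_\sigma$ is a genuine bijection of $[0,1]$ (handle the endpoint $1$ by fixing it) and that a.e.\ equality transfers correctly under the pushforward: since $T_\sigma\otimes T_\sigma$ preserves $\lambda^{\otimes2}$, the substitution $\int_{I_k\times I_l} W\circ(T_\sigma\otimes T_\sigma)=\int_{I_{\sigma(k)}\times I_{\sigma(l)}} W$ is valid. The diagonal blocks need no control beyond noting that their union has measure $2^{-n}\to0$.
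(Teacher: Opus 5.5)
Your proof is correct and follows essentially the paper's route: realise permutations of dyadic intervals by $\lambda$-preserving bijections, use transitivity of joint permutations on off-diagonal pairs, then apply dyadic martingale convergence. The only difference is in the last step. You make the off-diagonal value level-independent via the nesting $a_n=a_{n+1}$ and use a.e.\ convergence, whereas the paper pins $\alpha_m$ to $c=\int W$ through $\lvert\alpha_m-c\rvert\le 2^{-m}$ and uses $L^1$ convergence.
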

We defer the short proof of this Lemma to the appendix.

\begin{proof}[Proof of Theorem~\ref{ThmExchangeableGraphProcessIsTrivial}]
If $W$ is $\mu^{\otimes2}$-a.e.\ constant the claim is immediate. Conversely, suppose $\xi$ is jointly $\mu$-exchangeable. By Proposition~\ref{PropExchangeRandMeasure} we may transport $\xi$ along a measure-preserving Borel isomorphism $\phi$ and assume that $\xi$ is a jointly $\lambda\vert_I$-exchangeable Poisson graph process on $I=[0,\mu(S))$ with intensity measure $\lambda\vert_I$ connected by $W:I^2\to[0,1]$. It then suffices to show that $W$ is $\lambda^{\otimes2}$-a.e.\ constant on $I^2$.

Fix a bounded subinterval $J\subseteq I$ and a $\lambda\vert_J$-preserving bijection $T$ of $J$, extended by the identity to a $\lambda\vert_I$-preserving bijection of $I$. For bounded measurable $f,g$ supported in $J$, the multivariate Mecke equation \citep[Theorem~4.4]{Penrose2017} gives
\[
  \ebb\!\left[\int f(x)g(y)\,\xi(dx,dy)\right] 
  =\int_{J^2} f(x)g(y)\,W(x,y)\,dx\,dy,
\]
using that the diagonal adjacency marks of $\xi$ are zero by Definition~\ref{DefGraphProcess}; the right-hand side is finite as $\lambda(J)<\infty$ and $W\le 1$. By joint exchangeability $(T\otimes T)\xi\overset{d}{=}\xi$, so
\[
  \ebb\!\left[\int f(x)g(y)\,\xi(dx,dy)\right]
  =\ebb\!\left[\int f(x)g(y)\,(T\otimes T)\xi(dx,dy)\right]
  =\ebb\!\left[\int f(Tx)g(Ty)\,\xi(dx,dy)\right].
\]
As $f\circ T$ and $g\circ T$ are again supported in $J$, the Mecke identity applies to the right-hand side and yields
\[
  \int_{J^2} f(x)g(y)\,W(x,y)\,dx\,dy
  =\int_{J^2} f(Tx)g(Ty)\,W(x,y)\,dx\,dy.
\]
Composing the right-hand side with $T^{-1}\otimes T^{-1}$, which also preserves $\lambda^{\otimes2}$, gives
\[
  \int_{J^2} f(x)g(y)\,W(x,y)\,dx\,dy
  =\int_{J^2} f(x)g(y)\,W\bigl(T^{-1}x,T^{-1}y\bigr)\,dx\,dy
\]
for all such $f,g$, and so
\[
  W=W\circ\bigl(T^{-1}\otimes T^{-1}\bigr)\qquad\lambda^{\otimes2}\text{-a.e.\ on }J^2.
\]
Rescaling $J$ to $[0,1]$ and applying Lemma~\ref{LemInvarianceMeasurePresTrans}, the restriction of $W$ to $J^2$ equals a constant $c_J$ $\lambda^{\otimes2}$-a.e. Choosing $J_n\uparrow I$ with $\lambda(J_n)>0$, the overlaps $J_n^2\subseteq J_{n+1}^2$ have positive measure, so all $c_{J_n}$ coincide in a common value $c$, giving $W=c$ $\lambda^{\otimes2}$-a.e.\ on $I^2$.
\end{proof}

Note that $\xi$ is jointly $\mu$-exchangeable if and only if it is jointly $\delta\mu$-exchangeable for every $\delta>0$, since a map preserves $\mu$ exactly when it preserves $\delta\mu$. The following example illustrates the essential idea behind the proof of Theorem~\ref{ThmExchangeableGraphProcessIsTrivial}, namely that joint exchangeability fails when the invariance under measure-preserving transformations does not pass to the connection kernel $W$.
\begin{example}
    Let $\xi$ be the edge measure \eqref{EqRandomAdjacencyMeasure2} of a graph process on $\rbb$ directed by a unit-rate Poisson point process and connected by $W:(x,y)\mapsto\mathbf{1}\{\norm{x-y}\le1\}$. Then $\xi$ is not jointly $\lambda$-exchangeable as a random measure on $\rbb^2$. To see this, let $T$ be the $\lambda$-preserving transformation that swaps the intervals $(1,2]$ and $(4,5]$. Then $\ebb[\xi((1,2]\times(2,3])]>0$, whereas $(T\otimes T)\xi\bigl((1,2]\times(2,3]\bigr)=\xi((4,5]\times(2,3])\overset{a.s.}{=}0$, since every $x\in(4,5]$ and $y\in(2,3]$ satisfy $\norm{x-y}>1$.
\end{example}

\section{Exchangeability} \label{SectionExchangeability}

Recall that the class of infinite random graphs with jointly exchangeable adjacency matrices is completely characterised by mixtures of graphon models. In this section we introduce a new notion of exchangeability for graph processes, which we call \textit{bounded exchangeability}. For bounded $B\subseteq S$, we show that under a mild support condition on the directing process, made precise in Definition~\ref{DefBoundedExchangeability}, the restriction $\xi\vert_B$ to vertices in $B$ determines an infinite jointly exchangeable matrix $A^B$, unique in distribution. This holds even when the adjacency matrix of the global graph is not jointly exchangeable. As a consequence, boundedly exchangeable graph processes can locally be represented as graphon models. This involves a local graphon $W_B:[0,1]^2\to[0,1]$, which, up to measure-preserving transformations, is the composition of the zero-diagonal modification 
\begin{equation*}
    \widetilde{W}(x,y) \coloneqq \mathbf{1}\{x \neq y\} W(x,y), \qquad \text{for all } x,y \in S
\end{equation*}
of the connection kernel $W : S^2 \to [0,1]$ with a local sampling function $f_B:[0,1]\to B$. Finally, we establish relations between the local graphons as the observation window changes.

\subsection{Boundedly exchangeable graph processes}

For a real-valued matrix $A=(A_{ij})_{i,j \ge 1}$ and a permutation $\pi$ of $\nbb$, define the permuted matrix $A\circ\pi\coloneqq(A_{\pi(i)\pi(j)})_{i,j \ge 1}$. For $n\in\nbb$, let $A[n]\coloneqq(A_{ij})_{i,j\le n}$ denote the leading principal $n\times n$ submatrix of $A$. We call a permutation $\pi$ of $\nbb$ \textit{finitely supported} if $\pi(i)=i$ for all but finitely many $i\in\nbb$.

First, we prove a lemma concerning random matrices of random finite dimension, establishing necessary and sufficient conditions under which the dimension-conditional distributions arise as the laws of the leading principal submatrices of an infinite jointly exchangeable matrix, which is then unique in distribution. Consider the space of finite-dimensional real-valued square matrices $\Rcal\coloneqq\bigsqcup_{n=0}^{\infty} \rbb^{n\times n}$. A random element $\mathbf A=(A,N)$ in $\Rcal$ consists of a random variable $N\in\nbb_0$ and a random matrix $A\in\rbb^{N\times N}$.
\begin{lemma} \label{LemLifting}
    Let $\mathbf{A} = (A, N)$ be a random element in $\Rcal$ such that $\pbb(N = n) > 0$ for every $n \in \nbb$. Then $\mathbf{A}$ satisfies
    \begin{enumerate}[label=(\roman*)]
        \item (Finite exchangeability) For all $n \geq 1$ and all permutations $\pi$ of $\{1,\dots,n\}$:
        \begin{equation*}
            A \mid (N = n) \overset{d}{=} A \circ \pi \mid (N = n)
        \end{equation*}
        \item (Compatibility) For all $n \geq 1$ and all $m \geq n$:
        \begin{equation*}
            A \mid (N = n) \overset{d}{=} A[n] \mid (N = m)
        \end{equation*}
    \end{enumerate}
    if and only if there exist an infinite jointly exchangeable matrix $A^{(\infty)} \in \rbb^{\nbb \times \nbb}$ such that for all $n \geq 1$:
    \begin{equation} \label{EqLiftingLemma}
        A \mid (N = n) \overset{d}{=} A^{(\infty)}[n]
    \end{equation}
    In that case, $A^{(\infty)}$ is unique in distribution. 
\end{lemma}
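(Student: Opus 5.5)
The plan is to prove the two directions separately and then deduce uniqueness, using the Kolmogorov extension theorem as the main tool.

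\emph{Sufficiency.} Assume (i) and (ii). For each $n\ge1$ let $P_n$ be the law on $\rbb^{n\times n}$ of $A\mid(N=n)$; it is well defined because $\pbb(N=n)>0$. Taking $m=n+1$ in (ii) shows that $P_n$ is the image of $P_{n+1}$ under the projection $\rbb^{(n+1)\times(n+1)}\to\rbb^{n\times n}$, $M\mapsto M[n]$. Hence $(P_n)$ is a projective family on the Polish spaces $\rbb^{n\times n}$. The Kolmogorov extension theorem then gives a random matrix $A^{(\infty)}\in\rbb^{\nbb\times\nbb}$, which we identify with $\rbb^{\nbb^2}$ under the product topology. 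It satisfies $A^{(\infty)}[n]\sim P_n$ for every $n$, which is \eqref{EqLiftingLemma}.

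For joint exchangeability \eqref{EqExchangeableArray}, fix $n$ and a permutation $\pi$ of $\{1,\dots,n\}$. Then $A^{(\infty)}[n]\overset{d}{=}A\mid(N=n)\overset{d}{=}A\circ\pi\mid(N=n)\overset{d}{=}A^{(\infty)}[n]\circ\pi$ by (i). The last identity holds because $M\mapsto M\circ\pi$ is a measurable map applied to equally distributed matrices.

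\emph{Necessity.} Suppose $A^{(\infty)}$ is jointly exchangeable and satisfies \eqref{EqLiftingLemma}. Then (i) follows directly from $A\mid(N=n)\overset{d}{=}A^{(\infty)}[n]\overset{d}{=}A^{(\infty)}[n]\circ\pi\overset{d}{=}(A\circ\pi)\mid(N=n)$. For (ii), the identity $A^{(\infty)}[m][n]=A^{(\infty)}[n]$ gives $A[n]\mid(N=m)\overset{d}{=}A^{(\infty)}[m][n]=A^{(\infty)}[n]\overset{d}{=}A\mid(N=n)$.

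\emph{Uniqueness.} If $A^{(\infty)}$ and $B^{(\infty)}$ both satisfy \eqref{EqLiftingLemma}, then all their leading principal submatrices agree in law. Every finite-dimensional cylinder set of $\rbb^{\nbb^2}$ depends only on some $A[n]$. These cylinder events form a $\pi$-system generating the product $\sigma$-algebra, so the two laws coincide.

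The only step requiring care is checking the consistency hypothesis for Kolmogorov. Since the index set of coordinates is $\nbb^2$, it suffices to use the nested family $\{1,\dots,n\}^2$: any finite set of coordinates lies inside some $\{1,\dots,n\}^2$, and (ii) gives consistency along this chain. Joint exchangeability of $A^{(\infty)}$ is not needed for this step; it enters only through (i). I do not expect a genuine obstacle here, only bookkeeping about which coordinate projections are involved.
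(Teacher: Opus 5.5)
Your proposal is correct and follows essentially the same route as the paper: Kolmogorov extension along the nested index sets $\{1,\dots,n\}^2$ using (ii), with joint exchangeability of $A^{(\infty)}$ inherited from (i) through \eqref{EqLiftingLemma}. The differences are cosmetic. You check exchangeability directly in the finite form \eqref{EqExchangeableArray}, whereas the paper restricts finitely supported permutations of $\nbb$ to $\{1,\dots,n\}$ for large $n$. You also spell out the converse and the uniqueness argument (via the cylinder $\pi$-system), which the paper treats as immediate.
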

\begin{proof}
    The converse is trivial, so we only prove the forward direction. By the compatibility of conditional laws and full support on $\nbb$, Kolmogorov's extension theorem applies. Hence, there exists a random matrix $A^{(\infty)} \in \rbb^{\mathbb{N} \times \mathbb{N}}$, which is unique in distribution, such that \eqref{EqLiftingLemma} holds for all $n \geq 1$. Let $\pi$ be a finitely supported permutation of $\nbb$ and choose $n_0$ with $\pi(i)=i$ for all $i>n_0$. For every $n\ge n_0$ the permutation $\pi$ restricts to a permutation of $\{1,\dots,n\}$, so
    \begin{equation*}
        \left( A^{(\infty)} \circ \pi \right)[n] = \left(A^{(\infty)}[n] \right) \circ \pi \overset{d}{=} A^{(\infty)}[n],
    \end{equation*}
    the equality of laws holding by \eqref{EqLiftingLemma} together with condition~(i). Since this holds for all $n \ge n_0$ and $\pi$ is arbitrary, it follows that $A^{(\infty)}$ is jointly exchangeable.
\end{proof} 

The idea is to apply Lemma~\ref{LemLifting} to the adjacency matrix of a graph process. For the adjacency matrix of an i.i.d.\ representation, the first two conditions follow from the i.i.d.\ structure of the locations and the conditional independence of the marks; in light of Proposition~\ref{PropLocalIidRep} we therefore make the following definition.
\begin{definition} \label{DefBoundedExchangeability}
    We call a graph process $\xi$ on $S$ \textit{boundedly exchangeable} if it is directed by a mixed Poisson or mixed binomial process $\nu$ and, for every $B\in\widehat{\Bcal}(S)$ such that $\pbb(\nu(B)=0)<1$, we have $\pbb(\nu(B)=n)>0$ for all $n\in\nbb$.
\end{definition}
Note that $\xi$ is automatically boundedly exchangeable when $\nu$ is a mixed Poisson process, since the count $\nu(B)$ then charges every $n\in\nbb$ whenever $\pbb(\nu(B)=0)<1$. For a mixed binomial process this need not hold, which is why the support condition is imposed.

\begin{theorem} \label{ThmUniqueLocalInfiniteExchangeableMatrix}
    Let $\xi$ be a boundedly exchangeable graph process on $S$ directed by $\nu$. For every $B \in \widehat{\Bcal}(S)$ where $\pbb(\nu(B) = 0) < 1$ there exists an infinite jointly exchangeable matrix $A^B \in \rbb^{\nbb \times \nbb}$ such that the adjacency matrix $(A, N)$ of any i.i.d.\ representation of $\xi\vert_B$ satisfies for all $n \geq 1$:
    \begin{equation} \label{EqUniqueLocalInfiniteExchangeableMatrix}
        A \mid (N = n) \overset{d}{=} A^B[n]
    \end{equation}
    Moreover, the distribution of $A^B$ is unique.
\end{theorem}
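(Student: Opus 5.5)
The plan is to apply Lemma~\ref{LemLifting} to the adjacency matrix of an i.i.d.\ representation of $\xi\vert_B$, and then to check that the resulting law does not depend on which i.i.d.\ representation was chosen.

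\textbf{Setup.} Fix $B \in \widehat{\Bcal}(S)$ with $\pbb(\nu(B)=0)<1$.
By Proposition~\ref{PropLocalIidRep}, $\xi\vert_B$ admits an i.i.d.\ representation $\sum_{i,j\le N}\delta_{(S_i,S_j,A_{ij})}$. Here $N=\nu(B)<\infty$ a.s., the $(S_i)$ are i.i.d.\ with some law $\hat\mu$ on $B$, $N\indep(S_i)$, and $\xi\vert_B$ is connected by $W$ (Proposition~\ref{PropRestrictionOfGraphProcess}). Bounded exchangeability gives $\pbb(N=n)>0$ for every $n\in\nbb$, so $(A,N)$ is a random element of $\Rcal$ satisfying the support hypothesis of Lemma~\ref{LemLifting}.

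\textbf{Conditional law given $N=n$.} Since $N\indep(S_i)$, the locations $(S_1,\dots,S_n)$ are i.i.d.\ $\hat\mu$ under $\{N=n\}$. Given them, the off-diagonal marks are independent with $A_{ij}\sim\mathrm{Bernoulli}(W(S_i,S_j))$ and the diagonal vanishes. One technical point is that Definition~\ref{DefGraphProcess} conditions on $(N,(S_k))$ jointly, so conditioning further on the event $\{N=n\}$ is legitimate. Writing $U_{ij}$ for the auxiliary uniforms, we get
\[
A\mid(N=n)\overset{d}{=}\bigl(\mathbf 1\{i\neq j\}\mathbf 1\{U_{ij}\le W(S_i,S_j)\}\bigr)_{i,j\le n},
\]
with $S_1,\dots,S_n$ i.i.d.\ $\hat\mu$. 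This law depends only on $(n,\hat\mu,W)$.

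\textbf{Hypotheses of Lemma~\ref{LemLifting}.} Finite exchangeability holds because permuting indices permutes the i.i.d.\ locations and the independent uniforms, and $W$ is symmetric. Compatibility holds because the leading $n\times n$ block of the $m$-dimensional construction is exactly the $n$-dimensional construction. Lemma~\ref{LemLifting} then yields an infinite jointly exchangeable $A^B$, unique in distribution, satisfying \eqref{EqUniqueLocalInfiniteExchangeableMatrix}.

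\textbf{Independence from the representation.} This is the step I expect to be the main obstacle. Take any other i.i.d.\ representation $(S'_i,N',A')$ of $\xi\vert_B$. Then $N'=\nu(B)=N$ a.s.; in each case the count is the total mass of the directing measure $\nu\vert_B$. Conditionally on $\{N=n\}$ with $n\ge1$, the empirical measure $\sum_{i\le n}\delta_{S_i}$ equals $\nu\vert_B$. The law of the unordered point set given $N=n$ is $\hat\mu^{\otimes n}$ symmetrised, and its one-point marginal, the intensity of $\nu\vert_B$ given $N=n$ divided by $n$, is $\hat\mu$. Hence $\hat\mu$ is determined by the law of $\nu\vert_B$ given $N=n$, and so $\hat\mu'=\hat\mu$.

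The remaining issue is that $W$ itself is not unique. I would therefore avoid identifying $W$ and work directly with the law of $\xi\vert_B$ given $N=n$. The adjacency matrix in an i.i.d.\ representation is obtained from the atom configuration by labelling the points in i.i.d.\ exchangeable order. Conditionally on the configuration, that labelling is a uniformly random ordering of the $n$ distinct atoms, because the i.i.d.\ labels of a.s.\ distinct points are exchangeable. So $A\mid(N=n)$ is the law of the adjacency array of $\xi\vert_B$ read off in a uniformly random order, which is a functional of the law of $\xi\vert_B$ alone. Consequently, all i.i.d.\ representations share the same conditional laws, and uniqueness in Lemma~\ref{LemLifting} gives a single law for $A^B$.
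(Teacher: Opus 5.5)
Your proposal is correct and follows essentially the same route as the paper. You apply Lemma~\ref{LemLifting} to an i.i.d.\ representation, then observe that on $\{N=n\}$ the labelling of the atoms of $\xi\vert_B$ is conditionally uniform over the $n!$ enumerations, so the conditional law of $A$ is a functional of the law of $\xi\vert_B$ alone. The paper makes this uniformity rigorous by disintegrating the law of the labelled pair $((S_i)_{i\le n},(A_{ij})_{i,j\le n})$ along the enumeration-to-measure map $\Phi_n$, using a.e.\ uniqueness of the kernel, $\mathfrak{S}_n$-invariance of the \emph{joint} labelled pair (this is what is needed rather than exchangeability of the locations alone, and it follows from your finite-exchangeability check), and transitivity on the fibres; your identification $\hat\mu'=\hat\mu$ is correct but not needed.
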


\begin{proof}
Let $B \in \widehat{\Bcal}(S)$ with $\pbb(\nu(B) = 0) < 1$. By bounded exchangeability, $\pbb(\nu(B)=n)>0$ for every $n\in\nbb$, so the conditional laws below are well defined. By Proposition~\ref{PropLocalIidRep}, $\xi\vert_B$ admits an i.i.d.\ representation
\[
  \xi\vert_B \overset{a.s.}{=} \sum_{i,j \leq N} \delta_{(S_i,S_j,A_{ij})},
\]
where $(S_i)$ are i.i.d.\ random elements of $B$, independent of $N \overset{a.s.}{=} \nu(B)$. If $W : S^2 \to [0,1]$ is a connection kernel for this representation, then
\[
  A \mid (N = n) \overset{d}{=} \bigl(\mathbf{1}\{i\ne j\} \mathbf{1}\{U_{ij}\le W(S_i,S_j)\}\bigr)_{i,j\le n}.
\]
Thus Lemma~\ref{LemLifting} applies and yields an infinite jointly exchangeable matrix $A^B$ satisfying \eqref{EqUniqueLocalInfiniteExchangeableMatrix}. It may be realized as
\[
  A^B_{ij} \coloneqq \mathbf{1}\{i\ne j\} \mathbf{1}\{U_{ij} \le W(S_i,S_j)\}, \qquad i,j \in \nbb.
\]

It remains to show that the law of $A^B$ is independent of the chosen i.i.d.\ representation. Fix $n \in \nbb$ and work conditionally on $\{N=n\}$. Denote by $\Acal_n(B)$ the standard Borel space of adjacency measures on $B$ with $n$ vertices, let
\begin{align*}
    \Ycal_n &\coloneqq \bigl\{(s_1,\dots,s_n) \in B^n \mid s_i \neq s_j \text{ for } i \neq j \bigr\} \times 
    \bigl\{(a_{ij})_{i,j \leq n} \in \{0,1\}^{n \times n} \mid a_{ii} = 0, a_{ij} = a_{ji} \bigr\},
\end{align*}
also standard Borel, and define the measurable map
\begin{equation*}
    \Phi_n : \Ycal_n \to \Acal_n(B), \qquad 
    \Phi_n \bigl( (s_i)_{i \le n}, (a_{ij})_{i,j \le n}\big) \coloneqq \sum_{i,j \le n} \delta_{(s_i, s_j, a_{ij})}.
\end{equation*}
The group $\mathfrak{S}_n$ of permutations of $\{1, \dots, n\}$ acts on $\Ycal_n$ by
\begin{equation*}
    \pi \bigl( (s_i)_{i \le n}, (a_{ij})_{i,j \le n} \bigr) \coloneqq \bigl( (s_{\pi(i)})_{i \le n}, (a_{\pi(i)\pi(j)})_{i,j \le n} \bigr).
\end{equation*}
The labelled pair $L_n \coloneqq \bigl( (S_i)_{i\leq n}, (A_{ij})_{i,j \le n} \bigr)$ is invariant in distribution under this action, $\Phi_n$ is invariant, and each fibre $\Phi_n^{-1}\{\zeta\}$ consists of the $n!$ enumerations of the vertex set of $\zeta \in \Acal_n(B)$. The law $\mu_n$ of $L_n$ pushes forward under $\Phi_n$ to $\nu_n \coloneqq \Lcal(\xi\vert_B \mid N = n)$. 

By \citet[Theorem~8.5]{Kallenberg2021}, there exists a probability kernel $K_n$ from $\Acal_n(B)$ to $\Ycal_n$ such that 
\begin{enumerate}[label=(\roman*)]
    \item $K_n(\zeta, \Phi^{-1}_n(\zeta)) = 1$ for $\nu_n$-a.e. $\zeta \in \Acal_n(B)$,
    \item $\mu_n(E) = \int_{\Acal_n(B)} K_n(\zeta, E)\,\nu_n(d\zeta)$ for every $E \in \Bcal(\Ycal_n)$,
\end{enumerate}
and any two probability kernels satisfying (i)--(ii) agree for $\nu_n$-a.e. $\zeta$. Note that, for every $\pi\in\mathfrak{S}_n$,
\begin{equation*}
    K_n^\pi(\zeta, E)
    \coloneqq
    K_n(\zeta, \pi^{-1}E),
\end{equation*}
is another probability kernel satisfying (i)--(ii); this follows from finite exchangeability of $L_n$ and invariance of $\Phi_n$. By uniqueness, $K^{\pi}_n(\zeta, \cdot) = K_n(\zeta, \cdot)$ for $\nu_n$-a.e. $\zeta$. Since $K_n(\zeta, \cdot)$ is a probability measure on the finite set $\Phi^{-1}_n(\zeta)$ for $\nu_n$-a.e. $\zeta$, and $\mathfrak{S}_n$ acts transitively on this set, we conclude that $K_n(\zeta, \cdot)$ is the uniform measure on the fibre $\Phi^{-1}_n(\zeta)$ for $\nu_n$-a.e. $\zeta$. 

If $\bigl( (S'_i)_{i\leq n}, (A'_{ij})_{i,j \leq n} \bigr)$ is obtained from another i.i.d.\ representation, then $N' \overset{a.s.}{=} N$ and the same argument yields a probability kernel $K'_n$ from $\Acal_n(B)$ to $\Ycal_n$ such that $K'_n(\zeta, \cdot)$ is the uniform measure on the fibre $\Phi^{-1}_n(\zeta)$ for $\nu_n$-a.e. $\zeta$. Integrating both with respect to $\nu_n$, (ii) yields
\begin{equation*}
    A \mid (N = n) \overset{d}{=} A' \mid (N' = n)
\end{equation*}
for every $n \in \nbb$. By the uniqueness clause of Lemma~\ref{LemLifting}, the law of $A^B$ is the same for both representations.
\end{proof}
We call the unique distribution determined by \eqref{EqUniqueLocalInfiniteExchangeableMatrix} the \textit{(jointly) exchangeable distribution associated with $\xi$ on $B$}. Any random matrix having this distribution is called a \textit{(jointly) exchangeable matrix associated with $\xi$ on $B$}. In particular, every associated matrix $A^B$ has a vanishing diagonal almost surely; equivalently,
\[
\mathbb{P}\bigl(A^B_{ii}=0\text{ for all }i\in\mathbb{N}\bigr)=1.
\]

\subsection{Aldous--Hoover--Kallenberg representations}

Recall that infinite jointly exchangeable arrays are characterised by their Aldous--Hoover--Kallenberg representation, which we now discuss in relation to boundedly exchangeable graph processes; we will see that the bounded restrictions realize the ergodic case of this representation. Throughout this subsection, let $\xi$ be a boundedly exchangeable graph process on $S$ directed by $\nu$ and connected by a connection kernel $W:S^2\to[0,1]$. Also, let $\widetilde{W}(x,y) \coloneqq \mathbf{1}\{x \neq y\}W(x,y)$ for every $x,y \in S$ be the zero-diagonal modification of $W$. Thus $\widetilde W=W$ off the diagonal of $S^2$, while $\widetilde W(x,x)=0$ for every $x \in S$.

We use the following transfer lemma.
\begin{lemma}[{\citealp[Lemma~4.22]{Kallenberg2021}}] \label{LemTransfer}
    Let $K$ be a probability kernel from a measurable space $X$ to a standard Borel space $Y$. Then there exists a measurable function $f : X \times [0,1] \to Y$ such that if $U$ is $\mathrm{U}[0,1]$, then $f(x, U)$ has distribution $K(x, \cdot)$ for every $x \in X$.
\end{lemma}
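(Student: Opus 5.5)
The plan is to reduce to the case $Y\subseteq\rbb$ via a Borel isomorphism, and then use the quantile (inverse distribution function) construction, checking joint measurability in $(x,u)$.

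\emph{Reduction to the real line.} Since $Y$ is standard Borel, there is a Borel isomorphism $\iota : Y\to Y_0$ onto a Borel subset $Y_0\subseteq[0,1]$ (Kuratowski). Define the kernel $K_0(x,\cdot)\coloneqq K(x,\iota^{-1}(\cdot\cap Y_0))$ on $[0,1]$. Then $x\mapsto K_0(x,C)$ is measurable for every Borel $C$, and $K_0(x,Y_0)=1$ for every $x$. If $f_0:X\times[0,1]\to[0,1]$ is measurable with $f_0(x,U)\sim K_0(x,\cdot)$, fix some $y_0\in Y$ and set
\[
f(x,u)\coloneqq \iota^{-1}(f_0(x,u)) \ \text{ if } f_0(x,u)\in Y_0,\qquad f(x,u)\coloneqq y_0 \ \text{ otherwise}.
\]
This $f$ is measurable because $Y_0$ is Borel and $\iota^{-1}$ is measurable. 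Moreover $f_0(x,U)\in Y_0$ almost surely for each fixed $x$, so $f(x,U)\sim K(x,\cdot)$. It therefore suffices to treat $Y=[0,1]$.

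\emph{Quantile construction.} Put $F(x,t)\coloneqq K_0(x,[0,t])$ for $t\in[0,1]$. This function is measurable in $x$ for each $t$, and nondecreasing and right-continuous in $t$, with $F(x,1)=1$. Define
\[
f_0(x,u)\coloneqq \inf\{t\in\mathbb{Q}\cap[0,1] : F(x,t)\ge u\},
\]
with the convention $\inf\emptyset \coloneqq 1$; this case can only arise at $u$-values of probability zero. By right-continuity one has the key equivalence $f_0(x,u)\le s \iff u\le F(x,s)$ for $s\in[0,1]$ and $u\in(0,1]$.

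\emph{Measurability and distribution.} The equivalence gives
\[
\{(x,u): f_0(x,u)\le s\}=\{(x,u): u\le F(x,s)\}\cup(X\times\{0\}),
\]
which is product-measurable because $F(\cdot,s)$ is measurable. Hence $f_0$ is jointly measurable. For fixed $x$, the same equivalence yields $\pbb(f_0(x,U)\le s)=\pbb(U\le F(x,s))=F(x,s)$. Distribution functions determine Borel probability measures on $[0,1]$, so $f_0(x,U)\sim K_0(x,\cdot)$.

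The only delicate point is justifying the equivalence $f_0\le s\iff u\le F(x,s)$ with an infimum over rationals, so that measurability is automatic. One direction follows from monotonicity of $F$, and the other from right-continuity, approximating $s$ from above by rationals. Everything else is routine bookkeeping of null sets.
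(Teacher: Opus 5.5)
Your proof is correct and follows essentially the same route as the source the paper relies on: the paper gives no proof of its own and simply cites Lemma~4.22 of Kallenberg (2021). That proof likewise treats $Y$ as a Borel subset of $[0,1]$, reduces to $Y=[0,1]$, and uses the quantile transform with the extremum restricted to rationals to obtain joint measurability; your $\inf\emptyset$ convention is in fact never needed, since $t=1$ always satisfies $F(x,1)=1\ge u$.
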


First, we describe a localization procedure by which the zero-diagonal modification $\widetilde W$ of the connection kernel, composed with a local sampling function, gives rise to local graphons, recovering the classical ergodic graphon model on each bounded region.
\begin{proposition}\label{PropLocalGraphon}
    For any $B\in\widehat{\Bcal}(S)$ with $\pbb(\nu(B)=0)<1$, there exists a graphon $W_B : [0,1]^2 \to [0,1]$ such that every exchangeable matrix $A^B$ associated with $\xi$ on $B$ admits a representation
    \begin{equation}\label{EqAldousHooverKallenbergLocalGraphon}
        A^B_{ij} \overset{a.s.}{=} \mathbf{1}\{U_{ij}\le W_B(U_i,U_j)\}, \qquad i,j \in \nbb.
    \end{equation}
\end{proposition}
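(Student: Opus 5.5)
The plan is to build $W_B$ explicitly from the local vertex distribution and the zero-diagonal kernel $\widetilde W$. Then I show that the array defined by \eqref{EqAldousHooverKallenbergLocalGraphon} has the exchangeable distribution associated with $\xi$ on $B$. Finally, I upgrade equality in law to an almost sure representation.

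First fix $B$ with $\pbb(\nu(B)=0)<1$. By Proposition~\ref{PropLocalIidRep} and Lemma~\ref{LemmaRestrictionMixedPoisson}, $\xi\vert_B$ has an i.i.d.\ representation whose locations $(S_i)$ are i.i.d.\ with some law $\hat\mu_B$ on $B$. Since $B$ is a Borel subset of a Polish space, it is standard Borel. Apply Lemma~\ref{LemTransfer} with a trivial one-point space $X$ and the constant kernel $\hat\mu_B$. This gives a measurable $f_B:[0,1]\to B$ with $f_B(U)\sim\hat\mu_B$ for $U\sim \mathrm{U}[0,1]$. Define
\[
  W_B \coloneqq \widetilde W\circ(f_B\otimes f_B), \qquad W_B(u,v)=\mathbf 1\{f_B(u)\neq f_B(v)\}\,W(f_B(u),f_B(v)).
\]
This is measurable, symmetric and $[0,1]$-valued, so it is a graphon. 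It also satisfies $W_B(u,u)=0$, consistent with the zero-diagonal convention.

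Next set $\widehat A_{ij}\coloneqq \mathbf 1\{U_{ij}\le W_B(U_i,U_j)\}$. The diagonal vanishes because $W_B(u,u)=0$ and we may take $U_{ii}>0$ a.s. Writing $S_i'\coloneqq f_B(U_i)$, the $S'_i$ are i.i.d.\ $\hat\mu_B$. For every $n$, $\widehat A[n]$ therefore has the law of $\bigl(\mathbf 1\{i\ne j\}\mathbf 1\{U_{ij}\le W(S_i',S_j')\}\bigr)_{i,j\le n}$, up to the following point. Under a diffuse $\hat\mu_B$, ties $S_i'=S_j'$ with $i\neq j$ have probability zero. In general, ties can be absorbed by the indicator $\mathbf 1\{i\ne j\}$, since the proof of Theorem~\ref{ThmUniqueLocalInfiniteExchangeableMatrix} conditions on a.s.\ distinct atoms, so ties are null there as well. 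This expression is exactly the conditional law $A\mid(N=n)$ displayed in the proof of Theorem~\ref{ThmUniqueLocalInfiniteExchangeableMatrix}. The array $\widehat A$ is jointly exchangeable, so by the uniqueness clause of that theorem, $\widehat A\overset{d}{=}A^B$ for every associated matrix $A^B$.

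The remaining step is to pass from $A^B\overset{d}{=}\widehat A$ to an almost sure representation \eqref{EqAldousHooverKallenbergLocalGraphon} for a given $A^B$. I expect this to be the main subtlety, since it requires constructing uniforms $(U_i),(U_{ij})$ adapted to $A^B$. The first route I would try is the paper's standing convention, \citet[Corollary~8.18]{Kallenberg2021}. The pair $\bigl(\widehat A,(U_i),(U_{ij})\bigr)$ has a measurable relation $\widehat A=F\bigl((U_i),(U_{ij})\bigr)$. By the transfer theorem there exist $(U_i'),(U_{ij}')$ with $\bigl(A^B,(U'_i),(U'_{ij})\bigr)\overset{d}{=}\bigl(\widehat A,(U_i),(U_{ij})\bigr)$. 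Hence $A^B=F\bigl((U_i'),(U_{ij}')\bigr)$ a.s., since the set where the relation holds is measurable and has full mass. This requires the probability space to be rich enough, which the paper assumes. Renaming the $U'$ as $U$ gives \eqref{EqAldousHooverKallenbergLocalGraphon}. As an alternative, one could invoke the Aldous--Hoover--Kallenberg theorem directly, but that would only yield some $w$ possibly depending on $U_\emptyset$, whereas the construction above gives the ergodic form with the explicit $W_B$.
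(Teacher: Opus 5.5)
Your proposal is correct and follows essentially the paper's route: the same sampling function $f_B$ from Lemma~\ref{LemTransfer}, the same $W_B=\widetilde W\circ(f_B\otimes f_B)$, almost sure distinctness of the locations to pass from $W$ to $\widetilde W$, and an equality-in-law plus transfer argument to reach an arbitrary associated matrix (a step you make more explicit than the paper, which only remarks that the representation carries over to any $\tilde A^B\overset{d}{=}A^B$). One small correction to your tie discussion: the indicator $\mathbf 1\{i\ne j\}$ does not absorb coincidences $S'_i=S'_j$ with $i\ne j$; what makes them null is that the atoms of $\nu$ are a.s.\ distinct, and together with $N\indep(S_k)$ and $\pbb(N\ge 2)>0$ this forces $\hat\mu_B$ to be diffuse.
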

\begin{proof}
    By the construction in Theorem~\ref{ThmUniqueLocalInfiniteExchangeableMatrix} there is an array $(S^B_i)$ of i.i.d.\ random elements of $B$ such that the matrix defined by
    \[
        A^B_{ij} \coloneqq \mathbf{1}\{i\ne j\} \mathbf{1}\bigl\{U_{ij}\le W(S^B_i,S^B_j)\bigr\}, \qquad i,j \in \nbb,
    \]
    is an exchangeable matrix associated with $\xi$ on $B$. Since the $S^B_i$ are almost surely distinct, this matrix satisfies
    \[
        A^B_{ij} \overset{a.s.}{=} \mathbf{1}\bigl\{U_{ij}\le \widetilde{W}(S^B_i,S^B_j)\bigr\}, \qquad i,j \in \nbb.
    \]
    As a Borel subset of the Polish space $S$, the set $B$ is a standard Borel space. If $\eta_B$ is the marginal law of the $(S^B_i)$, then by Lemma~\ref{LemTransfer} there exists a measurable $f_B:[0,1]\to B$ with $f_B(U)\sim\eta_B$ for $U\sim\mathrm U[0,1]$; hence there exists a representation $(S^B_i)\overset{a.s.}{=}f_B(U_i)$. Substituting this yields
    \[
        A^B_{ij} \overset{a.s.}{=} \mathbf{1}\bigl\{U_{ij}\le \widetilde{W}(f_B(U_i),f_B(U_j))\bigr\}, \qquad i,j \in \nbb,
    \]
    which is \eqref{EqAldousHooverKallenbergLocalGraphon} with $W_B \coloneqq \widetilde{W} \circ(f_B\otimes f_B)$. Indeed, $W_B$ is a graphon, being symmetric and measurable as $W$ is. Since $\tilde A^B\overset{d}{=}A^B$ for any other exchangeable matrix $\tilde A^B$ associated with $\xi$ on $B$, the same representation holds for $\tilde A^B$.
\end{proof}
We call $W_B$ a \textit{local graphon} for $\xi$ on $B$. Observe that \eqref{EqAldousHooverKallenbergLocalGraphon} is the ergodic Aldous--Hoover--Kallenberg representation of an infinite jointly exchangeable adjacency matrix; in other words, the exchangeable distribution associated with $\xi$ on $B$ is a graphon model based on $W_B$. Similarly, we call $f_B$ a \textit{sampling function} for $\xi$ on $B$. Its defining property, that $f_B(U)\sim\eta_B$ for $U\sim\mathrm U[0,1]$, depends only on the marginal law $\eta_B$ of the $(S^B_i)$ and hence not on the connection kernel $W$; in particular a single sampling function localizes every kernel connecting $\xi$. The function $f_B$ itself is generally not unique.

Like the sampling functions, local graphons are generally not unique; however, any two are related through measure-preserving transformations.
\begin{proposition} \label{PropLocalGraphonUniqueness}
    For any $B \in \widehat{\Bcal}(S)$ with $\pbb(\nu(B) = 0) < 1$, let $W_B$ and $W'_B$ be two local graphons for $\xi$ on $B$. Then there exist $\lambda\vert_{[0,1]}$-preserving transformations $T, T' : [0,1] \to [0,1]$ such that
    \begin{equation} \label{EqLocalGraphonEquivalence}
        W_B \circ (T \otimes T) = W'_B \circ (T' \otimes T') \quad \lambda^{\otimes 2}\text{-a.e.\ }
    \end{equation}
\end{proposition}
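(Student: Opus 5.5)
Both $W_B$ and $W'_B$ generate the same infinite jointly exchangeable law. By Proposition~\ref{PropLocalGraphon} and the uniqueness part of Theorem~\ref{ThmUniqueLocalInfiniteExchangeableMatrix}, the graphon models based on $W_B$ and on $W'_B$ both have the exchangeable distribution associated with $\xi$ on $B$. Concretely, with the same uniform arrays,
\[
  \bigl(\mathbf{1}\{U_{ij}\le W_B(U_i,U_j)\}\bigr)_{i,j\in\nbb}
  \overset{d}{=}
  \bigl(\mathbf{1}\{U_{ij}\le W'_B(U_i,U_j)\}\bigr)_{i,j\in\nbb}.
\]
The claim therefore reduces to the classical uniqueness theorem for ergodic Aldous--Hoover--Kallenberg representations of exchangeable graphs. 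Two graphons that induce the same infinite exchangeable random graph are \emph{weakly isomorphic}. By the theorem of Hoover and Kallenberg (equivalently the graph-limit form due to Borgs, Chayes, Lov\'asz; see \citet{Lovasz2012}, Corollary~10.35, or \citet[Theorem~7.28]{Kallenberg2005}), this means there are $\lambda\vert_{[0,1]}$-preserving maps $T,T'$ with $W_B\circ(T\otimes T)=W'_B\circ(T'\otimes T')$ $\lambda^{\otimes2}$-a.e. This is exactly \eqref{EqLocalGraphonEquivalence}.

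The steps, in order, are as follows.
\begin{enumerate}[label=(\roman*)]
  \item Note that equality in law of the two infinite arrays implies equality of all homomorphism densities, $t(F,W_B)=t(F,W'_B)$ for every finite simple graph $F$. Indeed, $t(F,W)$ is the probability that the leading $k\times k$ submatrix contains $F$ on the labelled vertices, via inclusion--exclusion over induced subgraph probabilities. Since both graphons are $[0,1]$-valued, no integrability issues arise.
  \item Invoke the theorem that graphons with identical homomorphism densities have cut distance zero. Then use its strengthening, that cut distance zero is equivalent to the existence of measure-preserving $T,T'$ as in \eqref{EqLocalGraphonEquivalence}. Alternatively, cite Kallenberg's equivalence criterion for representing functions of exchangeable arrays directly; in the ergodic case it gives the same conclusion.
  \item Observe that no further adjustment is needed for the zero-diagonal convention. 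The diagonal of $[0,1]^2$ is a $\lambda^{\otimes2}$-null set, so modifying a graphon there, or using the modification $\widetilde W$, does not affect the a.e.\ statement.
\end{enumerate}

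The main obstacle is step (ii). One cannot in general take $T'=\mathrm{id}$: weak isomorphism genuinely requires transformations on both sides, because measure-preserving maps need not be invertible. A self-contained proof would pass through the cut-metric compactness or a twin-free quotient argument, which is substantial. I would therefore treat this as a cited external result, as the paper does for Aldous--Hoover--Kallenberg. The only thing to check is that the hypotheses match: both graphons are measurable symmetric $[0,1]$-valued functions on the standard probability space $([0,1],\lambda)$, which holds by Proposition~\ref{PropLocalGraphon}.
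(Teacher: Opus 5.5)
Your proposal is correct and follows the paper's own argument. The paper likewise notes that $W_B$ and $W'_B$ generate the same exchangeable distribution associated with $\xi$ on $B$, and then applies Kallenberg's equivalence criterion (Theorem~7.28 of Kallenberg, 2005) to the off-diagonal laws. Your homomorphism-density and cut-distance sketch is just an alternative justification of that same cited step, and your null-diagonal remark is a check the paper leaves implicit.
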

\begin{proof}
    By definition, the infinite jointly exchangeable matrices generated by $W_B$ and $W'_B$ via \eqref{EqAldousHooverKallenbergLocalGraphon} are equal in distribution. Applying \citet[Theorem~7.28]{Kallenberg2005} to their off-diagonal laws yields \eqref{EqLocalGraphonEquivalence}.
\end{proof}

To understand how local graphons and exchangeable distributions behave under changing frames of reference, it suffices to find suitable sampling functions. The following two results outline how sampling functions for different frames of reference may be obtained from one another via conditional sampling.
\begin{proposition} \label{PropSampFuncLargeFromSmall}
    Let $B, C \in \widehat{\Bcal}(S)$ be disjoint such that $\pbb(\nu(B) = 0) < 1$ and $\pbb(\nu(C) = 0) < 1$, and let $f_B$ and $f_C$ be sampling functions for $\xi$ on $B$ and $C$, respectively. Then, for some $p \in (0,1)$,
    \begin{align*}
        f_{B \cup C}(x) \coloneqq \begin{cases}
            f_B \left( \frac{x}{p}\right), &\text{for } x \in [0, p) \\
            f_C \left( \frac{x - p}{1 - p} \right), &\text{for } x \in [p, 1]
        \end{cases}
    \end{align*}
    is a sampling function for $\xi$ on $B \cup C$.
\end{proposition}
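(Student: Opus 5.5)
The plan is to reduce the claim to an identity between probability measures on $B\cup C$. First I identify the marginal law $\eta_D$ of the i.i.d.\ locations for each window $D\in\{B,C,B\cup C\}$. Then I check that the law of $f_{B\cup C}(U)$ for $U\sim\mathrm U[0,1]$ is exactly $\eta_{B\cup C}$. By the remark after Proposition~\ref{PropLocalGraphon}, a sampling function on $D$ is any measurable $f_D:[0,1]\to D$ with $f_D(U)\sim\eta_D$. Since $f_{B\cup C}$ is built piecewise from measurable maps on the Borel pieces $[0,p)$ and $[p,1]$, it is measurable and takes values in $B\cup C$, so only its law needs checking.

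\textbf{Step 1: identify $\eta_D$.} Let $\nu$ be mixed Poisson or mixed binomial based on $\mu$. By Lemma~\ref{LemmaRestrictionMixedPoisson}, $\nu\vert_D$ is mixed binomial based on the normalisation of $\mu\vert_D$, so
\[
  \eta_D=\mu\vert_D/\mu(D).
\]
This requires $0<\mu(D)<\infty$. Finiteness holds because $D$ is bounded and $\mu$ is locally finite. Positivity follows from $\pbb(\nu(D)=0)<1$: if $\mu(D)=0$, then $\ebb\,\nu(D)=0$ (for mixed Poisson, $\ebb[\nu(D)\mid\delta]=\delta\mu(D)$; for mixed binomial, the atoms a.s.\ avoid a null set), forcing $\nu(D)=0$ a.s. Applying this to $B$ and $C$ gives $\mu(B),\mu(C)>0$.

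\textbf{Step 2: the main obstacle.} I must make sure that $\eta_D$ is well defined, that is, that every i.i.d.\ representation of $\xi\vert_D$ uses the same location law. I would argue this from the i.i.d.\ representation directly. Given $\nu\vert_D=\sum_{i\le N}\delta_{S_i}$ with $N\indep(S_i)$ and $\pbb(N\ge1)>0$, we have $\ebb\,\nu\vert_D(\cdot)=\ebb[N]\,\eta_D(\cdot)$ whenever $\ebb N<\infty$. In general, $\pbb(\nu\vert_D(\cdot)\ge1\mid N=1)=\eta_D(\cdot)$ recovers $\eta_D$ from the law of $\nu\vert_D$, since bounded exchangeability guarantees $\pbb(N=1)>0$. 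Hence $\eta_D$ is determined by $\nu$ alone and equals $\mu\vert_D/\mu(D)$.

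\textbf{Step 3: the computation.} Because $B$ and $C$ are disjoint,
\[
  \eta_{B\cup C}=p\,\eta_B+(1-p)\,\eta_C,\qquad p\coloneqq\frac{\mu(B)}{\mu(B)+\mu(C)}\in(0,1).
\]
Now let $U\sim\mathrm U[0,1]$. On $\{U<p\}$, which has probability $p$, the variable $U/p$ is conditionally $\mathrm U[0,1)$, so $f_B(U/p)\sim\eta_B$. On $\{U\ge p\}$, the variable $(U-p)/(1-p)$ is conditionally $\mathrm U[0,1]$, so $f_C\bigl((U-p)/(1-p)\bigr)\sim\eta_C$; the single point $U=1$ is null and does not matter. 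By the law of total probability, $f_{B\cup C}(U)\sim p\,\eta_B+(1-p)\,\eta_C=\eta_{B\cup C}$. Therefore $f_{B\cup C}$ is a sampling function for $\xi$ on $B\cup C$, with this explicit $p$.
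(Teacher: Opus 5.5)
Your proof is correct and follows essentially the same route as the paper: both write the location law on $B\cup C$ as the mixture $p\,\eta_B+(1-p)\,\eta_C$ with $p=\eta_{B\cup C}(B)\in(0,1)$, and both check that the two rescaled branches of $f_{B\cup C}$ sample $\eta_B$ and $\eta_C$ with probabilities $p$ and $1-p$. The only difference is that you identify each $\eta_D$ as the normalised restriction $\mu\vert_D/\mu(D)$ via Lemma~\ref{LemmaRestrictionMixedPoisson}, and you verify explicitly (your Step~2) that this location law does not depend on the chosen i.i.d.\ representation, whereas the paper obtains $\eta_B,\eta_C$ by conditioning the $B\cup C$ locations and uses that independence only implicitly.
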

\begin{proof}
    Since $B\cup C\in\widehat{\Bcal}(S)$, Proposition~\ref{PropLocalIidRep} gives an i.i.d.\ representation of $\xi\vert_{B\cup C}$; let $(S_i)$ be its vertex array and $\eta$ their marginal law, well defined as $\pbb(\nu(B\cup C)=0)<1$. Note that $\xi\vert_B=(\xi\vert_{B\cup C})\vert_B$, and conditioning the i.i.d.\ vertices on $B$ gives vertex marginal $\eta(B)^{-1}\eta\vert_B=:\eta_B$ for $\xi\vert_B$, which is the law sampled by $f_B$; similarly $\eta_C\coloneqq\eta(C)^{-1}\eta\vert_C$ is sampled by $f_C$. As $B,C$ partition the support of $\eta$, we have the mixture $\eta=\eta(B)\,\eta_B+\eta(C)\,\eta_C$. It suffices to show $f_{B\cup C}(U)\sim\eta$ for $U\sim\mathrm U[0,1]$. Set $p\coloneqq\eta(B)$. Then $p\in(0,1)$: if $p=0$ then $\eta(B)=0$, so $\xi\vert_B$ would have no vertices a.s., contradicting $\pbb(\nu(B)=0)<1$; the case $p=1$ contradicts $\pbb(\nu(C)=0)<1$ symmetrically. The maps $x\mapsto x/p$ and $x\mapsto(x-p)/(1-p)$ send $[0,p)$ and $[p,1]$ uniformly onto $[0,1]$, so $f_{B\cup C}(U)=f_B(U/p)\sim\eta_B$ on $\{U<p\}$ and $f_{B\cup C}(U)=f_C((U-p)/(1-p))\sim\eta_C$ on $\{U\ge p\}$. Hence $f_{B\cup C}(U)\sim p\,\eta_B+(1-p)\eta_C=\eta$.
\end{proof}

\begin{proposition} \label{PropSampFuncSmallFromLarge}
    Let $B, C \in \widehat{\Bcal}(S)$ be disjoint such that $\pbb(\nu(B) = 0) < 1$, and let $f_{B \cup C}$ be sampling function for $\xi$ on $B \cup C$. Then, defining $T_B : [0,1] \to [0,1]$ by
    \begin{equation*}
        T_B(x) \coloneqq \inf \left\{ y \in [0,1] \,\middle\vert\, \frac{\lambda\left( f^{-1}_{B \cup C}(B) \cap [0,y] \right)}{\lambda\left( f^{-1}_{B \cup C}(B) \right)} \geq x \right\},
    \end{equation*}
    the composition $f_{B \cup C} \circ T_B$ is a sampling function for $\xi$ on $B$.
\end{proposition}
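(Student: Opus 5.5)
Write $D\coloneqq f^{-1}_{B\cup C}(B)\subseteq[0,1]$ and $q\coloneqq\lambda(D)$. The claim reduces to showing that $f_{B\cup C}(T_B(U))\sim\eta_B$ for $U\sim\mathrm U[0,1]$, where $\eta_B$ is the vertex marginal of an i.i.d.\ representation of $\xi\vert_B$. As in the proof of Proposition~\ref{PropSampFuncLargeFromSmall}, I would take an i.i.d.\ representation of $\xi\vert_{B\cup C}$ with vertex marginal $\eta$. Conditioning the i.i.d.\ vertices on lying in $B$ shows that $\eta_B=\eta(B)^{-1}\eta\vert_B$. Since $f_{B\cup C}(U)\sim\eta$, we get $q=\eta(B)$, and $q>0$ because $\pbb(\nu(B)=0)<1$; otherwise $\xi\vert_B$ would be a.s.\ empty. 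In particular $T_B$ is well defined. It then suffices to show that $T_B(U)$ is uniformly distributed on $D$, i.e.\ $T_B(U)\sim q^{-1}\lambda\vert_D$, because then
\[
f_{B\cup C}(T_B(U))\sim q^{-1}f_{B\cup C}(\lambda\vert_D)=q^{-1}\eta\vert_B=\eta_B .
\]
The middle equality holds because $\lambda\vert_D(f^{-1}_{B\cup C}(E))=\lambda(f^{-1}_{B\cup C}(E\cap B))=\eta(E\cap B)$.

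To prove this, I would recognise $T_B$ as a generalised inverse (quantile function). Set
\[
G(y)\coloneqq q^{-1}\lambda(D\cap[0,y]) .
\]
This is the distribution function of the probability measure $\rho\coloneqq q^{-1}\lambda\vert_D$ on $[0,1]$. It is continuous, nondecreasing, and satisfies $G(0)=0$ and $G(1)=1$. By definition $T_B(x)=\inf\{y: G(y)\ge x\}$ is the quantile function of $\rho$. The standard quantile transform then gives $T_B(U)\sim\rho$, via the equivalence $T_B(x)\le y\iff x\le G(y)$, which uses right-continuity of $G$. Measurability of $T_B$ follows from monotonicity.

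The main obstacle is only a minor technical point. Measurability of $D$ requires $f_{B\cup C}$ to be measurable, which holds by Lemma~\ref{LemTransfer}. Beyond that, one must check that the quantile equivalence holds at the endpoint $x=0$, where $T_B(0)=0$ might lie outside $D$. This is a null event and does not affect the law. One could also remark that $T_B$ is $\lambda$-a.e.\ valued in $D$, since $G$ is flat off $D$, but this is not needed for the distributional argument. Combining these steps shows that $f_{B\cup C}\circ T_B$ is a sampling function for $\xi$ on $B$.
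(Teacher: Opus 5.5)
Your proposal is correct and follows the paper's proof. Both arguments note that $\lambda(f^{-1}_{B\cup C}(B))=\eta(B)>0$ because $\pbb(\nu(B)=0)<1$, recognise $T_B$ as the quantile function of the normalised uniform law on $f^{-1}_{B\cup C}(B)$, and push $T_B(U)$ forward by $f_{B\cup C}$ to obtain $\eta(B)^{-1}\eta\vert_B$. You simply spell out the quantile-transform step that the paper leaves implicit (your endpoint worry is unnecessary, since $T_B(x)\le y \iff x\le G(y)$ also holds at $x=0$, and measurability of an arbitrary sampling function is part of its definition rather than a consequence of Lemma~\ref{LemTransfer}).
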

\begin{proof}
    Let $\eta$ be the law of $f_{B\cup C}(U)$ for $U\sim\mathrm U[0,1]$; then $f_{B\cup C}$ pushes $\lambda\vert_{[0,1]}$ to $\eta$, so $\lambda(f^{-1}_{B\cup C}(B))=\eta(B)>0$, the positivity from $\pbb(\nu(B)=0)<1$. A map $f_B$ is a sampling function for $\xi$ on $B$ if and only if $f_B(U)\sim\eta(B)^{-1}\eta\vert_B$. Now $T_B$ is the quantile function of the uniform law on $f^{-1}_{B\cup C}(B)$, so $T_B(U)$ is uniform on $f^{-1}_{B\cup C}(B)$; pushing this forward by $f_{B\cup C}$ yields $\eta$ conditioned on $B$, namely $\eta(B)^{-1}\eta\vert_B$. Hence $f_{B\cup C}\circ T_B$ is a sampling function for $\xi$ on $B$.
\end{proof}

Next, we show that a jointly exchangeable matrix $A^{B \cup C}$ associated with $\xi\vert_{B \cup C}$ is a composition of two jointly exchangeable matrices, $A^B$ and $A^C$, associated with $\xi\vert_B$ and $\xi\vert_C$, respectively, and a separately exchangeable (see \citet{Kallenberg2005}) matrix $A^{BC}$ describing the interactions. 
\begin{proposition}
    Let $B, C \in \widehat{\Bcal}(S)$ be disjoint such that $\pbb(\nu(B) = 0) < 1$ and $\pbb(\nu(C) = 0) < 1$, and let $A^{B \cup C}$ be a jointly exchangeable matrix associated with $\xi$ on $B \cup C$. Then there exist a random partition of $\nbb$ into subsequences $(b_i)$ and $(c_i)$ such that a.s.\
    \begin{align*}
        A^B_{ij} &\coloneqq A^{B \cup C}_{b_ib_j}, \\
        A^C_{ij} &\coloneqq A^{B \cup C}_{c_ic_j}
    \end{align*}
    are jointly exchangeable matrices associated with $\xi$ on $B$ and $C$, respectively. Moreover, the matrix $A^{BC}_{ij} \coloneqq A^{B \cup C}_{b_ic_j}$ is separately exchangeable.
\end{proposition}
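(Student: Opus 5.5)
Since $A^{B\cup C}$ is unique only in distribution, I will first replace it by a convenient realization and then transfer. By Proposition~\ref{PropLocalGraphon} and its proof, $A^{B\cup C}$ can be taken a.s.\ equal to
\[
A^{B\cup C}_{ij}=\mathbf{1}\{i\ne j\}\mathbf{1}\{U_{ij}\le W(S_i,S_j)\},
\]
where $S_i=f_{B\cup C}(U_i)$ are i.i.d.\ with law $\eta$ on $B\cup C$. If the given matrix is only equal in distribution to this one, I will use the transfer theorem \citep[Corollary~8.18]{Kallenberg2021} to construct $(U_i)$, $(U_{ij})$ jointly with the given $A^{B\cup C}$ so that the identity holds almost surely; the construction below uses only these auxiliary variables.

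Next I define the random partition. Set $p\coloneqq\eta(B)\in(0,1)$, which holds as in the proof of Proposition~\ref{PropSampFuncLargeFromSmall}, and let $I_B\coloneqq\{i: S_i\in B\}$ and $I_C\coloneqq\{i:S_i\in C\}$. Since $\eta(B\cup C)=1$, these sets partition $\nbb$ a.s. By the strong law, both are infinite a.s. Let $b_1<b_2<\dots$ enumerate $I_B$ and $c_1<c_2<\dots$ enumerate $I_C$.

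The key step is the law of the resulting subsequences. The sequences $(S_{b_i})$ and $(S_{c_i})$ are independent and i.i.d., with laws $\eta_B=\eta(B)^{-1}\eta\vert_B$ and $\eta_C$ respectively. Moreover, conditionally on all $(S_k)$, the variables $U_{b_ib_j}$ and $U_{c_ic_j}$ remain i.i.d.\ uniform. To verify this, I condition on the indicator sequence $(\mathbf{1}\{S_k\in B\})_k$. Given this sequence, the $S_k$ are independent with the corresponding conditional laws, and the index maps $b$ and $c$ are measurable functions of the indicators. Hence for fixed $n$ the finite-dimensional laws of $(S_{b_i})_{i\le n}$ and of the relabelled uniforms coincide with those of an i.i.d.\ $\eta_B$ sample together with fresh uniforms. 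This conditioning step is the main technical point and requires care about the stopping-time-like nature of the random indices.

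It follows that $A^B_{ij}=\mathbf{1}\{i\ne j\}\mathbf{1}\{U_{b_ib_j}\le W(S_{b_i},S_{b_j})\}$ has exactly the law constructed in Theorem~\ref{ThmUniqueLocalInfiniteExchangeableMatrix} for $\xi\vert_B$. Here the vertex marginal of $\xi\vert_B$ is $\eta_B$, by the argument in Proposition~\ref{PropSampFuncLargeFromSmall}. So by uniqueness $A^B$ is a jointly exchangeable matrix associated with $\xi$ on $B$, and the same argument applies to $A^C$.

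Finally, for the interaction matrix, $A^{BC}_{ij}=\mathbf{1}\{U_{b_ic_j}\le W(S_{b_i},S_{c_j})\}$, with no diagonal issue since $b_i\ne c_j$. The same argument shows this has the law of $\mathbf{1}\{V_{ij}\le W(X_i,Y_j)\}$, where $X_i$ are i.i.d.\ $\eta_B$, $Y_j$ are i.i.d.\ $\eta_C$, and the $V_{ij}$ are i.i.d.\ uniform, all independent. That array is invariant under separate permutations $\pi$ of rows and $\sigma$ of columns, because $(X_{\pi(i)})$, $(Y_{\sigma(j)})$ and $(V_{\pi(i)\sigma(j)})$ have the same joint law as the originals. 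Hence $A^{BC}$ is separately exchangeable.
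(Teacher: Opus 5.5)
Your proposal is correct and follows essentially the same route as the paper: you realize $A^{B\cup C}$ through a sampling function $f_{B\cup C}$, split the indices according to whether the latent location falls in $B$ or $C$, use that the selected subsequences are independent i.i.d.\ samples from the conditional laws while the relabelled marks stay i.i.d.\ uniform, and conclude by the uniqueness in Theorem~\ref{ThmUniqueLocalInfiniteExchangeableMatrix}. The differences are only in presentation: you make explicit the coupling of the given matrix with the auxiliary uniforms and the separate-permutation invariance of $A^{BC}$, and you work with $\eta_B$ directly rather than through $T_B$ and Proposition~\ref{PropSampFuncSmallFromLarge}.
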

\begin{proof}
    Let $f_{B\cup C}$ be a sampling function for $\xi$ on $B\cup C$. Then there exists a representation 
    \[
        A^{B\cup C}_{ij} \overset{a.s.}{=} \mathbf{1}\bigl\{U_{ij}\le \widetilde{W}(f_{B \cup C}(U_i), f_{B \cup C}(U_j))\bigr\}, \qquad i,j \in \nbb.
    \]
    For $k\in\nbb$, let $b_k$ be the index of the $k$-th element of $(U_i)$ lying in $f^{-1}_{B\cup C}(B)$, and define $c_k$ analogously with $f^{-1}_{B\cup C}(C)$. Since $f_{B\cup C}$ maps into $B\cup C$, these preimages partition $[0,1]$ up to a null set, so $(b_i)$ and $(c_i)$ partition $\nbb$ and are each a.s.\ infinite. The subsequence of $(U_i)$ in $f^{-1}_{B\cup C}(B)$ is i.i.d.\ uniform on $f^{-1}_{B\cup C}(B)$, so $(U_{b_i})\overset{d}{=}(T_B(U_i))$; as $(U_{ij})\perp(U_i)$ and $(b_i)$ is $(U_i)$-measurable, the marks $(U_{b_ib_j})$ remain i.i.d.\ uniform. Hence, Proposition~\ref{PropSampFuncSmallFromLarge} shows that
    \[
        A^B_{ij} \overset{a.s.}{=} \mathbf{1}\bigl\{U_{b_ib_j}\le \widetilde{W}(f_{B\cup C}(U_{b_i}),f_{B\cup C}(U_{b_j}))\bigr\}, \qquad i,j \in \nbb
    \]
    is a jointly exchangeable matrix associated with $\xi$ on $B$; the argument for $A^C$ is analogous. Finally, $(b_i)$ and $(c_j)$ select disjoint, hence independent, sub-arrays of $(U_i)$, so
    \[
        (f_{B\cup C}(U_{b_i}),f_{B\cup C}(U_{c_j}))\overset{d}{=}(T_B(U_i),T_C(U'_j))
    \]
    for an independent i.i.d.\ array $(U'_j)$; as $(U_{b_ic_j})$ is independent of both groups, $A^{BC}$ is separately exchangeable.
\end{proof}
For more background on separately exchangeable arrays, see \citet{Kallenberg2005}.

\section{Graph Processes on Riemannian Manifolds} \label{SectionRiemannianManifolds}

We now specialise to graph processes on a connected, geodesically complete $d$-dimensional Riemannian manifold $(M,g)$, on which the Riemannian volume measure $\lambda_g$ and the Riemannian distance $d_g$ furnish natural notions of volume and distance. These single out a geometrically natural class of graph processes: the vertices form a Poisson process with intensity measure $\delta\lambda_g$, where $\delta>0$, and the connection kernel is distance-based,
\[
    W(x,y)=h(d_g(x,y)), \quad x,y \in M,
\]
where $h \colon \rbb_+ \to [0,1]$ is measurable and non-increasing. This models the assumption that nearby distinct vertices are more likely to be connected, and yields a manifold-valued analogue of the random connection model \citep{MeesterPenrose1997}. For this class we prove a scale-invariance theorem (Theorem~\ref{ThmScaleInvariance}): for every $c > 0$, the simultaneous rescaling
\[
  g' = c^2 g, \qquad \delta' = c^{-d}\delta, \qquad h'(r) = h(r/c)
\]
leaves the law of the induced graph process invariant. This theorem lets us normalise one of the parameters without loss of generality. The special case 
\[
    h(r) = \mathbf{1}_{[0,t]}(r), \quad r \in \rbb_+,
\]
is called the \textit{threshold model} on $M$ with density $\delta > 0$ and threshold $t > 0$. Here, two distinct vertices are adjacent exactly when $d_g(x,y) \le t$; this is the manifold-valued analogue of the random plane network of \citet{Gilbert1961} and the object of study in the remaining sections. 

We begin with a few general observations. As is standard, manifolds are taken to be second-countable and Hausdorff. Any two points of a connected smooth manifold can be joined by a piecewise regular curve of finite length, so for $x, y \in M$ the \textit{Riemannian distance}
\[
  d_g(x, y) \coloneqq \inf\bigl\{ \operatorname{length}(\gamma) :
    \gamma \text{ a piecewise regular curve from } x \text{ to } y \bigr\}
\]
on $M$ induced by $g$ is finite and well-defined. Indeed, $d_g$ is a metric on $M$ whose induced topology coincides with the manifold topology. Being second-countable, $M$ is separable, so $(M, d_g)$ is a separable metric space; geodesic completeness, via the Hopf--Rinow theorem, renders it complete and yields that closed and $d_g$-bounded subsets of $M$ are compact. Hence $(M, d_g)$ is a complete separable metric space, and we may take $\widehat{\Bcal}(M)$ to be the ring of $d_g$-bounded Borel sets. For details we refer to \citet{Lee2018}. 

The next lemma constructs the \textit{Riemannian volume measure} $\lambda_g$ on $M$, the intrinsic volume measure induced by $g$, which we use to define uniform distributions on Borel subsets of $M$ of finite volume. These in turn single out a geometrically natural class of graph processes on $M$.
\begin{lemma} \label{LemRiemannLebesgueMeasure}
    There exists a unique Borel measure $\lambda_g$ on $M$ such that for every chart $(V, \varphi)$ of $M$, with $\varphi = (x^1, \dots, x^d)$, and every $B \in \Bcal(M)$ with $B \subseteq V$,
    \begin{equation} \label{EqRiemannLebesgueMeasure}
        \lambda_g(B) = \int_{\varphi(B)} \varphi_{\ast}\sqrt{\abs{G}} \; d\lambda_d,
    \end{equation}
    where $G \coloneqq \det[g_{jk}]$ is the determinant of the metric components $g_{jk} \coloneqq g\!\left(\frac{\partial}{\partial x^j}, \frac{\partial}{\partial x^k}\right)$ in the chart $(V, \varphi)$, and $\varphi_{\ast}\sqrt{\abs{G}} = \sqrt{\abs{G}} \circ \varphi^{-1}$ denotes the pushforward of $\sqrt{\abs{G}}$ on $V$ along $\varphi$. Moreover, $\lambda_g$ is diffuse, $\sigma$-finite, and satisfies $\lambda_g(B) < \infty$ for every $B \in \widehat{\Bcal}(M)$.
\end{lemma}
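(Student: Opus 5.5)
The plan is the standard partition-of-unity construction, followed by separate checks of diffuseness, $\sigma$-finiteness and local finiteness. First we verify that the right-hand side of \eqref{EqRiemannLebesgueMeasure} is chart-independent. If $(V,\varphi)$ and $(\tilde V,\tilde\varphi)$ are overlapping charts, then on $V\cap\tilde V$ the metric components transform as $\tilde g = J^{\top} g J$. Here $J$ is the Jacobian of the transition map $\varphi\circ\tilde\varphi^{-1}$. Hence $\sqrt{|\tilde G|}=\sqrt{|G|}\,|\det J|$, and the change-of-variables formula for $\lambda_d$ shows that both charts assign the same value to any Borel $B\subseteq V\cap\tilde V$. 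For a single chart, $B\mapsto\int_{\varphi(B)}\varphi_\ast\sqrt{|G|}\,d\lambda_d$ is a measure on $\Bcal(V)$, because $\varphi$ is a homeomorphism (so Borel sets correspond) and $\sqrt{|G|}$ is continuous and positive.

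For existence, we use second countability to choose a countable atlas $(V_k,\varphi_k)_{k\ge1}$. We disjointify it by setting $D_k\coloneqq V_k\setminus\bigcup_{l<k}V_l$ and define
\[
\lambda_g(B)\coloneqq\sum_{k\ge1}\int_{\varphi_k(B\cap D_k)}(\varphi_k)_\ast\sqrt{|G_k|}\;d\lambda_d .
\]
This is countably additive as a sum of measures. If $B\subseteq V$ for some chart $(V,\varphi)$, the chart-independence step applied on each piece $B\cap D_k\subseteq V\cap V_k$ gives \eqref{EqRiemannLebesgueMeasure}. For uniqueness, any Borel measure satisfying \eqref{EqRiemannLebesgueMeasure} must agree with this one on every $B\cap D_k$, and therefore on every $B$ by countable additivity. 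Alternatively, one can use a smooth partition of unity subordinate to the atlas; either route works, and I would take the disjointification because it avoids integrating forms.

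For the remaining properties: diffuseness holds because a singleton $\{x\}\subseteq V$ satisfies $\lambda_d(\varphi(\{x\}))=0$. For local finiteness, by Hopf--Rinow a bounded Borel $B$ has compact closure $\overline B$. We cover $\overline B$ by finitely many charts and shrink each to a precompact coordinate ball whose closure lies inside a larger chart. On such a ball, $\sqrt{|G|}$ is continuous on a compact set, hence bounded, and the image has finite Lebesgue measure; so each piece has finite measure and $\lambda_g(B)<\infty$. Finally, $\sigma$-finiteness follows because $M=\bigcup_n B(x_0,n)$ is a countable union of bounded sets.

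The main obstacle is the chart-independence computation together with the measurability bookkeeping, namely that Borel sets pass through charts and that restrictions glue consistently. The key identity is $\sqrt{|\tilde G|}=|\det J|\sqrt{|G|}$, after which the change-of-variables theorem for diffeomorphisms between open subsets of $\rbb^d$ does the work. Everything else is routine. I would also cite \citet{Lee2018} for the standard construction rather than repeat the calculation in full.
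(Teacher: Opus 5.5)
Your proof is correct, but it is more self-contained than the paper's.

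The paper does not construct $\lambda_g$ at all. It cites Chapter~XII.1 of \citet{Escher2008} for existence and uniqueness, and takes diffuseness, the Radon property and $\sigma$-compactness of $M$ from Theorem~XII.1.7 there. From the Radon property it gets finiteness on compacts via a finite subcover. It then gets local finiteness on $\widehat{\Bcal}(M)$ from Hopf--Rinow exactly as you do, and $\sigma$-finiteness from $\sigma$-compactness.

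You instead build the measure from a disjointified countable atlas. The identity $\sqrt{\lvert\tilde G\rvert}=\lvert\det J\rvert\sqrt{\lvert G\rvert}$ plus change of variables gives chart-independence. That yields existence and uniqueness in one stroke and avoids partitions of unity. Your bound on $\sqrt{\lvert G\rvert}$ over compact coordinate balls is a proof of the finiteness-on-compacts that the paper imports. The paper's version is shorter. Yours shows explicitly that the defining formula is consistent across charts, which is the only nontrivial content of the existence claim.

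There is one small difference. You deduce $\sigma$-finiteness from local finiteness on the balls $B(x_0,n)$, so it rests on geodesic completeness. The paper's $\sigma$-compactness argument, or your own countable family of precompact coordinate balls, gets it from second countability alone. Completeness is assumed in this section, so this costs nothing here.

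In the local-finiteness step, fix the order of operations. First choose a precompact coordinate ball around each point of $\overline B$, then extract a finite subcover. Shrinking an already chosen finite cover by charts need not still cover $\overline B$.
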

\begin{proof}
    Existence and uniqueness of $\lambda_g$ are proved in Chapter XII.1 of \citet{Escher2008}. By Theorem~XII.1.7(iii), $\lambda_g$ is diffuse. By XII.1.7(ii), $\lambda_g$ is Radon, so every $x \in M$ has an open neighborhood $V_x$ with $\lambda_g(V_x) < \infty$. If $K \subseteq M$ is compact, finitely many $V_x$ cover $K$, so $\lambda_g(K) < \infty$. By XII.1.7(i), $M$ is $\sigma$-compact, so $\lambda_g$ is $\sigma$-finite. Finally, if $B \in \widehat{\Bcal}(M)$, then $B$ is $d_g$-bounded, hence so is $\overline{B}$; being closed and bounded, $\overline{B}$ is compact by Hopf--Rinow, and therefore $\lambda_g(B) \le \lambda_g(\overline{B}) < \infty$.
\end{proof}
For $B \in \Bcal(M)$ with $0 < \lambda_g(B) < \infty$, we call a random element of $M$ with distribution $\lambda_g(B)^{-1}\,\lambda_g\vert_B$ \textit{$g$-uniformly distributed} on $B$. For details we refer to \citet{Escher2008}.

As described at the beginning of the section, let $\xi$ be a Poisson graph process on $M$ with intensity measure $\delta\lambda_g$ for some $\delta > 0$, connected by the kernel $W \colon (x,y) \mapsto h(d_g(x,y))$. This setup possesses a scale invariance, which we now make precise.
\begin{theorem}[Scale invariance] \label{ThmScaleInvariance}
    Let $\xi$ be the Poisson graph process defined above. For any $c > 0$, let $\xi'$ be obtained from $\xi$ by the transformation
    \[
        g' = c^2 g, \qquad \delta' = c^{-d}\delta, \qquad h'(r) = h(r/c).
    \]
    Then $\xi' \overset{d}{=} \xi$.
\end{theorem}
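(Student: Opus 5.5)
The plan is to show that the two ingredients determining the law of a graph process, the law of the directing point process and the connection kernel, are each unchanged by the transformation. The underlying set $M$, its topology and Borel $\sigma$-algebra stay the same, and $\xi'$ is a random adjacency measure on the same space $M^2\times\{0,1\}$. So it suffices to check that the conditional construction in Definition~\ref{DefGraphProcess} produces identical laws.

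First I verify the two scaling identities. For the distance: a piecewise regular curve $\gamma$ has $g'$-length $\int \sqrt{c^2 g(\dot\gamma,\dot\gamma)} = c\cdot\operatorname{length}_g(\gamma)$. Taking the infimum gives $d_{g'} = c\,d_g$, and hence $h'(d_{g'}(x,y)) = h(d_g(x,y))$, so $W'=W$ pointwise on $M^2$. Since $h'$ is measurable and non-increasing, $\xi'$ lies in the same model class. For the volume: in any chart $(V,\varphi)$ the metric components satisfy $g'_{jk} = c^2 g_{jk}$, so $\det[g'_{jk}] = c^{2d}\det[g_{jk}]$ and $\sqrt{|G'|} = c^d\sqrt{|G|}$. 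By \eqref{EqRiemannLebesgueMeasure}, $\lambda_{g'}(B) = c^d\lambda_g(B)$ for every Borel $B$ contained in a chart domain. The uniqueness clause of Lemma~\ref{LemRiemannLebesgueMeasure} then gives $\lambda_{g'} = c^d\lambda_g$ on all of $M$: the measure $c^d\lambda_g$ satisfies the defining chart property for $g'$. Consequently $\delta'\lambda_{g'} = \delta\lambda_g$.

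Second, I note that $d_{g'}$ and $d_g$ have the same bounded sets, so $\widehat{\Bcal}(M)$, local finiteness and the state space are unchanged. A Poisson process is determined in law by its intensity measure, so the directing processes satisfy $\nu' \overset{d}{=} \nu$. I then realise both on a common enumeration $(N,(S_i))$ via \citet[Lemma~1.6]{Kallenberg2017}. Conditionally on it, the marks $A_{ij}$, $i<j$, are independent Bernoulli with parameters $W'(S_i,S_j)=W(S_i,S_j)$, and $A_{ii}=0$. The joint law of $(N,(S_i),(A_{ij}))$ therefore coincides for $\xi$ and $\xi'$, and pushing forward by the measurable map to $\sum_{i,j\le N}\delta_{(S_i,S_j,A_{ij})}$ gives $\xi'\overset{d}{=}\xi$.

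The main obstacle is this last step. The law of a graph process must be determined by the law of $\nu$ together with $W$, independently of the chosen measurable enumeration. I would handle it as in the proof of Proposition~\ref{PropLocalIidRep}: the mark construction $\mathbf 1\{U_{ij}\le W(S_i,S_j)\}$ applied to a fixed measurable enumeration of $\nu$ yields the law in question. Any two enumerations differ by a random permutation, and the conditional independence structure is invariant under such a permutation, much as in the fibre argument of Theorem~\ref{ThmUniqueLocalInfiniteExchangeableMatrix}. The geometric scaling identities themselves are routine.
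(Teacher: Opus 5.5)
Your proof is correct. It rests on the same two scaling identities as the paper's: $d_{g'}=c\,d_g$, so $h'\circ d_{g'}=h\circ d_g$, and $\lambda_{g'}=c^d\lambda_g$, so $\delta'\lambda_{g'}=\delta\lambda_g$. The difference is that you argue globally rather than through bounded windows.

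The paper fixes $B\in\widehat{\Bcal}(M)$ and uses Proposition~\ref{PropRestrictionOfGraphProcess} and Lemma~\ref{LemmaRestrictionMixedPoisson} to describe $\xi\vert_B$ and $\xi'\vert_B$ by a $\operatorname{Poisson}(\delta\lambda_g(B))$ count with i.i.d.\ $g$-uniform locations. It matches these, and then passes from $\xi\vert_B\overset{d}{=}\xi'\vert_B$ for all bounded $B$ to the global identity. This keeps everything finite and inside the i.i.d.-representation framework of Section~\ref{SectionGraphProcesses}. You instead compare the full intensity measures and use that a Poisson process is determined in law by its intensity. That route is more direct and avoids both the restriction lemma and the final passage from bounded windows to the whole process.

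You are also more careful at two points the paper treats quickly:
\begin{itemize}
\item You use the uniqueness clause of Lemma~\ref{LemRiemannLebesgueMeasure} to extend $\lambda_{g'}=c^d\lambda_g$ from chart domains to all of $\Bcal(M)$.
\item You note that $d_g$ and $d_{g'}$ have the same bounded sets, so $\widehat{\Bcal}(M)$ is unchanged.
\end{itemize}

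The step you flag as the main obstacle is used silently by the paper too. This is the claim that the law of a graph process depends only on the law of its directing process and its kernel, not on the enumeration. It appears both here and in the proof of Proposition~\ref{PropLocalIidRep}, so citing that proof does not settle it. Your random-permutation argument does settle it, and it needs nothing like the fibre argument of Theorem~\ref{ThmUniqueLocalInfiniteExchangeableMatrix}:
\begin{itemize}
\item The permutation matching a given enumeration $(S_i)$ to a measurable enumeration $(\tau_k)$ of $\nu$ is $\sigma(N,(S_i))$-measurable.
\item So, conditionally on $(N,(S_i))$, the relabelled marks are independent $\mathrm{Bernoulli}(W(\tau_k,\tau_l))$.
\item These parameters are $\sigma(N,(\tau_k))$-measurable, so the same conditional law holds given $(N,(\tau_k))$.
\end{itemize}
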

\begin{proof}
    Consider $B \in \widehat{\Bcal}(M)$. By Proposition~\ref{PropRestrictionOfGraphProcess} and Lemma~\ref{LemmaRestrictionMixedPoisson}, $\xi\vert_B$ is directed by $\sum_{i \leq N} \delta_{X_i}$ with $N \sim \operatorname{Poisson}(\delta\lambda_g(B))$ and $(X_i) \indep N$ i.i.d.\ $g$-uniform on $B$, and is connected by $W = h \circ d_g$; likewise $\xi'\vert_B$ is directed by $\sum_{i \leq N'} \delta_{X'_i}$ with $N' \sim \operatorname{Poisson}(\delta'\lambda_{g'}(B))$ and $(X'_i) \indep N'$ i.i.d.\ $g'$-uniform points, and
    connected by $h' \circ d_{g'}$.
    
    Since $g' = c^2 g$, the metric components scale as $g'_{jk} = c^2 g_{jk}$, so the determinant in Lemma~\ref{LemRiemannLebesgueMeasure} satisfies $G' = c^{2d} G$ and $\sqrt{\abs{G'}} = c^d \sqrt{\abs{G}}$. Hence, by \eqref{EqRiemannLebesgueMeasure}, $\lambda_{g'} = c^d \lambda_g$ on any chart $(V, \varphi)$, and hence on $\Bcal(M)$. In particular the $g$- and $g'$-uniform laws on $B$ coincide, and
    \[
        \delta' \lambda_{g'}(B) = c^{-d}\delta \cdot c^d \lambda_g(B) = \delta \lambda_g(B),
    \]
    so the two directing measures share their count and location laws, hence agree in distribution.

    For the kernels, $g' = c^2 g$ scales lengths by $c$, so $d_{g'} = c\, d_g$ and
    \[
        h' \circ d_{g'} = h'(c\, d_g) = h(d_g) = h \circ d_g
    \]
    by $h'(r) = h(r/c)$. Thus $\xi\vert_B$ and $\xi'\vert_B$ share their directing law and connection kernel, so $\xi\vert_B \overset{d}{=} \xi'\vert_B$. As $B \in \widehat{\Bcal}(M)$ was arbitrary, $\xi' \overset{d}{=} \xi$.
\end{proof}

For the remainder of the paper we focus on the threshold model on $(M,g)$. Because its edges are determined by the vertex locations, much of the subsequent analysis simplifies. The threshold model also shows that boundedly exchangeable graph processes need not be globally exchangeable, which is the content of the next result.
\begin{proposition} \label{PropThresholdModelNotExchangeable}
    Let $(M,g)$ satisfy $\lambda_g(M) = \infty$. A threshold model $\xi$ on $M$ with density $\delta > 0$ and threshold $t > 0$ admits no
    representation~\eqref{EqRandomAdjacencyMeasure} whose adjacency matrix is
    jointly exchangeable.
\end{proposition}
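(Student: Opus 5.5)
Assume, for contradiction, that $\xi \overset{a.s.}{=} \sum_{i,j\le N}\delta_{(S_i,S_j,A_{ij})}$ is a representation \eqref{EqRandomAdjacencyMeasure} whose adjacency matrix $A$ is jointly exchangeable. Since $\lambda_g(M)=\infty$ and $\lambda_g$ is diffuse and $\sigma$-finite (Lemma~\ref{LemRiemannLebesgueMeasure}), the directing Poisson process has infinitely many atoms almost surely. Hence $N=\infty$ a.s. and $A$ is an infinite jointly exchangeable adjacency matrix. By the Aldous--Hoover--Kallenberg representation \eqref{EqAldousHooverKallenbergRepresentation}, $A$ is a mixture of ergodic graphon models. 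Conditionally on $U_\emptyset$, the expected degree of vertex $1$ is therefore $\sum_{j\ge2}\ebb[w(U_\emptyset,U_1,U_j)\mid U_\emptyset,U_1]$, a sum of i.i.d.-in-$j$ terms. Writing $d_1(U_\emptyset,U_1)\coloneqq\int_0^1 w(U_\emptyset,U_1,u)\,du$, the strong law of large numbers gives
\[
\frac1n\sum_{j\le n}A_{1j}\to d_1(U_\emptyset,U_1)\quad\text{a.s.}
\]
The degree of vertex~$1$ in the global graph is thus almost surely either $0$ (when $d_1=0$) or infinite (when $d_1>0$).

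The next step is to show that the threshold model contradicts this. Every vertex of the threshold model has finite degree a.s.: the neighbours of $S_1$ lie in the closed ball $\overline B(S_1,t)$. This ball is $d_g$-bounded, so $\nu(\overline B(S_1,t))<\infty$ a.s. by local finiteness of the Poisson process (Lemma~\ref{LemRiemannLebesgueMeasure} gives $\lambda_g(\overline B(x,t))<\infty$). To make this rigorous for the random atom $S_1$, I would use that almost surely \emph{all} atoms have finite degree. Indeed, cover $M$ by countably many bounded sets $B_k$; the $t$-neighbourhood of each is bounded and carries finitely many atoms a.s. Hence every row of $A$ has finitely many ones a.s., so $d_1=0$ a.s. Applying exchangeability to every index shows $\ebb[A_{ij}]=\ebb[\int\!\!\int w(U_\emptyset,u,v)\,du\,dv]=\ebb[\int d_1(U_\emptyset,u)\,du]=0$, so $A\equiv0$ a.s.: the graph would be empty.

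It remains to show that the threshold model is nonempty with positive probability, which gives the contradiction. Pick any $x\in M$ and let $B\coloneqq B(x,t/2)$, an open ball with $0<\lambda_g(B)<\infty$. Positivity holds because $\lambda_g$ has a positive continuous density in charts by \eqref{EqRiemannLebesgueMeasure}, and open sets have positive measure. Then $\pbb(\nu(B)\ge2)>0$ for the Poisson count with mean $\delta\lambda_g(B)$. Any two atoms in $B$ are within distance $<t$, so they are adjacent with $W=1$, and $\pbb(\text{some }A_{ij}=1)>0$.

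The main obstacle is the step from ``each vertex has finite degree'' to ``$d_1=0$ a.s.''. One must check that the almost-sure convergence of row averages in an infinite jointly exchangeable array holds under the mixture (conditioning on $U_\emptyset$, $U_1$, the variables $A_{1j}$, $j\ge2$, are i.i.d.\ Bernoulli$(d_1)$), and that no measurability issue arises in identifying the abstract labels with the Poisson atoms. Both points are routine once $A$ is realised almost surely via \eqref{EqAldousHooverKallenbergRepresentation}, as the paper permits.
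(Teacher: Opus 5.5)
Your proof is correct and follows essentially the same route as the paper. The first row of $A$ is conditionally i.i.d.\ Bernoulli: the paper gets this from de Finetti applied to $(A_{1,j+1})_{j\ge1}$ together with the second Borel--Cantelli lemma, while you condition on $(U_\emptyset,U_1)$ in the Aldous--Hoover--Kallenberg representation and use the strong law, which is the same structure with a heavier tool. Finiteness of degrees then forces the mixing parameter to vanish, the graph is a.s.\ edgeless, and this contradicts the positive probability of two adjacent atoms in a ball of radius $t/2$. Your countable-cover argument, showing that all atoms simultaneously have finite degree, is a welcome detail the paper leaves implicit.
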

\begin{proof}
    Since $\lambda_g(M) = \infty$, the directing Poisson process has infinitely many atoms a.s., so any representation \eqref{EqRandomAdjacencyMeasure} has an infinite adjacency matrix. Suppose such a representation has $(A_{ij})$ jointly exchangeable, and set $X_j \coloneqq A_{1,\,j+1}$ for $j \ge 1$. Permutations of $\nbb$ fixing $1$ act on $A$ as joint permutations and send the first row to a permutation of itself, so $(X_j)_{j\ge1}$ is an infinite exchangeable $\{0,1\}$-sequence. By de Finetti's theorem there is a random $\Theta \in [0,1]$ such that, conditionally on $\Theta$, the $X_j$ are i.i.d.\ $\mathrm{Bernoulli}(\Theta)$.

    The degree of vertex $1$ is $D_1 = \sum_{j\ge1} X_j$, which is finite a.s.\ by local finiteness. Conditionally on $\Theta = \theta$ with $\theta > 0$, the $X_j$ are i.i.d.\ with $\sum_j \pbb(X_j = 1) = \infty$, so the second Borel--Cantelli lemma gives $D_1 = \infty$ a.s., a contradiction. Hence $\Theta = 0$ a.s., so $\pbb(A_{12} = 1) = 0$; by joint exchangeability every pair maps to $(1,2)$ under a permutation, whence $\pbb(A_{ij} = 1) = 0$ for all $i \ne j$. By countability the graph is a.s.\ edgeless.

    But for any $x_0 \in M$, the ball $B_{t/2}(x_0)$ has positive volume, so with positive probability it contains at least two atoms --- which are then within distance $t$ and hence adjacent. This contradicts a.s.\ edgelessness, so no jointly exchangeable representation exists.
\end{proof}

\section{Applications} \label{SectionApplications}

In this section we treat examples of the threshold model on a homogeneous Riemannian manifold $(M,g)$ with $\lambda_g(M) = \infty$. On such a manifold we first show that the threshold model produces sparse graphs (Proposition~\ref{PropConvSparse}) --- a regime that graphon models, by the dichotomy recalled in Section~\ref{SubsectionGraphonComp}, cannot attain. We then derive the limiting degree distribution and clustering coefficient of the infinite random graph $G$ associated with a threshold model $\xi$. Euclidean and hyperbolic space are subsequently treated in detail: by the Killing--Hopf theorem, the complete, simply connected Riemannian manifolds of constant sectional curvature are, up to isometry and in each dimension, the sphere, Euclidean space, and hyperbolic space, and the infinite-volume assumption excludes the compact spherical case. Our examples concentrate on the $2$-dimensional case, since exact formulas become unwieldy in higher dimensions. In particular, we show that while the degree law is fixed by the expected number of points in a connection ball, curvature shifts the clustering coefficient independently of the degree distribution.

Here, a connected Riemannian manifold is called \textit{homogeneous} if its group of isometries acts transitively on it; this implies geodesic completeness (see, e.g., \cite{Lee2018}), so we remain in the setting of Section~\ref{SectionRiemannianManifolds}. Since isometries preserve $\lambda_g$ and map metric balls onto metric balls, the volume of the closed ball $B_t(x_0) \subseteq M$ centred at $x_0 \in M$ with radius $t > 0$ does not depend on $x_0$, and we write $V_t \coloneqq \lambda_g(B_t(x_0))$. For the same reason, the probability that two independent $g$-uniform points in $B_t(x_0)$ lie at distance at most $t$ from each other is independent of $x_0$; for reasons that will become apparent, we call it the \textit{clustering tendency} of $(M, g)$ at radius $t$, denoted by $p_t$.

Throughout this section, let $\xi$ be a threshold model on $(M,g)$ with density $\delta > 0$ and threshold $t > 0$, and let $G$ be the random graph associated with $\xi$; as before, $v(G)$ and $e(G)$ denote its vertex and edge set. Since $\lambda_g(M) = \infty$, the graph $G$ is almost surely infinite, and it is not immediately obvious how graph statistics such as the degree distribution or the clustering coefficient should be defined for it. One natural approach is to evaluate the statistic of interest along a sequence of finite induced subgraphs of $G$ and to pass to the limit; the limit, however, could a priori depend on the chosen sequence. 

We therefore fix a sequence
$E_1, E_2, \dots \in \Bcal(M)$ with $\lambda_g(E_n) < \infty$ for all $n \in \nbb$ and $E_n \uparrow M$, and show that restricting $G$ to the sets $E_n$ yields limits in probability that do not depend on the choice of this sequence. We may impose the additional growth condition
\begin{equation} \label{CondSummable}
    \sum_{n=1}^{\infty} \frac{1}{\lambda_g(E_n)} < \infty,
\end{equation}
under which our limits hold almost surely rather than merely in probability; it is satisfied, e.g., by balls $E_n = B_n(x_0)$ in $\rbb^d$ for $d \geq 2$.

\subsection{Asymptotic sparsity}

We begin by showing that $G$ is asymptotically sparse: more precisely, the number of edges among the vertices in a growing observation window grows at most linearly in the number of such vertices. The proof relies on the univariate and multivariate Mecke equations \citep[Theorems~4.1 and~4.4]{Penrose2017}.

\begin{proposition}\label{PropConvSparse}
Set $v_n \coloneqq \#\{v(G) \cap E_n\}$ and $e_n \coloneqq \#\bigl\{\{x,y\}\in e(G) \mid x,y\in E_n\bigr\}$. Then, for every $\varepsilon>0$,
\[
    \lim_{n\to\infty}
    \mathbb{P}\left(
        e_n>
        \left(\frac{\delta V_t}{2}+\varepsilon\right)v_n
    \right)
    =0.
\]
If \eqref{CondSummable} holds, then almost surely
\[
  \limsup_{n \to \infty} \frac{e_n}{v_n} \leq \frac{\delta V_t}{2}.
\]
\end{proposition}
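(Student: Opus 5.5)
The plan is to control $v_n$ and $e_n$ separately by their first two moments and then compare them. Since $\nu\vert_{E_n}$ is Poisson with mean $\delta\lambda_g(E_n)$, we have $\ebb[v_n]=\operatorname{Var}(v_n)=\delta\lambda_g(E_n)$. By Chebyshev this gives $v_n/(\delta\lambda_g(E_n))\to1$ in probability, with tail bound $\pbb(|v_n-\delta\lambda_g(E_n)|>\varepsilon'\delta\lambda_g(E_n))\le 1/(\varepsilon'^2\delta\lambda_g(E_n))$. For the edges, write $2e_n=\sum_{x\neq y\in\nu}\mathbf 1_{E_n}(x)\mathbf 1_{E_n}(y)\mathbf 1\{d_g(x,y)\le t\}$. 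The bivariate Mecke equation \citep[Theorem~4.4]{Penrose2017} then gives
\[
2\ebb[e_n]=\delta^2\int_{E_n}\lambda_g\bigl(E_n\cap B_t(x)\bigr)\,\lambda_g(dx)\le \delta^2V_t\,\lambda_g(E_n),
\]
using homogeneity ($\lambda_g(B_t(x))=V_t$ for all $x$). The upper bound $\ebb[e_n]\le \tfrac{\delta V_t}{2}\cdot\delta\lambda_g(E_n)$ is all we need, since the claim is only a $\limsup$ bound.

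Next we bound $\operatorname{Var}(e_n)$. Expanding $e_n^2$ as a sum over pairs of pairs and splitting by the number of shared points (four distinct, three distinct, two equal) gives three Mecke terms of orders 4, 3 and 2. The order-4 term cancels exactly against $(\ebb e_n)^2$. The order-3 term is bounded by $\delta^3\int_{E_n}\lambda_g(B_t(x))^2\,\lambda_g(dx)\le\delta^3V_t^2\lambda_g(E_n)$, and the order-2 term by $\delta^2V_t\lambda_g(E_n)$. Hence $\operatorname{Var}(e_n)\le C\lambda_g(E_n)$ with $C=\delta^3V_t^2+\delta^2V_t$. This variance bound, specifically tracking the overlapping-pair terms, is the main technical step; homogeneity is essential here because it makes all the ball volumes uniformly $V_t$.

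To conclude, set $a_n\coloneqq\delta\lambda_g(E_n)\to\infty$. By Chebyshev, $e_n/a_n\le \delta V_t/2+\varepsilon'$ and $v_n/a_n\ge1-\varepsilon'$ with probability at least $1-O(1/\lambda_g(E_n))$. On this event,
\[
\frac{e_n}{v_n}\le\frac{\delta V_t/2+\varepsilon'}{1-\varepsilon'}<\frac{\delta V_t}{2}+\varepsilon
\]
for $\varepsilon'$ small enough, which gives the statement in probability. When \eqref{CondSummable} holds, the exceptional probabilities are summable. Borel--Cantelli then shows that, almost surely, these inequalities hold eventually for each fixed $\varepsilon'$. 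Taking $\varepsilon'\downarrow0$ along a countable sequence yields $\limsup e_n/v_n\le\delta V_t/2$ a.s. Note that $v_n>0$ eventually, both with high probability and a.s. under \eqref{CondSummable}, so the ratio is well defined in the limit.
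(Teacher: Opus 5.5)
Your proof is correct. It shares the paper's overall skeleton (Mecke moments, Chebyshev, the ratio with $v_n$, Borel--Cantelli under \eqref{CondSummable}), but the quantity you control is different.

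The paper never analyses $e_n$ itself. It dominates $e_n\le h_n/2$ by the degree sum $h_n=\sum_{S_i\in E_n}(\nu(B_t(S_i))-1)$, which also counts edges leaving $E_n$. This gives $h_n$ the exact mean $\delta^2V_t\lambda_g(E_n)$ with no boundary term. The paper then bounds $\operatorname{Var}(h_n)$ using the bivariate Mecke equation together with the independence of $\nu(B_t(x))$ and $\nu(B_t(y))$ for $d_g(x,y)>2t$, and controls the near-diagonal region by $V_{2t}\lambda_g(E_n)$.

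You instead treat $e_n$ directly as a second-order Poisson U-statistic and accept only an upper bound on its mean. Your variance comes from the exact $4$/$3$/$2$-point Mecke decomposition, in which the $4$-point term cancels exactly against $(\ebb e_n)^2$. This needs no decorrelation argument and no $V_{2t}$ bound, and it yields the explicit estimate $\operatorname{Var}(e_n)\le(\delta^3V_t^2+\tfrac12\delta^2V_t)\lambda_g(E_n)$, so your constant $C$ is valid. It also only uses the uniform bound $\lambda_g(B_t(x))\le V_t$ rather than full homogeneity.

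What the paper's route buys is reusability. Vertex-indexed sums $\sum_{S_i\in E_n}F(S_i,\nu)$ of local functionals are exactly what appear in the degree-distribution and clustering propositions. There no U-statistic structure is available, and the paper invokes the same variance argument verbatim.
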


\begin{proof}
Let $\xi$ be directed by $\nu=\sum_{i=1}^{\infty}\delta_{S_i}$. The degree of any vertex $S_i$ is $\nu(B_t(S_i))-1$. By the handshaking lemma,
\[
    e_n
    \leq
    \frac{1}{2}
    \sum_{S_i\in E_n}
    \bigl(\nu(B_t(S_i))-1\bigr)
    \eqcolon
    \frac{1}{2}h_n.
\]
By the Mecke equation, in which the atom added at $x$ cancels the $-1$,
\[
    \mathbb{E}[h_n]
    =
    \delta\int_{E_n}
    \mathbb{E}[\nu(B_t(x))]\,\lambda_g(dx)
    =
    \delta^2V_t\lambda_g(E_n),
\]
using that $\nu(B_t(x))\sim\operatorname{Poisson}(\delta V_t)$ for all $x$. Next, the multivariate Mecke equation gives
\begin{align*}
    &\mathbb{E}\Big[
        h_n^2
        -
        \sum_{S_i\in E_n}
        \bigl(\nu(B_t(S_i))-1\bigr)^2
    \Big] \\
    &\quad=
    \mathbb{E}\Big[
        \sum_{\substack{S_i,S_j\in E_n\\S_i\neq S_j}}
        \bigl(\nu(B_t(S_i))-1\bigr)
        \bigl(\nu(B_t(S_j))-1\bigr)
    \Big] \\
    &\quad=
    \delta^2
    \int_{\{d_g(x,y)\leq t\}}
    \mathbb{E}\bigl[
        (\nu(B_t(x))+1)(\nu(B_t(y))+1)
    \bigr]
    \,\lambda_g^{\otimes2}(dx,dy) \\
    &\qquad+
    \delta^2
    \int_{\{d_g(x,y)>t\}}
    \mathbb{E}\bigl[
        \nu(B_t(x))\nu(B_t(y))
    \bigr]
    \,\lambda_g^{\otimes2}(dx,dy) \\
    &\quad=
    \delta^2
    \int_{\{d_g(x,y)>2t\}}
    \mathbb{E}[\nu(B_t(x))]
    \mathbb{E}[\nu(B_t(y))]
    \,\lambda_g^{\otimes2}(dx,dy)
    +
    O(\lambda_g(E_n)) \\
    &\quad=
    \mathbb{E}[h_n]^2+O(\lambda_g(E_n)),
\end{align*}
where all domains of integration are intersected with $E_n^2$. For the third equality, note that $B_t(x)$ and $B_t(y)$ are disjoint when $d_g(x,y)>2t$, so $\nu(B_t(x))$ and $\nu(B_t(y))$ are independent; the complementary region $\{(x,y)\in E_n^2:d_g(x,y)\leq 2t\}$ has $\lambda_g^{\otimes2}$-measure at most $V_{2t}\lambda_g(E_n)$ by homogeneity, while the integrands are bounded by a constant depending only on $\delta V_t$. Since
\[
    \mathbb{E}\Big[
        \sum_{S_i\in E_n}
        \bigl(\nu(B_t(S_i))-1\bigr)^2
    \Big]
    =
    \delta\int_{E_n}
    \mathbb{E}[\nu(B_t(x))^2]\,\lambda_g(dx)
    =
    O(\lambda_g(E_n)),
\]
we obtain $\operatorname{Var}(h_n)=O(\lambda_g(E_n))$. As $E_n \uparrow M$ and $\lambda_g(M) = \infty$, we have $\lambda_g(E_n) \to \infty$, so Chebyshev's inequality yields $h_n/\lambda_g(E_n) \overset{\pbb}{\to} \delta^2 V_t$. Since $v_n = \nu(E_n) \sim \operatorname{Poisson}(\delta\lambda_g(E_n))$, the same argument gives $v_n/\lambda_g(E_n) \overset{\pbb}{\to} \delta$, and hence $\pbb\big(h_n > (\delta V_t + 2\varepsilon) v_n\big) \to 0$ for every $\varepsilon > 0$. As $e_n \leq h_n/2$,
    \begin{equation*}
        \pbb\Big(e_n > \big(\frac{\delta V_t}{2} + \varepsilon\big) v_n\Big)
        \leq \pbb\big(h_n > (\delta V_t + 2\varepsilon)\, v_n\big)
        \longrightarrow 0.
    \end{equation*}

Now suppose that \eqref{CondSummable} holds. The variance estimates above and Chebyshev's inequality imply, for every $\eta>0$,
\[
    \sum_{n=1}^{\infty}
    \mathbb{P}\left(
        \left\vert
            \frac{h_n}{\lambda_g(E_n)}-\delta^2V_t
        \right\vert>\eta
    \right)
    <\infty
    \quad\text{and}\quad
    \sum_{n=1}^{\infty}
    \mathbb{P}\left(
        \left\vert
            \frac{v_n}{\lambda_g(E_n)}-\delta
        \right\vert>\eta
    \right)
    <\infty.
\]
Thus, by the first Borel--Cantelli lemma,
\[
    \frac{h_n}{\lambda_g(E_n)}
    \longrightarrow\delta^2V_t
    \quad\text{and}\quad
    \frac{v_n}{\lambda_g(E_n)}
    \longrightarrow\delta
    \qquad\text{almost surely}.
\]
In particular, $v_n>0$ eventually almost surely, and therefore
\[
    \limsup_{n\to\infty}\frac{e_n}{v_n}
    \leq
    \lim_{n\to\infty}\frac{h_n}{2v_n}
    =
    \frac{\delta V_t}{2}
    \qquad\text{almost surely}.
\]
\end{proof}

\subsection{Asymptotic degree distribution} \label{SubsectionLimitingDegree}

The \textit{degree} of a vertex $x \in v(G)$ is $\deg(x) \coloneqq \#\{y \in v(G) \mid \{x,y\} \in e(G)\}$; in the threshold model, $\deg(x) = \nu(B_t(x)) - 1$ for every vertex $x$, which is almost surely finite since $\nu$ is locally finite.

Using the same technique as in the proof of Proposition~\ref{PropConvSparse}, we now derive the asymptotic degree distribution of $G$.

\begin{proposition} \label{PropConvDegDist}
For $k\in\nbb_0$, write
\[
    \widehat p_n(k)
    \coloneqq
    \frac{1}{\nu(E_n)}
    \sum_{x\in v(G)\cap E_n}
    \mathbf{1}{\{\deg(x)=k\}}
\]
on the event $\{\nu(E_n)>0\}$, and set $\widehat p_n(k) \coloneqq 0$
otherwise. Then
\[
    \widehat p_n(k)
    \overset{\pbb}{\longrightarrow}
    e^{-\delta V_t}\frac{(\delta V_t)^k}{k!}.
\]
If \eqref{CondSummable} holds, then this convergence holds almost surely.
\end{proposition}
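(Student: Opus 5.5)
The plan is to copy the second-moment argument used for Proposition~\ref{PropConvSparse}, now with the functional
\[
    H_n \coloneqq \sum_{S_i\in E_n} \mathbf{1}\{\nu(B_t(S_i))-1=k\},
\]
so that $\widehat p_n(k)=H_n/\nu(E_n)$ on $\{\nu(E_n)>0\}$. I would show $H_n/\lambda_g(E_n)\to \delta\, e^{-\delta V_t}(\delta V_t)^k/k!$ and $\nu(E_n)/\lambda_g(E_n)\to\delta$. The second convergence was already obtained in the proof of Proposition~\ref{PropConvSparse}, both in probability and, under \eqref{CondSummable}, almost surely. Since $\pbb(\nu(E_n)=0)\to 0$ (respectively $\nu(E_n)>0$ eventually a.s.), the quotient then converges to the claimed Poisson probability.

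\emph{First moment.} By the univariate Mecke equation, the atom inserted at $x$ cancels the $-1$, giving
\[
    \ebb[H_n]=\delta\int_{E_n}\pbb\bigl(\nu(B_t(x))=k\bigr)\,\lambda_g(dx)=\delta\lambda_g(E_n)\,e^{-\delta V_t}\frac{(\delta V_t)^k}{k!}.
\]
Here homogeneity ensures $\nu(B_t(x))\sim\operatorname{Poisson}(\delta V_t)$ for every $x$.

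\emph{Second moment.} Write $H_n^2$ as the diagonal sum, which is bounded by $H_n$ and hence $O(\lambda_g(E_n))$ in expectation, plus the off-diagonal double sum over distinct atoms. The multivariate Mecke equation turns the off-diagonal expectation into
\[
    \delta^2\int_{E_n^2}\pbb\bigl(\nu_{x,y}(B_t(x))-1=k,\ \nu_{x,y}(B_t(y))-1=k\bigr)\,\lambda_g^{\otimes2}(dx,dy),
\qquad \nu_{x,y}\coloneqq\nu+\delta_x+\delta_y .\]
On $\{d_g(x,y)>2t\}$ the balls are disjoint, so the probability factorises into $\pbb(\nu(B_t(x))=k)^2$. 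On the complement, which has $\lambda_g^{\otimes2}$-measure at most $V_{2t}\lambda_g(E_n)$ by homogeneity, the integrand is bounded by $1$. Adding back the missing near-diagonal part of $\ebb[H_n]^2$, which is also $O(\lambda_g(E_n))$, gives $\operatorname{Var}(H_n)=O(\lambda_g(E_n))$. The indicators being bounded, this step is actually easier than in Proposition~\ref{PropConvSparse}.

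\emph{Conclusion.} Chebyshev's inequality and $\lambda_g(E_n)\to\infty$ give convergence in probability of $H_n/\lambda_g(E_n)$. Under \eqref{CondSummable}, the Chebyshev bounds $O(1/\lambda_g(E_n))$ are summable, and the first Borel--Cantelli lemma upgrades this to almost sure convergence, exactly as before. The only delicate point, and the one I expect to need the most care, is the variance computation: checking that the Mecke-augmented configuration $\nu+\delta_x+\delta_y$ is handled correctly when $d_g(x,y)\le t$ (where $y$ counts toward the ball around $x$). Since that region is only used through the crude bound by $1$, it causes no real obstacle.
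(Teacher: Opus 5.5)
Your proposal is correct and follows the paper's proof essentially step for step. It uses the same functional $H_n(k)$, the univariate Mecke equation for its mean, and the multivariate Mecke equation with the split at $d_g(x,y)=2t$ to obtain $\operatorname{Var}(H_n(k))=O(\lambda_g(E_n))$; it then concludes via Chebyshev (plus Borel--Cantelli under \eqref{CondSummable}) together with $\nu(E_n)/\lambda_g(E_n)\to\delta$. Your variance computation is in fact more explicit than the paper's, which simply refers back to Proposition~\ref{PropConvSparse}.
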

\begin{proof}
Let $\xi$ be directed by $\nu=\sum_{i=1}^{\infty}\delta_{S_i}$. A vertex $S_i$ has degree $k$ if and only if $\nu(B_t(S_i))=k+1$, so the number of degree-$k$ vertices in $E_n$ is
\[
    H_n(k)
    \coloneqq
    \sum_{S_i\in E_n}
    \mathbf{1}{\{\nu(B_t(S_i))=k+1\}}.
\]
By the Mecke equation, in which the atom added at $x$ accounts for the shift from $k+1$ to $k$,
\[
    \ebb[H_n(k)]
    =
    \delta\int_{E_n}
    \pbb\bigl(\nu(B_t(x))=k\bigr)\,\lambda_g(dx)
    =
    \delta\lambda_g(E_n)
    e^{-\delta V_t}\frac{(\delta V_t)^k}{k!}.
\]
As in the proof of Proposition~\ref{PropConvSparse}, the multivariate Mecke equation and the bound on the region $\{d_g(x,y)\leq 2t\}$ yield $\operatorname{Var}(H_n(k)) = O\bigl(\lambda_g(E_n)\bigr)$. Hence $H_n(k)/\lambda_g(E_n) \overset{\pbb}{\longrightarrow} \delta e^{-\delta V_t}(\delta V_t)^k/k!$. Since $\nu(E_n) / \lambda_g(E_n) \overset{\pbb}{\longrightarrow}\delta$ and $\pbb\bigl(\nu(E_n)=0\bigr)\longrightarrow0$, the first assertion follows.

Now suppose that \eqref{CondSummable} holds. By the variance bound above, Chebyshev's inequality and the same Borel--Cantelli argument as in the proof of Proposition~\ref{PropConvSparse} yield
\[
    \frac{H_n(k)}{\lambda_g(E_n)}
    \longrightarrow
    \delta e^{-\delta V_t}\frac{(\delta V_t)^k}{k!}
    \quad\text{and}\quad
    \frac{\nu(E_n)}{\lambda_g(E_n)}
    \longrightarrow
    \delta
    \qquad\text{almost surely}.
\]
Hence $\nu(E_n)>0$ eventually almost surely, and therefore
\[
    \widehat p_n(k)
    =
    \frac{H_n(k)/\lambda_g(E_n)}
         {\nu(E_n)/\lambda_g(E_n)}
    \longrightarrow
    e^{-\delta V_t}\frac{(\delta V_t)^k}{k!}
    \qquad\text{almost surely}.
\]
\end{proof}

\begin{corollary}\label{CorEmpiricalDegreeDistribution}
Let
\[
    \widehat{\mu}_n
    \coloneqq
    \sum_{k=0}^{\infty}\widehat p_n(k)\,\delta_k
\]
denote the empirical degree distribution on $\nbb_0$, where $\widehat p_n(k)$ is defined as in Proposition~\ref{PropConvDegDist}, and let $\pi \coloneqq \operatorname{Poisson}(\delta V_t)$.
Then
\[
    d_{\mathrm{TV}}(\widehat{\mu}_n,\pi)
    \xrightarrow{\pbb}0.
\]
If, in addition, \eqref{CondSummable} holds, then this convergence holds almost surely.
\end{corollary}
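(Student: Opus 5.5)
The plan is to upgrade the pointwise convergence of Proposition~\ref{PropConvDegDist} to total variation convergence with a Scheffé-type argument. Write $\pi(k) \coloneqq e^{-\delta V_t}(\delta V_t)^k/k!$. On $\{\nu(E_n)>0\}$ the empirical law $\widehat{\mu}_n$ is a probability measure on $\nbb_0$, and so is $\pi$. For any $K\in\nbb$,
\begin{equation*}
    2\,d_{\mathrm{TV}}(\widehat{\mu}_n,\pi)
    = \sum_{k=0}^{\infty}\abs{\widehat p_n(k)-\pi(k)}
    \le \sum_{k\le K}\abs{\widehat p_n(k)-\pi(k)}
    + \Bigl(1-\sum_{k\le K}\widehat p_n(k)\Bigr)
    + \Bigl(1-\sum_{k\le K}\pi(k)\Bigr).
\end{equation*}
The two tail terms come from the identities $\sum_{k>K}\widehat p_n(k)=1-\sum_{k\le K}\widehat p_n(k)$ and the same identity for $\pi$.

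Next I bound the right-hand side by pointwise quantities. Adding and subtracting $\sum_{k\le K}\pi(k)$ in the second term gives
\begin{equation*}
    2\,d_{\mathrm{TV}}(\widehat{\mu}_n,\pi)
    \le 2\sum_{k\le K}\abs{\widehat p_n(k)-\pi(k)} + 2\Bigl(1-\sum_{k\le K}\pi(k)\Bigr).
\end{equation*}
The right-hand side is now a finite sum of the pointwise differences plus a deterministic tail.

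Fix $\varepsilon>0$ and choose $K$ so that the deterministic tail $1-\sum_{k\le K}\pi(k)$ is below $\varepsilon/2$; this is possible because $\pi$ is a probability measure. Proposition~\ref{PropConvDegDist} gives $\widehat p_n(k)\to\pi(k)$ in probability for each of the finitely many $k\le K$. A finite sum of terms tending to $0$ in probability tends to $0$ in probability, so
\begin{equation*}
    \limsup_{n\to\infty}\pbb\bigl(d_{\mathrm{TV}}(\widehat{\mu}_n,\pi)>\varepsilon\bigr)=0.
\end{equation*}

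The event $\{\nu(E_n)=0\}$ needs separate handling, since there $\widehat{\mu}_n$ is the zero measure rather than a probability measure. Its probability tends to $0$, as noted in the proof of Proposition~\ref{PropConvDegDist}, so it does not affect convergence in probability. I expect this to be the only mildly delicate point of the argument.

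Under \eqref{CondSummable}, Proposition~\ref{PropConvDegDist} gives almost sure convergence for each $k$. Intersecting these countably many full-measure events with $\{\nu(E_n)>0 \text{ eventually}\}$, which also has full measure by that proof, I obtain a single full-measure event. On it, the displayed bound holds for all large $n$ and every $K$, so $\limsup_n d_{\mathrm{TV}}(\widehat{\mu}_n,\pi)\le 1-\sum_{k\le K}\pi(k)$ for all $K$. Letting $K\to\infty$ yields almost sure convergence to $0$.
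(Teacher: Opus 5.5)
Your proof is correct and follows the paper's route: pointwise convergence from Proposition~\ref{PropConvDegDist} is upgraded to total variation by a Scheffé-type argument, which the paper invokes by citing Kallenberg's Lemma~1.34 on the almost-sure event where every $\widehat p_n(k)$ converges and $\nu(E_n)>0$ eventually. Your bound $d_{\mathrm{TV}}(\widehat{\mu}_n,\pi)\le\sum_{k\le K}\abs{\widehat p_n(k)-\pi(k)}+1-\sum_{k\le K}\pi(k)$ on $\{\nu(E_n)>0\}$ is that lemma's proof written out explicitly, and it also supplies the justification for the in-probability case, which the paper only asserts.
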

\begin{proof}
By Proposition~\ref{PropConvDegDist},
\[
    \widehat p_n(k)\xrightarrow{\pbb} 
    \pi(\{k\}) \eqcolon \pi_k
\]
for every $k\in\nbb_0$. Since $\widehat{\mu}_n$ and $\pi$ are probability measures on the countable space $\nbb_0$, pointwise convergence of their masses implies convergence in total variation in probability.

Now suppose that \eqref{CondSummable} holds. Then $\widehat p_n(k) \to \pi_k$ almost surely, separately for every $k\in\nbb_0$. Since $\nbb_0$ is countable, these convergences hold simultaneously outside a single null set. Since $\widehat\mu_n$ is eventually a probability measure almost surely, \citet[Lemma~1.34]{Kallenberg2021} yields
\[
    d_{\mathrm{TV}}\!\left(
        \widehat\mu_n,
        \pi
    \right)
    \longrightarrow 0
    \qquad\text{almost surely}.
\]
\end{proof}

\subsection{Asymptotic clustering coefficient} \label{SubsectionLimitingClustering}

Note that the limiting degree distribution depends on the manifold, the density, and the threshold only through the product $\delta V_t$: on any homogeneous manifold, every Poisson law arises as the limiting degree distribution for a suitable choice of $\delta$. The degree distribution alone therefore cannot distinguish the underlying geometries. This raises the question of whether different manifolds can produce asymptotically different graphs at all; to answer it affirmatively, we turn to the clustering coefficient of $G$.

The \textit{local clustering coefficient} of a vertex $x \in v(G)$ is the proportion of pairs of neighbours of $x$ that are themselves adjacent: writing $\tau(x)$ for the number of triangles in $G$ containing $x$,
\begin{equation*}
    C(x) \coloneqq \frac{2\tau(x)}{\deg(x)(\deg(x)-1)},
\end{equation*}
with the convention that $C(x) \coloneqq 0$ if $\deg(x) \leq 1$. The \textit{clustering coefficient} of a nonempty finite set of vertices is the average of the local clustering coefficients of its members.

\begin{proposition} \label{PropConvClust}
    Write
    \begin{equation*}
        \widehat{C}_n \coloneqq \frac{1}{\nu(E_n)}
        \sum_{x \in v(G) \cap E_n} C(x)
    \end{equation*}
    on the event $\{\nu(E_n) > 0\}$, and $\widehat{C}_n \coloneqq 0$ otherwise. Then
    \begin{equation*}
        \widehat{C}_n \overset{\pbb}{\to}
        \big(1 - e^{-\delta V_t}(1 + \delta V_t)\big)\, p_t.
    \end{equation*}
    If, in addition, \eqref{CondSummable} holds, then this convergence
    holds almost surely.
\end{proposition}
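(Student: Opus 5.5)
We follow the template of Propositions~\ref{PropConvSparse} and~\ref{PropConvDegDist}. Let $\xi$ be directed by $\nu=\sum_i\delta_{S_i}$ and set
\[
    K_n \coloneqq \sum_{S_i\in E_n} C(S_i),
\]
so that $\widehat C_n = K_n/\nu(E_n)$ on $\{\nu(E_n)>0\}$. In the threshold model the local clustering coefficient of a vertex $x$ is a measurable functional of $x$ and of the restriction $\nu\vert_{B_t(x)}$ alone. Writing $\nu^x\coloneqq\nu\vert_{B_t(x)}$ with the atom at $x$ removed, we have $\deg(x)=\nu^x(M)$, and $2\tau(x)$ is the number of ordered pairs of distinct atoms of $\nu^x$ at mutual distance at most $t$. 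Hence $C(x)=F(x,\nu)$ for some measurable $F$ with $0\le F\le 1$ and $F(x,\nu)=F(x,\nu\vert_{B_t(x)})$.

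The key step is the first moment. By the Mecke equation,
\[
    \ebb[K_n]=\delta\int_{E_n}\ebb\bigl[F(x,\nu+\delta_x)\bigr]\,\lambda_g(dx).
\]
Under $\nu+\delta_x$, the neighbours of $x$ are the atoms of $\nu\vert_{B_t(x)}$. By Lemma~\ref{LemmaRestrictionMixedPoisson} these are $N\sim\operatorname{Poisson}(\delta V_t)$ i.i.d.\ $g$-uniform points $X_1,\dots,X_N$ in $B_t(x)$, independent of $N$. Conditionally on $N=m\ge2$, the quantity $2\tau(x)$ is the number of ordered pairs $(j,l)$, $j\ne l$, with $d_g(X_j,X_l)\le t$. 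Each such pair has probability $p_t$ by the definition of the clustering tendency, so
\[
    \ebb[C(x)\mid N=m]=\frac{m(m-1)p_t}{m(m-1)}=p_t .
\]
For $m\le1$ the conditional expectation is $0$. Thus $\ebb[F(x,\nu+\delta_x)]=p_t\,\pbb(N\ge2)=p_t\bigl(1-e^{-\delta V_t}(1+\delta V_t)\bigr)$, independently of $x$ by homogeneity. This gives $\ebb[K_n]=\delta\lambda_g(E_n)\bigl(1-e^{-\delta V_t}(1+\delta V_t)\bigr)p_t$.

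For the variance we write
\[
    \ebb[K_n^2]=\ebb\Bigl[\sum_{S_i\in E_n} C(S_i)^2\Bigr]+\ebb\Bigl[\sum_{S_i\ne S_j\in E_n}C(S_i)C(S_j)\Bigr].
\]
The diagonal term is at most $\ebb[\nu(E_n)]=O(\lambda_g(E_n))$ since $C\le1$. For the off-diagonal term we apply the multivariate Mecke equation, giving
\[
    \delta^2\int_{E_n^2}\ebb\bigl[F(x,\nu+\delta_x+\delta_y)\,F(y,\nu+\delta_x+\delta_y)\bigr]\,\lambda_g^{\otimes2}(dx,dy).
\]
When $d_g(x,y)>2t$, the balls $B_t(x)$ and $B_t(y)$ are disjoint and neither contains the other point. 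Locality of $F$ and independence of Poisson restrictions to disjoint sets then factor the integrand as $\ebb[F(x,\nu+\delta_x)]\,\ebb[F(y,\nu+\delta_y)]$. On the region $d_g(x,y)\le2t$ the integrand is bounded by $1$, and by homogeneity this region has measure at most $V_{2t}\lambda_g(E_n)$. Hence $\operatorname{Var}(K_n)=O(\lambda_g(E_n))$.

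Chebyshev's inequality then gives $K_n/\lambda_g(E_n)\overset{\pbb}{\to}\delta\bigl(1-e^{-\delta V_t}(1+\delta V_t)\bigr)p_t$. Combining this with $\nu(E_n)/\lambda_g(E_n)\overset{\pbb}{\to}\delta$ and $\pbb(\nu(E_n)=0)\to0$ yields the claim. Under \eqref{CondSummable}, the Chebyshev bounds are summable, and Borel--Cantelli gives almost sure convergence exactly as in Proposition~\ref{PropConvDegDist}.

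The main obstacle is making rigorous that $C(x)$ is a measurable local functional of $\nu$, and computing its Palm expectation through the conditional i.i.d.\ uniform structure of the points in the ball. In particular, we need that the pair probability under the uniform law on $B_t(x)$ is exactly $p_t$. This is immediate from the definition of $p_t$ together with homogeneity.
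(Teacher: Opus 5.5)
Your proposal is correct and follows the paper's proof: Mecke for the first moment, multivariate Mecke with factorisation beyond distance $2t$ for $\operatorname{Var}=O(\lambda_g(E_n))$, then Chebyshev, the ratio with $\nu(E_n)/\lambda_g(E_n)$, and Borel--Cantelli under \eqref{CondSummable}. The one variation is in computing $\ebb[F(x,\nu+\delta_x)]$: you condition on $N=\nu(B_t(x))$ and use the i.i.d.\ uniform structure to get $p_t\,\pbb(N\ge 2)$ directly, whereas the paper applies the multivariate Mecke equation again and evaluates $\ebb[1/((N_x+2)(N_x+1))]$; both give the same value.
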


\begin{proof}
Let $\xi$ be directed by $\nu=\sum_{i=1}^{\infty}\delta_{S_i}$, and set
\[
    T_n\coloneqq\sum_{S_i\in E_n}C(S_i).
\]
By homogeneity and the Mecke equation,
\[
    \ebb[T_n]
    =
    \delta\lambda_g(E_n)\ebb^x[C(x)],
\]
where $\ebb^x$ denotes expectation for the process directed by $\nu+\delta_x$, and $x\in M$ is arbitrary. Writing $N_x \coloneqq \nu(B_t(x))$, the multivariate Mecke equation gives
\begin{align*}
    \ebb^x[C(x)]
    &=
    \ebb\Big[
        \sum_{\substack{S_i,S_j\in B_t(x)\\S_i\neq S_j}}
        \frac{\mathbf{1}\{d_g(S_i,S_j)\leq t\}}
             {N_x(N_x-1)}
        \mathbf{1}\{N_x\geq2\}
    \Big] \\
    &=
    \delta^2
    \ebb\Big[\frac{1}{(N_x+2)(N_x+1)}\Big]
    \int_{B_t(x)^2}
    \mathbf{1}\{d_g(y,z)\leq t\}
    \,\lambda_g(dy)\lambda_g(dz).
\end{align*}
Since $N_x\sim\operatorname{Poisson}(\delta V_t)$,
\[
    \ebb\left[\frac{1}{(N_x+2)(N_x+1)}\right]
    =
    \frac{1-e^{-\delta V_t}(1+\delta V_t)}
         {(\delta V_t)^2},
\]
while the integral equals $V_t^2p_t$. Hence, with
\[
    c_t\coloneqq\bigl(1-e^{-\delta V_t}(1+\delta V_t)\bigr)p_t,
\]
we have $\ebb[T_n]=\delta\lambda_g(E_n)c_t$. As in the proof of Proposition~\ref{PropConvSparse}, the multivariate Mecke equation and the bound on the region $\{d_g(x,y)\leq 2t\}$ yield $\operatorname{Var}(T_n)=O(\lambda_g(E_n))$. Thus $T_n/\lambda_g(E_n)\overset{\pbb}{\to}\delta c_t$. Together with $\nu(E_n)/\lambda_g(E_n)\overset{\pbb}{\to}\delta$ and $\pbb(\nu(E_n)=0)\to0$, this gives $\widehat C_n=T_n/\nu(E_n)\overset{\pbb}{\to}c_t$.

If \eqref{CondSummable} holds, the variance bound, Chebyshev's
inequality, and the same Borel--Cantelli argument as in the proof of
Proposition~\ref{PropConvSparse} give
$T_n/\lambda_g(E_n)\to\delta c_t$ almost surely. Since that proof also
gives $\nu(E_n)/\lambda_g(E_n)\to\delta$ almost surely, it follows that
$\widehat C_n\to c_t$ almost surely.
\end{proof}

The limiting average clustering coefficient thus factorizes into $1 - e^{-\delta V_t}(1 + \delta V_t)$ --- the probability that a $\operatorname{Poisson}(\delta V_t)$ variable is at least $2$, and hence a function of the limiting degree distribution alone --- and the purely geometric factor $p_t$. Two manifolds with matched $\delta V_t$ therefore share their limiting degree distribution, yet may differ in their limiting clustering. Subsections~\ref{SubsectionEucSpace} and~\ref{SubsectionHypSpace} show that Euclidean and hyperbolic space do: their clustering tendencies differ at every threshold.

\subsection{Euclidean Space} \label{SubsectionEucSpace}

\begin{table}[ht]
\centering
\footnotesize
\setlength{\tabcolsep}{3pt}
\renewcommand{\arraystretch}{1.25}
\begin{tabular}{@{}p{0.3 \textwidth}p{0.65\textwidth}@{}}
\toprule
Attribute & Threshold model on $\mathbb{R}^d$ \\
\midrule
Distance
&
$d_{\bar g}(x,y)=\lVert x-y\rVert_2$
\\
Euclidean volume element
&
$\lambda_d(dy)=r^{d-1}\,dr\,d\omega$
\\
Ball volume
&
$\displaystyle
\overline V_t^{\,d}
=
\frac{\pi^{d/2}}{\Gamma(d/2+1)}t^d$
\\
Clustering tendency
&
$\displaystyle
\overline p_d
=
1-\frac{3\sqrt3}{4\pi}
\quad\text{for }d=2$
\\
Sparse
&
Yes
\\
Limiting degree law
&
$\operatorname{Poisson}(\delta\overline V_t^{\,d})$
\\
Limiting clustering coefficient
&
$\displaystyle
\bigl(1-e^{-\delta\overline V_t^{\,d}}
(1+\delta\overline V_t^{\,d})\bigr)\overline p_d$
\\
Globally exchangeable
&
No
\\
Boundedly exchangeable
&
Yes
\\
Associated matrix on $B_R(0)$
&
$\displaystyle
A^{B_R(0)}_{ij}
=
\mathbf{1}\{i \neq j\} \mathbf{1}\{\lVert S_i-S_j\rVert_2\leq t\}$,
with $(S_i)$ i.i.d.\ uniform on $B_R(0)$
\\
Sampling function for $B_R(0)$
&
$\displaystyle
f_{B_R(0)}(u)
=
R\sqrt{u_1}
\bigl(\cos(2\pi u_2),\sin(2\pi u_2)\bigr)
\quad\text{for }d=2$
\\
Local graphon for $B_R(0)$
&
$\displaystyle
W_{B_R(0)}(u,v)
=
\mathbf{1}\{u \neq v\} \mathbf{1}\{
\lVert f_{B_R(0)}(u)-f_{B_R(0)}(v)\rVert_2
\leq t
\}$
\\
\bottomrule
\end{tabular}
\caption{Key attributes of the threshold model on $d$-dimensional
Euclidean space with threshold $t>0$ and density $\delta>0$.
Here $u_1$ and $u_2$ are obtained from $u\in[0,1]$ as described below.}
\label{table_euc}
\end{table}

The simplest example of our setting is $d$-dimensional Euclidean space $(\rbb^d, \overline{g})$, where $\overline{g}$ denotes the Euclidean metric; it has constant sectional curvature zero. The Riemannian distance induced by $\overline{g}$ is
\begin{equation*}
    d_{\overline{g}}(x,y) = \norm{x-y}_2
    = \sqrt{\sum_{i=1}^d (x_i - y_i)^2},
    \qquad x, y \in \rbb^d.
\end{equation*}

Since $\sqrt{\vert G \vert} \equiv 1$ in the identity chart, the Riemannian volume measure on $\rbb^d$ induced by $\overline{g}$ is the Lebesgue measure $\lambda_d$. In spherical polar coordinates around the origin (see Appendix~\ref{SubsectionSphericalPolEuc}), the volume element takes the form
\begin{equation*}
    d\lambda_d = r^{d-1}\,dr\,d\omega,
\end{equation*}
where $d\omega$ denotes the surface-area element of the unit sphere $S^{d-1}$, with total mass $A(S^{d-1}) = 2\pi^{d/2}/\Gamma(d/2)$. Integrating, the volume of the closed ball $B_t(x)$ is
\begin{equation*}
    \overline{V}^d_t \coloneqq \frac{A(S^{d-1})}{d}\,t^d
    = \frac{\pi^{d/2}}{\Gamma(d/2+1)}\,t^d,
\end{equation*}
independently of $x$.

By a dilation argument, the clustering tendency of $d$-dimensional Euclidean space at radius $t$ does not depend on $t$; we denote it by $\overline{p}_d$. For $d = 2$, one can show
\begin{equation*}
    \overline{p}_2 = \frac{1}{\pi} \int_0^1
    \Big( 4x\arccos\big(\frac{x}{2}\big) - x^2\sqrt{4 - x^2} \Big)\,dx
    = 1 - \frac{3\sqrt{3}}{4\pi} \approx 0.5865.
\end{equation*}
Consider the threshold model $\xi$ on $\rbb^d$ with density $\delta > 0$ and threshold $t > 0$: points are placed according to a Poisson process with intensity measure $\delta\lambda_d$, and two distinct points are adjacent exactly when their distance is at most $t$. As noted after Proposition~\ref{PropLocalIidRep}, $\xi$ admits no global i.i.d.\ representation, and by Proposition~\ref{PropThresholdModelNotExchangeable} no representation of $\xi$ has a jointly exchangeable adjacency matrix.

However, since its directing process is Poisson, $\xi$ is boundedly exchangeable (see the remark following Definition~\ref{DefBoundedExchangeability}). Hence every closed ball $B_R(x) \subseteq \rbb^d$ with $R > 0$ identifies an infinite jointly exchangeable matrix $A^{B_R(x)}$ via Theorem~\ref{ThmUniqueLocalInfiniteExchangeableMatrix}; the adjacency matrix of any i.i.d.\ representation of $\xi\vert_{B_R(x)}$ may be regarded as a submatrix of $A^{B_R(x)}$ of random finite dimension.
Explicitly, since $\xi$ is connected by the threshold kernel,
\begin{equation*}
    A^{B_R(x)}_{ij} \overset{a.s.}{=}
    \mathbf{1}\{i \neq j\} \mathbf{1}\{ \norm{S_i - S_j} \leq t\}, \qquad i,j \in \nbb,
\end{equation*}
for an array $(S_i)$ of i.i.d.\ uniformly distributed random elements of $B_R(x)$. By homogeneity, the distribution of $A^{B_R(x)}$ does not depend on $x$, and we set $x = 0$ without loss of generality.

By Proposition~\ref{PropLocalGraphon}, $A^{B_R(0)}$ admits an Aldous--Hoover--Kallenberg representation \eqref{EqAldousHooverKallenbergRepErgodic} with a local graphon $W_{B_R(0)} \colon [0,1]^2 \to [0,1]$. To obtain one explicitly, it suffices to choose a sampling function for $\xi$ on $B_R(0)$ --- here, any measurable $f_{B_R(0)} \colon [0,1] \to B_R(0)$ such that $f_{B_R(0)}(U)$ is uniformly distributed on $B_R(0)$ when $U \sim \mathrm{U}[0,1]$ --- and to set
\begin{equation*}
    W_{B_R(0)}(u,v) \coloneqq
    \mathbf{1}\{u \neq v\} \mathbf{1}\big\{ \norm{f_{B_R(0)}(u) - f_{B_R(0)}(v)} \leq t \big\};
\end{equation*}
different choices of the sampling function yield local graphons that are equivalent in the sense of Proposition~\ref{PropLocalGraphonUniqueness}. Sampling functions can be constructed by inverse-transform sampling in suitable coordinates, but the resulting formulas become unwieldy as the dimension increases. We therefore give an explicit construction only for
$d=2$.

Let $U\sim\mathrm{U}[0,1]$ and write
\[
    U=(0.a_1a_2a_3\ldots)_2
\]
for the binary expansion chosen to contain infinitely many zeros. Split
the binary digits into their odd and even subsequences,
\[
    U_1\coloneqq(0.a_1a_3a_5\ldots)_2,
    \qquad
    U_2\coloneqq(0.a_2a_4a_6\ldots)_2.
\]
Then $(U_1,U_2)$ is uniformly distributed on $[0,1]^2$. Consequently,
\[
    f_{B_R(0)}(U)
    \coloneqq
    R\sqrt{U_1}
    \bigl(\cos(2\pi U_2),\sin(2\pi U_2)\bigr)
\]
is uniformly distributed on $B_R(0)$.

For $d = 2$, we illustrate Proposition~\ref{PropConvClust} by simulation. We set $\delta = 1$ and $t = 1$, so that $\delta \overline{V}^2_t = \pi$ and, by Corollary~\ref{CorEmpiricalDegreeDistribution}, the empirical degree distribution converges to the Poisson law with mean $\pi \approx 3.14$. The limiting clustering coefficient of Proposition~\ref{PropConvClust} is $\big(1 - e^{-\pi}(1+\pi)\big)\,\overline{p}_2 \approx 0.482$. Taking $E_n = B_n(0)$, for which $\sum_n \lambda_2(B_n(0))^{-1} = \pi^{-1}\sum_n n^{-2} < \infty$, the convergence is almost sure by \eqref{CondSummable}. We sampled $1000$ independent realizations of $G$ restricted to $B_{31}(0)$ and computed the clustering coefficient $\widehat{C}_R$ of the vertices in $B_R(0)$ for a range of radii $R \leq 30$; this ensures that the local clustering coefficients entering $\widehat{C}_R$ are free of boundary truncation. Figure~\ref{fig:euc_clust} shows the concentration of $\widehat{C}_R$ around the limit, together with a single realization illustrating the almost-sure convergence.
\begin{figure}[h]
\centering
\includegraphics[width=.95\linewidth]{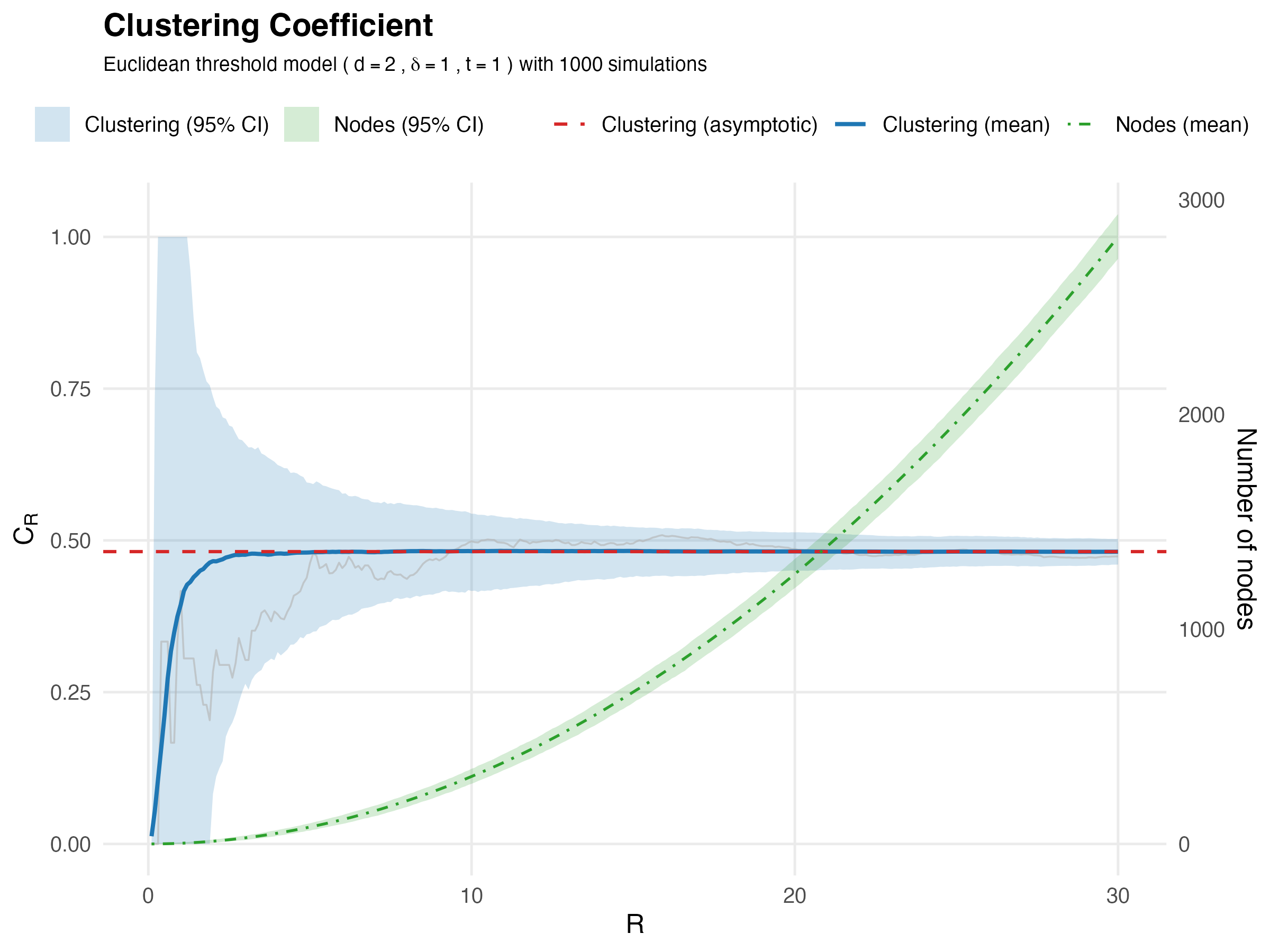}
\caption{Empirical clustering coefficient $\widehat{C}_R$ of the $2$-dimensional Euclidean threshold model with $\delta = 1$ and $t = 1$. Dark blue: mean of $\widehat{C}_R$ over $1000$ realizations; light-blue band: empirical $2.5\%$ and $97.5\%$ quantiles; grey: one randomly selected realization; dashed red: the asymptotic limit $\approx 0.482$ (Proposition~\ref{PropConvClust}). Dark green and light-green band (right axis): mean and corresponding quantiles of the number of vertices in $B_R(0)$.}
\label{fig:euc_clust}
\end{figure}

Both limits above depend on the parameters $(\delta, t)$ only through the product $\delta \overline{V}^2_t$: the limiting degree law is $\operatorname{Poisson}(\delta \overline{V}^2_t)$, and since the clustering tendency $\overline{p}_2$ does not depend on the threshold, the limiting average clustering coefficient is a fixed function of the same quantity. Within Euclidean space, matching the degree distribution therefore also fixes the clustering limit. In hyperbolic space, by contrast, the clustering tendency varies with the curvature, which decouples the two limits; we turn to this next.

\subsection{Hyperbolic Space} \label{SubsectionHypSpace}

\begin{table}[ht]
\centering
\footnotesize
\setlength{\tabcolsep}{3pt}
\renewcommand{\arraystretch}{1.25}
\begin{tabular}{@{}p{0.3\textwidth}p{0.65\textwidth}@{}}
\toprule
Attribute & Threshold model on $\mathbb{H}^d_\kappa$ \\
\midrule
Distance
&
$\displaystyle
d_{\breve g}(x,y)
=
\frac{1}{\kappa}
\operatorname{arcosh}\!\left(
-\kappa^2
\left(-x_0y_0+\sum_{i=1}^d x_i y_i\right)
\right)
$
\\
Hyperbolic volume element
&
$\displaystyle
\lambda_{\breve g}(dy)
=
\frac{1}{\kappa^{d-1}}
\sinh^{d-1}(\kappa r)\,dr\,d\omega
$
\\
Ball volume
&
$\displaystyle
\breve V_t^{\,d}(\kappa)
=
\frac{2\pi^{d/2}}
{\kappa^{d-1}\Gamma(d/2)}
\int_0^t \sinh^{d-1}(\kappa r)\,dr
$
\\
Clustering tendency
&
$\displaystyle
\breve p_d(\kappa t),
\quad\text{see \eqref{EqClustTendHyp} for }d=2
$
\\
Sparse
&
Yes
\\
Limiting degree law
&
$\operatorname{Poisson}(\delta\breve V_t^{\,d}(\kappa))$
\\
Limiting clustering coefficient
&
$\displaystyle
\bigl(
1-e^{-\delta\breve V_t^{\,d}(\kappa)}
(1+\delta\breve V_t^{\,d}(\kappa))
\bigr)\breve p_d(\kappa t)
$
\\
Globally exchangeable
&
No
\\
Boundedly exchangeable
&
Yes
\\
Associated matrix on $B_R(o_\kappa)$
&
$\displaystyle
A^{B_R(o_\kappa)}_{ij}
=
\mathbf{1}\{i \neq j\} \mathbf{1}\{d_{\breve g}(S_i,S_j)\leq t\}$,
with $(S_i)$ i.i.d.\ uniform on $B_R(o_\kappa)$
\\
Sampling function for $B_R(o_\kappa)$
&
$
f_{B_R(o_\kappa)}(u),
\quad\text{see \eqref{EqHypfB} for }d=2
$
\\
Local graphon for $B_R(o_\kappa)$
&
$\displaystyle
W_{B_R(o_\kappa)}(u,v)
=
\mathbf{1}\{u \neq v\}
\mathbf{1}\{
d_{\breve g}(f_{B_R(o_\kappa)}(u),f_{B_R(o_\kappa)}(v))
\leq t
\}
$
\\
\bottomrule
\end{tabular}
\caption{Key attributes of the threshold model on $d$-dimensional
hyperbolic space with curvature parameter $\kappa>0$, threshold $t>0$,
and density $\delta>0$.}
\label{table_hyp}
\end{table}

Our second example is $d$-dimensional hyperbolic space, the complete simply connected manifold of constant negative sectional curvature. By Hilbert's theorem, $2$-dimensional hyperbolic space cannot be isometrically embedded in $3$-dimensional Euclidean space, making it more difficult to visualize. 

For $\kappa > 0$, let $\hbb^d_{\kappa}$ be the set of points $(x_0, x_1, \dots, x_d) \in \rbb^{d+1}$ with $x_0 > 0$ satisfying the relation
\begin{equation*}
    -x_0^2 + \sum_{i = 1}^d x_i^2 = -\frac{1}{\kappa^2},
\end{equation*}
that is, the upper sheet of a two-sheeted hyperboloid in $\rbb^{d+1}$. The pullback of the Minkowski metric $\overline{q} = -\mathrm{d}x_0^2 + \sum_{i = 1}^d \mathrm{d}x_i^2$ on $\rbb^{d+1}$ under the inclusion $\iota \colon \hbb^d_{\kappa} \to \rbb^{d+1}$ is positive definite on $\hbb^d_{\kappa}$ and hence induces a Riemannian metric $\breve{g} \coloneqq \iota^{\ast}\overline{q}$. We call the Riemannian manifold $(\hbb^d_{\kappa}, \breve{g})$ the $d$-dimensional hyperbolic space of radius $1/\kappa$. It is homogeneous with $\lambda_{\breve{g}}(\hbb^d_{\kappa}) = \infty$ and has constant sectional curvature $-\kappa^2$, so it falls within the scope of this section. See \citet{Lee2018} for details.

For $x, y \in \hbb^d_{\kappa}$, the Riemannian distance induced by $\breve{g}$ is given by
\begin{equation*}
    d_{\breve{g}}(x,y)
    = \frac{1}{\kappa}
      \arcosh\left( -\kappa^2 \left(-x_0 y_0
      + \sum_{i = 1}^d x_i y_i\right)\right).
\end{equation*}
Again, it is useful to switch to spherical polar coordinates (see Appendix~\ref{SubsectionSphericalPolHyp}), in which the Riemannian volume measure induced by $\breve{g}$ satisfies
\begin{equation*}
    \mathrm{d}\lambda_{\breve{g}}
    = \frac{1}{\kappa^{d-1}} \sinh^{d-1}(\kappa r)\,\mathrm{d}r\,\mathrm{d}\omega.
\end{equation*}
Integrating, the volume of the closed ball $B_t(x) \subseteq \hbb^d_{\kappa}$ is
\begin{equation*}
    \breve{V}^d_t(\kappa)
    \coloneqq \frac{2\pi^{d/2}}{\kappa^{d-1}\Gamma(d/2)}
      \int_0^t \sinh^{d-1}(\kappa r)\,\mathrm{d}r,
\end{equation*}
independently of $x$; for $d = 2$ this reduces to $\breve{V}^2_t(\kappa) = 2\pi(\cosh(\kappa t) - 1)/\kappa^2$.

By a rescaling argument, the clustering tendency of $\hbb^d_{\kappa}$ at radius $t$ depends only on the product $\alpha \coloneqq \kappa t$, so we denote it by $\breve{p}_d(\alpha)$. For $d = 2$, one can show that
\begin{align}
\breve{p}_2(\alpha) = \int_0^1
\frac{2\alpha\sinh(\alpha x)}
{\pi(\cosh(\alpha)-1)^2}
\Bigg[
&\cosh(\alpha)
\arccos\left(\coth(\alpha)\tanh\left(\frac{\alpha x}{2}\right)\right)
\notag \\
&- \arctan\left(
\frac{
\sqrt{\cosh^2(\alpha) - \cosh^2(\frac{\alpha x}{2})}
}{
\sinh(\frac{\alpha x}{2})
}
\right)
\Bigg]\,\mathrm{d}x. \label{EqClustTendHyp}
\end{align}

The hyperbolic clustering tendency satisfies the following, proved in Appendix~\ref{SubsectionResultOnClustTend}.
\begin{proposition} \label{PropResultsOnClustTend}
    Let $d \geq 2$. The Euclidean and hyperbolic clustering tendencies
    satisfy:
    \begin{enumerate}[label=(\roman*)]
        \item $\breve{p}_d(\alpha) < \overline{p}_d$ for all $\alpha > 0$,
        \item $\alpha \mapsto \breve{p}_d(\alpha)$ is strictly decreasing on $(0,\infty)$,
        \item $\breve{p}_2(\alpha) = \overline{p}_2 - \bigl(\sqrt{3}/(16\pi)\bigr)\alpha^2 + O(\alpha^4)$ as $\alpha \to 0^+$,
        \item $\breve{p}_2(\alpha) = (8/\pi)e^{-\alpha/2} + o(e^{-\alpha/2})$ as $\alpha \to \infty$.
    \end{enumerate}
\end{proposition}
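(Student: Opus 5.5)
We write $p_t$ as a double integral over the ball and rescale. By homogeneity we can work at a base point $o$. In geodesic polar coordinates the volume element is $\sinh^{d-1}(\kappa r)\kappa^{1-d}\,dr\,d\omega$ in $\hbb^d_\kappa$ and $r^{d-1}\,dr\,d\omega$ in $\rbb^d$. Hence
\[
p = \frac{1}{V_t}\int_{B_t(o)} \frac{\lambda(B_t(o)\cap B_t(y))}{V_t}\,\lambda(dy),
\]
and rescaling reduces everything to curvature $-1$ and radius $\alpha$. Set $\phi_\alpha(s) \coloneqq \lambda(B_\alpha(o)\cap B_\alpha(y))/V_\alpha$ for $d(o,y)=s\alpha$; this is the normalised lens volume. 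Then
\[
\breve p_d(\alpha) = \int_0^1 \phi_\alpha(s)\,w_\alpha(s)\,ds, \qquad w_\alpha(s)\propto \sinh^{d-1}(\alpha s).
\]
In the Euclidean case the same formula holds with $\phi_0,w_0$, where $w_0(s)=d s^{d-1}$.

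For (ii), and hence (i) by letting $\alpha\to0^+$ and using $\breve p_d(\alpha)\to \overline p_d$ together with strict monotonicity, I would prove two monotonicity facts.
\begin{enumerate}[label=(\alph*)]
\item For each fixed $s\in(0,1]$, the function $\alpha\mapsto\phi_\alpha(s)$ is strictly decreasing. In negative curvature, volume concentrates near the boundary sphere, and the lens captures a shrinking fraction of it. Concretely, I would write $\phi_\alpha(s)$ in polar coordinates around $o$: for each radius $r$, take the fraction $\theta_\alpha(r,s)$ of the sphere $S_r(o)$ lying in $B_\alpha(y)$, which the hyperbolic law of cosines gives explicitly. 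Then $\phi_\alpha(s)=\int_0^1 \theta\cdot w_\alpha$, and I would show that $\theta$ is decreasing in $\alpha$ for the rescaled radius while the weight shifts mass toward radius $1$, where $\theta$ is smallest.
\item The weight $w_\alpha$ is increasing in likelihood ratio as $\alpha$ grows, since $\sinh(\alpha s)/\sinh(\beta s)$ is increasing in $s$ for $\alpha>\beta$. Also, $\phi_\alpha(s)$ is decreasing in $s$, because the lens shrinks as the centres separate.
\end{enumerate}
Both effects then lower the integral: stochastic dominance combined with a decreasing integrand gives a decrease from (b), and (a) gives the rest. Strictness comes from strict inequalities on sets of positive measure. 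For the limit $\alpha\to0$, dominated convergence suffices.

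For (iii) and (iv) I would use the explicit $d=2$ formula \eqref{EqClustTendHyp}. For (iii), I would Taylor-expand the integrand in $\alpha$ uniformly in $x\in[0,1]$ up to order $\alpha^2$ and integrate term by term, checking that the order-zero term reproduces $\overline p_2$ and that the coefficient of $\alpha^2$ evaluates to $-\sqrt3/(16\pi)$. The integrand is even in $\alpha$, which gives the $O(\alpha^4)$ remainder. For (iv), the prefactor behaves like $8\alpha e^{-\alpha}e^{\alpha x}/\pi$, so the mass concentrates near $x=1$. I would substitute $x=1-u/\alpha$ and apply Laplace's method. The bracket must be expanded: as $\alpha\to\infty$ with $x$ near $1$, $\coth\alpha\tanh(\alpha x/2)\to1$ and the $\arccos$ is of order $e^{-\alpha x/2}$, while the arctan term tends to $\pi/2-$ something of order $e^{-\alpha/2}$. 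Extracting the leading $e^{-\alpha/2}$ order of the bracket cancellation is delicate.

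The main obstacles are step (a), since a clean proof that the normalised lens volume decreases with curvature may require a careful comparison argument in general $d$, and the asymptotic bracket cancellation in (iv). For (a), I would first try a coupling. The map sending hyperbolic polar coordinates $(r,\omega)$ to Euclidean ones with matched radial volume distribution is measure-preserving, and by hyperbolic-law-of-cosines comparison (Toponogov) it expands distances. That forces pairs within distance $\alpha$ to be mapped to pairs within distance $\alpha$, but not conversely.
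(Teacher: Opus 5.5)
Your treatment of (i)--(ii) is the paper's argument with different bookkeeping. Write $q_\alpha(r,s)$ for the probability, over the angle, that points at normalised radii $r,s$ are within distance $1$. Then your $\phi_\alpha(s)$ is $\ebb[q_\alpha(R_\alpha,s)]$, and steps (a)--(b) reproduce the paper's chain $\ebb[q_\alpha(R_\alpha,S_\alpha)]\le\ebb[q_0(R_\alpha,S_\alpha)]\le\ebb[q_0(R_0,S_0)]$ with quantile-coupled radii. What you leave open is the one non-routine input: for fixed $r,s\le 1$ with $r+s>1$, the critical angle $\gamma$ opposite the side of length $1$ shrinks as the curvature becomes more negative. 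The paper proves this from the half-angle formula $\sin^2(\gamma_\alpha/2)=\sinh(\alpha(\sigma-r))\sinh(\alpha(\sigma-s))/\bigl(\sinh(\alpha r)\sinh(\alpha s)\bigr)$, with $\sigma=(r+s+1)/2$. It uses $\sigma-r\le s$, $\sigma-s\le r$ and the monotonicity of $y\mapsto\sinh(\alpha y)/y$ (for (ii), of $y\mapsto\sinh(\alpha' y)/\sinh(\alpha y)$). A hinge comparison theorem would also give this at fixed radii. However, your coupling claim is false as stated: the map $(r,\omega)\mapsto(F_0^{-1}(F_\alpha(r)),\omega)$ also moves radii, and it is neither distance-expanding nor distance-contracting. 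In the unit-threshold normalisation its radial derivative at the boundary is $f_\alpha(1)/d>1$ (with $f_\alpha=F_\alpha'$), so it stretches pairs on a common ray near the boundary. Meanwhile, for two boundary points at a small angle the hyperbolic distance exceeds the Euclidean one, so it shrinks those. Only the inclusion of the threshold events holds. Proving it needs the fixed-radii angle comparison together with the monotonicity of the Euclidean critical angle in each radius (because $F_0^{-1}\circ F_\alpha$ shrinks radii) --- precisely the two separate steps of the paper.

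In (iv) the obstacle you anticipate does not exist, and the asymptotics you state are wrong. The prefactor is $P_\alpha(x)=2\alpha\sinh(\alpha x)/\bigl(\pi(\cosh\alpha-1)^2\bigr)\sim(4\alpha/\pi)e^{\alpha x-2\alpha}$, not $(8\alpha/\pi)e^{\alpha x-\alpha}$, and you have dropped the factor $\cosh\alpha$ in front of the arccos. Keeping it, at $x=1-y/\alpha$ one has $\cosh(\alpha)\arccos\bigl(\coth\alpha\tanh(\alpha x/2)\bigr)\approx e^{\alpha/2}e^{y/2}$. This dwarfs the arctan term, which is bounded by $\pi/2$, so there is no cancellation in the bracket at all. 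The paper simply bounds the arctan contribution by $\tfrac{\pi}{2}\int_0^1P_\alpha\,dx=1/(\cosh\alpha-1)=O(e^{-\alpha})$. For the arccos part, it shows that after the substitution $y=\alpha(1-x)$ the integrand multiplied by $e^{\alpha/2}$ converges pointwise to $\tfrac{4}{\pi}e^{-y/2}$, whence dominated convergence gives $8/\pi$. Two smaller points in (iii). First, both bracket terms equal $\arccos(x/2)$ at order $\alpha^0$ while $P_\alpha\sim 8x/(\pi\alpha^2)$, so the bracket must be expanded to order $\alpha^4$. Second, the arctan term as written is odd in $\alpha$, so your parity argument requires first rewriting it as $\arccos\bigl(\sinh(\alpha x/2)/\sinh\alpha\bigr)$.
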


Consider the threshold model $\xi$ on $\hbb^d_{\kappa}$ with density $\delta > 0$ and threshold $t > 0$: points are placed according to a Poisson process with intensity measure $\delta\lambda_{\breve{g}}$, and two distinct points are adjacent exactly when their distance is at most $t$. As noted after Proposition~\ref{PropLocalIidRep}, $\xi$ admits no global i.i.d.\ representation, and by Proposition~\ref{PropThresholdModelNotExchangeable} no representation of $\xi$ has a jointly exchangeable adjacency matrix.

Instead, since its directing process is Poisson, $\xi$ is boundedly exchangeable (see the remark following Definition~\ref{DefBoundedExchangeability}). Again, every closed ball $B_R(x) \subseteq \hbb^d_{\kappa}$ with $R > 0$ identifies an infinite jointly exchangeable matrix $A^{B_R(x)}$ via Theorem~\ref{ThmUniqueLocalInfiniteExchangeableMatrix}, and the adjacency matrix of any i.i.d.\ representation of $\xi\vert_{B_R(x)}$ may be regarded as a submatrix of $A^{B_R(x)}$ of random finite dimension. Explicitly, since $\xi$ is connected by the threshold kernel,
\begin{equation*}
    A^{B_R(x)}_{ij} \overset{a.s.}{=} \mathbf{1}\{i \neq j\}\mathbf{1}\{d_{\breve{g}}(S_i,S_j) \leq t\}, \qquad i,j \in \nbb,
\end{equation*}
for an array $(S_i)$ of i.i.d.\ uniformly distributed random elements of $B_R(x)$. By homogeneity, the distribution of $A^{B_R(x)}$ does not depend on $x$, so it suffices to consider the hyperbolic origin $o_\kappa \coloneqq (\kappa^{-1}, 0, \dots, 0) $.

By Proposition~\ref{PropLocalGraphon}, $A^{B_R(o_\kappa)}$ admits an Aldous--Hoover--Kallenberg representation \eqref{EqAldousHooverKallenbergRepErgodic} with a local graphon $W_{B_R(o_\kappa)} : [0,1]^2 \to [0,1]$. Again, it suffices to choose a sampling function for $\xi$ on $B_R(o_\kappa)$ --- here, any measurable $f_{B_R(o_\kappa)} : [0,1] \to B_R(o_\kappa)$ such that $f_{B_R(o_\kappa)}(U)$ is uniformly distributed on $B_R(o_\kappa)$ when $U \sim \mathrm{U}[0,1]$ --- and to set
\begin{equation*}
    W_{B_R(o_\kappa)}(u,v) \coloneqq \mathbf{1}\{u \neq v\} \mathbf{1}\{d_{\breve{g}}(f_{B_R(o_\kappa)}(u),
    f_{B_R(o_\kappa)}(v)) \leq t\};
\end{equation*}
different choices of the sampling function yield local graphons that are equivalent in the sense of Proposition~\ref{PropLocalGraphonUniqueness}. Given the Riemannian volume measure in spherical polar coordinates, we construct a sampling function $f_{B_R(o_\kappa)}$ explicitly for $d = 2$. With $(U_1, U_2)$ obtained via the same binary expansion of $U \sim \mathrm{U}[0,1]$ as before,
\begin{equation} \label{EqHypfB}
    f_{B_R(o_\kappa)}(U) \coloneqq \frac{1}{\kappa} \begin{pmatrix}
    U_1(\cosh(\kappa R) - 1) + 1 \\
    \sqrt{(U_1(\cosh(\kappa R) - 1) + 1)^2 - 1}\, \cos(2\pi U_2) \\
    \sqrt{(U_1(\cosh(\kappa R) - 1) + 1)^2 - 1}\, \sin(2\pi U_2)
    \end{pmatrix}
\end{equation}
is uniformly distributed on $B_R(o_\kappa)$.

For $d = 2$, we again illustrate Proposition~\ref{PropConvClust} by simulation, now on $\hbb^2_2$. To compare against the Euclidean threshold model, we set $\delta = 1$ and $t = \arcosh(3)/2$, so that $\delta\breve{V}^2_t(2) = \pi$ and, by Corollary~\ref{CorEmpiricalDegreeDistribution}, the empirical degree distribution converges to the Poisson law with mean $\pi \approx 3.14$, as in the Euclidean case. The limiting clustering coefficient of Proposition~\ref{PropConvClust} is $\bigl(1 - e^{-\pi}(1+\pi)\bigr)\,\breve{p}_2(\arcosh(3)) \approx 0.404$, strictly below the Euclidean value $\approx 0.482$. Taking $E_n = B_n(0)$, condition~\eqref{CondSummable} holds since $\lambda_{\breve{g}}(B_n(o_\kappa))$ grows exponentially in $n$, so the convergence is almost sure. We sampled 1000 independent realizations of $G$ restricted to $B_{4.4}(o_\kappa)$ and computed the clustering coefficient $\widehat{C}_R$ of the vertices in $B_R(o_\kappa)$ for a range of radii $R \leq 3.5$; this ensures that the local clustering coefficients entering $\widehat{C}_R$ are free of boundary truncation. Figure~\ref{fig:hyp_clust} shows the concentration of $\widehat{C}_R$ around the limit, together with a single realization illustrating the almost-sure convergence.
\begin{figure}[h]
\centering
\includegraphics[width=.95\linewidth]{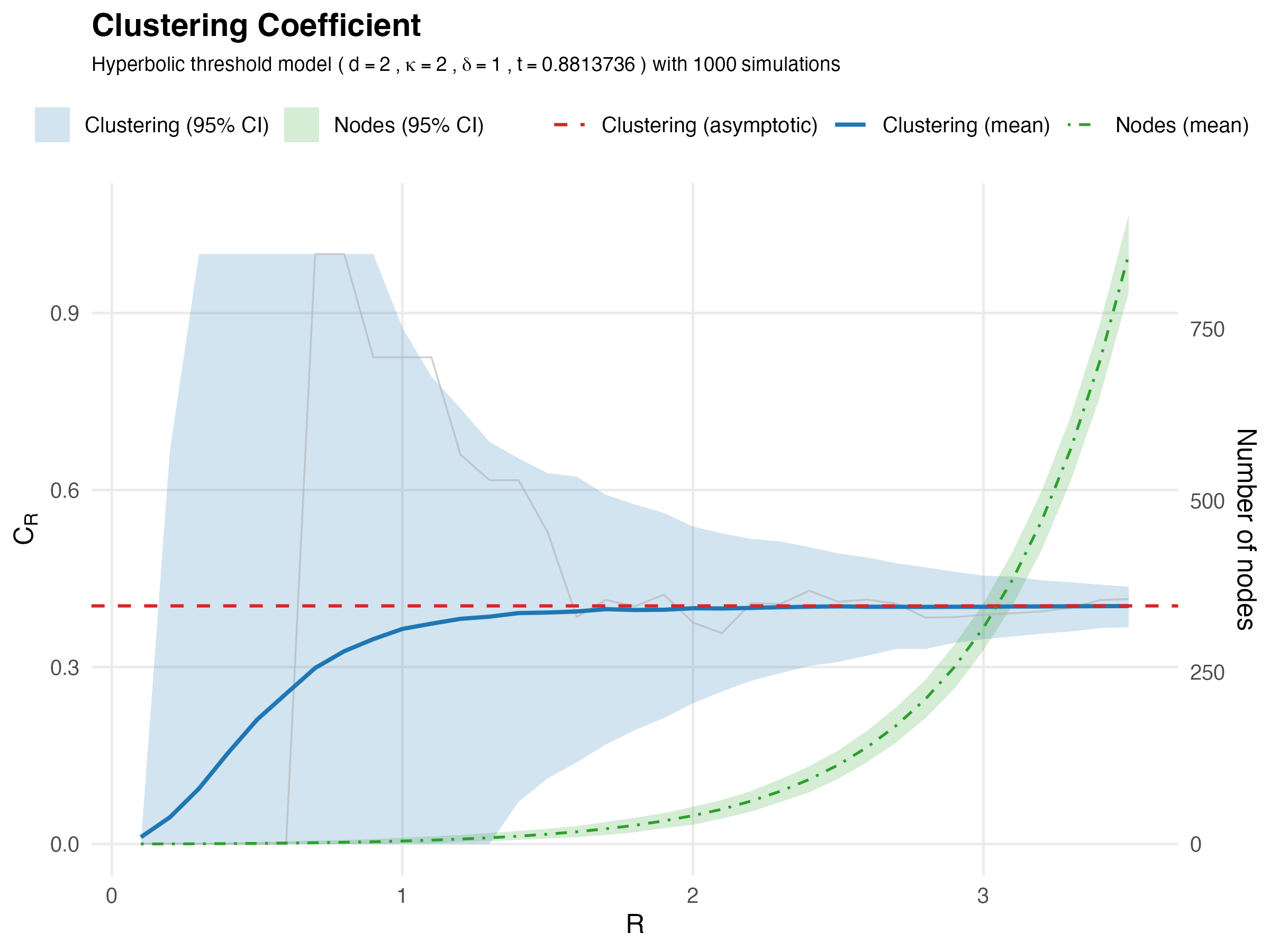}
\caption{Empirical clustering coefficient $\widehat{C}_R$ of the $2$-dimensional hyperbolic threshold model with $\delta = 1$ and $t = \arcosh(3)/2$. Dark blue: mean of $\widehat{C}_R$ over 1000 realizations; light-blue band: empirical $2.5\%$ and $97.5\%$ quantiles; grey: one randomly selected realization; dashed red: the asymptotic limit $\approx 0.404$ (Proposition~\ref{PropConvClust}). Dark green and light-green band (right axis): mean and corresponding quantiles of the number of vertices in $B_R(o_\kappa)$.}
\label{fig:hyp_clust}
\end{figure}

\begin{figure}[h]
\centering
\includegraphics[width=.6\linewidth]{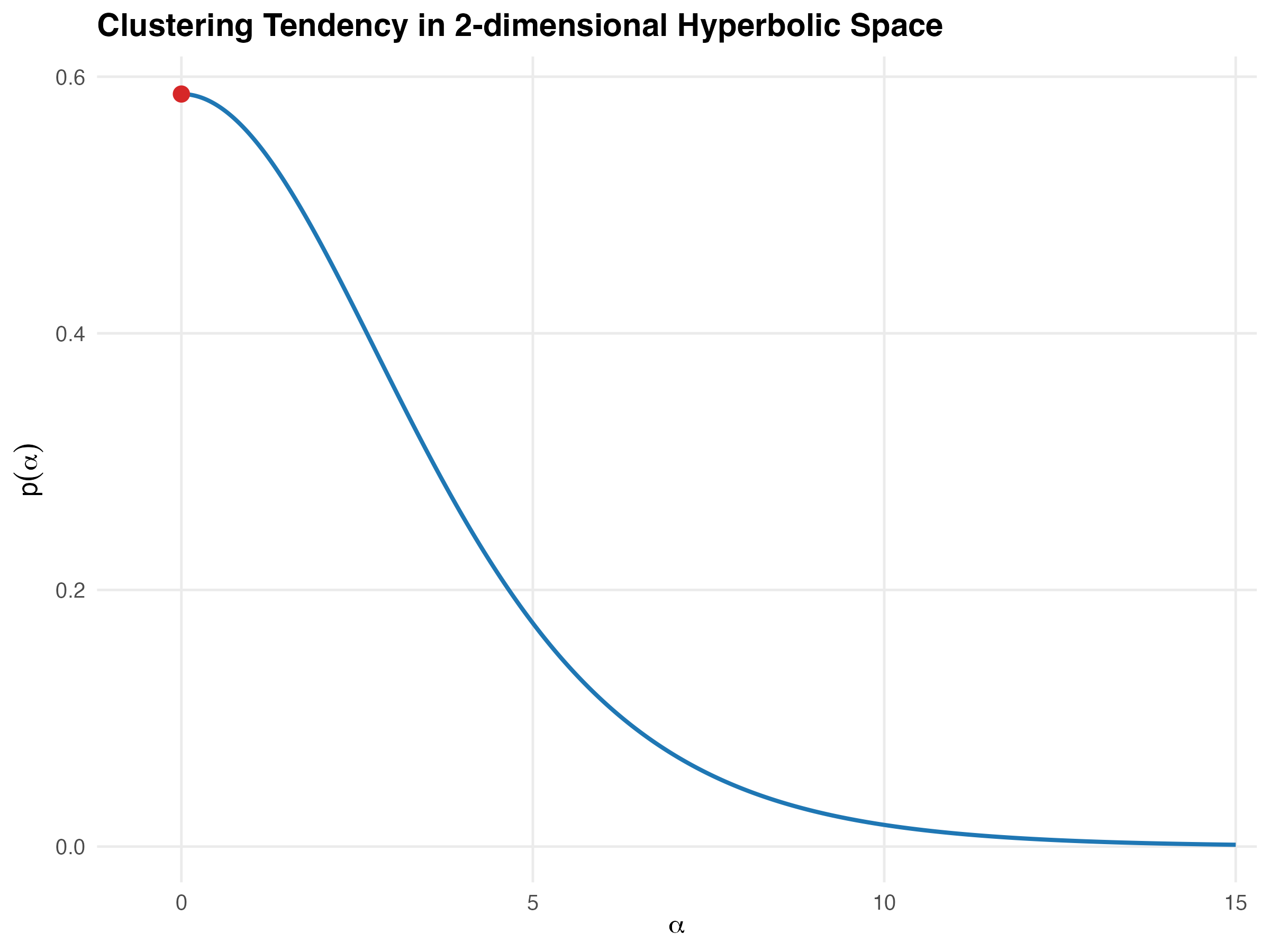}
\caption{The clustering tendency $\breve{p}_2(\alpha)$ of $2$-dimensional hyperbolic space, plotted as a function of $\alpha \coloneqq \kappa t$ (see~\eqref{EqClustTendHyp}). As $\alpha \to 0^+$ it converges to its Euclidean counterpart $\overline{p}_2 = 1 - 3\sqrt{3}/(4\pi) \approx 0.5865$, marked by the red dot, with quadratic approach by Proposition~\ref{PropResultsOnClustTend}(iii); as $\alpha \to \infty$ it decays to zero, exponentially by Proposition~\ref{PropResultsOnClustTend}(iv).}
\label{fig:p_clust}
\end{figure}

Although $G$ again has a Poisson degree distribution with mean $\pi$, it exhibits strictly lower asymptotic clustering than its Euclidean counterpart. This illustrates Proposition~\ref{PropResultsOnClustTend}(i): the hyperbolic clustering tendency is strictly below the Euclidean one, $\breve{p}_2(\alpha) < \overline{p}_2$ for all $\alpha > 0$ (see Figure~\ref{fig:p_clust}). Hyperbolic space therefore tunes the asymptotic clustering coefficient independently of the asymptotic degree distribution: at a matched degree distribution, the hyperbolic threshold model always produces a strictly lower asymptotic clustering coefficient.

\section{Conclusion}
\label{SectionConclusion}

This paper studies graph processes --- a class of random graph models in which vertices are identified with the points of a point process on a latent Polish space, and edges between distinct vertices are drawn conditionally independently via a connection kernel. Boundedly exchangeable graph processes recover classical matrix exchangeability on bounded observation windows: every nontrivial bounded restriction determines an infinite jointly exchangeable adjacency matrix, unique in distribution, and hence a local graphon representation. The global graph, by contrast, may be sparse and admit no jointly exchangeable representation. The threshold model on Riemannian manifolds serves as a geometrically natural example, demonstrating that the geometry of the latent space is directly reflected in the macroscopic properties of the network. Once the limiting degree distribution is matched across manifolds by a joint choice of density and threshold, curvature acts as an additional geometric parameter affecting the limiting clustering coefficient: at the matched degree law, hyperbolic space produces strictly smaller limiting clustering than Euclidean space. This makes graph processes a flexible and geometrically interpretable tool for modelling large sparse networks.

Several natural avenues for future research present themselves. On the modelling side, the asymptotic analysis of Section~\ref{SectionApplications} could be extended from the threshold model to general distance-based kernels $W(x,y) = h(d_g(x,y))$, whose connection probabilities decay smoothly with distance rather than cutting off sharply, or to manifolds of locally varying curvature. The latter would forgo the homogeneity underlying our basepoint-free degree and clustering statistics, but could let these structural properties --- such as clustering or degree heterogeneity --- vary across different parts of the graph. On the statistical side, the explicit dependence of the limiting degree and clustering statistics on curvature suggests using them to infer model parameters, and the geometry of the latent space itself, from observed network data --- a significant step towards practical applicability.

\section*{Acknowledgements}

The authors acknowledge the use of the AI-assisted tools ChatGPT (OpenAI; several versions of GPT-5; accessed via chatgpt.com) and Claude (Anthropic; Claude 4.8 Opus and Claude 5 Sonnet, accessed via claude.ai).

These tools were used to suggest improvements to wording and grammar, to flag possible minor errors or omissions in the mathematical exposition for subsequent independent review, and to assist with LaTeX table formatting and code used to format plots. The authors evaluated every suggestion and independently verified all mathematical statements, proofs, references, numerical values, tables, and figures. 

All scientific content, interpretations, and conclusions were fully created, verified, and approved by the authors. The authors take full responsibility for the content of the manuscript.

\section*{Funding information}

Leon-Roman Rinke's position at Leibniz Universität Hannover was fully financed through the dual PhD programme of the House of Insurance, which is supported by the insurance industry. This financing entailed no obligations regarding the research, and the supporting institutions had no role in the design or conduct of the research, its analysis or interpretation, or the preparation of this manuscript.

\section*{Competing interests}

There are no competing interests to declare which arose during the preparation or publication process of this article.

\printbibliography

\section{Appendix}
\label{SectionAppendix}

\subsection{Proof of Lemma~\ref{LemInvarianceMeasurePresTrans}}

\begin{proof}[Proof of Lemma~\ref{LemInvarianceMeasurePresTrans}]
For $m\in\nbb$, write $\Pcal_m$ for the $\sigma$-algebra generated by the dyadic partition of $[0,1]$ into $N_m \coloneqq 2^m$ intervals $\{I_{m,1},\ldots,I_{m,N_m}\}$ of equal length. The process
\[
    W_m
    \coloneqq
    \ebb\bigl[
        W\mid \Pcal_m^{\otimes 2}
    \bigr].
\]
is a martingale with respect to the filtration $(\Pcal_m^{\otimes 2})_{m \in \nbb}$. Observe that $W_m$ is constant on every rectangle $I_{m,i}\times I_{m,j}$. Since $W$ is bounded and the dyadic rectangles generate $\Bcal([0,1]^2)$, martingale convergence (see, e.g., \cite[Theorem~9.24]{Kallenberg2021}) yields
\[
    \lVert W_m-W\rVert_1\longrightarrow0.
\]

Every permutation $\sigma$ of $\{1,\ldots,N_m\}$ is induced, up to changes at finitely many endpoints, by a $\lambda$-preserving bijection $T_\sigma$ satisfying
\[
    T_\sigma(I_{m,i})=I_{m,\sigma(i)}.
\]
The assumed invariance of $W$ therefore implies that the averages of $W$ over $I_{m,i}\times I_{m,j}$ and $I_{m,\sigma(i)}\times I_{m,\sigma(j)}$ coincide. Since the symmetric group acts transitively on the diagonal pairs $(i,i)$ and on the off-diagonal pairs $(i,j)$, $i\neq j$, there exist $\alpha_m,\beta_m\in[0,1]$ such that
\[
    W_m
    =
    \alpha_m
    \quad\text{off }\Delta_m,
    \qquad
    W_m
    =
    \beta_m
    \quad\text{on }\Delta_m,
\]
where
\[
    \Delta_m
    \coloneqq
    \bigcup_{i=1}^{N_m}I_{m,i}^2,
    \qquad
    \lambda^{\otimes2}(\Delta_m)=\frac{1}{N_m}.
\]

Let $c\coloneqq\int_{[0,1]^2}W\,d\lambda^{\otimes2}$. Since conditional expectation preserves integrals,
\[
    c
    =
    \left(1-\frac{1}{N_m}\right)\alpha_m
    +
    \frac{1}{N_m}\beta_m.
\]
Hence $\vert\alpha_m-c\vert\leq N_m^{-1}$, while $\lVert W_m-\alpha_m\rVert_1\leq N_m^{-1}$. Consequently,
\[
    \lVert W_m-c\rVert_1
    \leq \frac{2}{N_m}
    \longrightarrow0.
\]
Together with $\lVert W_m-W\rVert_1\to0$, the triangle inequality gives $\lVert W-c\rVert_1=0$. Thus
\[
    W=c
    \qquad \lambda^{\otimes2}\text{-a.e.}
\]
\end{proof}

\subsection{Spherical polar coordinates in euclidean space} \label{SubsectionSphericalPolEuc}

In spherical polar coordinates, a point $(x_1, \dots, x_d) \in \rbb^d$ ($d \geq 2$) is given by
\begin{align*}
    x_1 &= r\cos(\theta_1) \\
    x_2 &= r\sin(\theta_1)\cos(\theta_2) \\
    x_3 &= r\sin(\theta_1)\sin(\theta_2)\cos(\theta_3) \\
    &\vdots \\
    x_{d-1} &= r\sin(\theta_1)\cdots \sin(\theta_{d-2})\cos(\theta_{d-1}) \\
    x_d &= r\sin(\theta_1)\cdots \sin(\theta_{d-2})\sin(\theta_{d-1})
\end{align*}
for $r \in (0, \infty)$, $\theta_1, \dots, \theta_{d-2} \in (0, \pi)$, and $\theta_{d-1} \in (0, 2\pi)$, with the convention that for $d = 2$ the only angle is $\theta_1 \in (0, 2\pi)$. This parametrizes $\rbb^d$ up to a $\lambda_d$-null set, namely the half-hyperplane $\{x_d = 0,\ x_{d-1} \geq 0\}$.

In these coordinates, the volume element of the Lebesgue measure $\lambda_d$ on $\rbb^d$ satisfies
\begin{equation*}
    \mathrm{d}\lambda_d = r^{d-1}\sin^{d-2}(\theta_1)\sin^{d-3}(\theta_2)
    \cdots\sin(\theta_{d-2})\,\mathrm{d}r\,\mathrm{d}\theta_1\cdots
    \mathrm{d}\theta_{d-1}.
\end{equation*}
We write
\begin{equation*}
    \mathrm{d}\omega \coloneqq \sin^{d-2}(\theta_1)\sin^{d-3}(\theta_2)
    \cdots\sin(\theta_{d-2})\,\mathrm{d}\theta_1\cdots\mathrm{d}\theta_{d-1},
\end{equation*}
the surface area element of the unit sphere $S^{d-1}$, with total mass $\int_{S^{d-1}}\mathrm{d}\omega = 2\pi^{d/2}/\Gamma(d/2)$, to obtain
\begin{equation*}
    \mathrm{d}\lambda_d = r^{d-1}\,\mathrm{d}r\,\mathrm{d}\omega.
\end{equation*}

\subsection{Spherical polar coordinates in hyperbolic space} \label{SubsectionSphericalPolHyp}

We proceed similarly to Euclidean space. Define a point $(x_0, x_1, \dots, x_d) \in \hbb^d_{\kappa}$ ($d \geq 2$) by
\begin{align*}
    x_0 &= \frac{1}{\kappa}\cosh(\kappa r) \\
    x_1 &= \frac{1}{\kappa}\sinh(\kappa r)\cos(\theta_1) \\
    x_2 &= \frac{1}{\kappa}\sinh(\kappa r)\sin(\theta_1)\cos(\theta_2) \\
    &\vdots \\
    x_{d-1} &= \frac{1}{\kappa}\sinh(\kappa r)\sin(\theta_1)\cdots
    \sin(\theta_{d-2})\cos(\theta_{d-1}) \\
    x_d &= \frac{1}{\kappa}\sinh(\kappa r)\sin(\theta_1)\cdots
    \sin(\theta_{d-2})\sin(\theta_{d-1})
\end{align*}
for $r \in (0, \infty)$, $\theta_1, \dots, \theta_{d-2} \in (0, \pi)$, and $\theta_{d-1} \in (0, 2\pi)$, with the convention that for $d = 2$ the only angle is $\theta_1 \in (0, 2\pi)$. This parametrizes $\hbb^d_{\kappa}$ up to a $\lambda_{\breve{g}}$-null set, namely $\{x \in \hbb^d_{\kappa} : x_d = 0,\ x_{d-1} \geq 0\}$.

In these coordinates, the volume element of the Riemannian volume measure $\lambda_{\breve{g}}$ on $\hbb^d_{\kappa}$ satisfies
\begin{equation*}
    \mathrm{d}\lambda_{\breve{g}} = \frac{1}{\kappa^{d-1}}
    \sinh^{d-1}(\kappa r)\sin^{d-2}(\theta_1)\sin^{d-3}(\theta_2)
    \cdots\sin(\theta_{d-2})\,\mathrm{d}r\,\mathrm{d}\theta_1\cdots
    \mathrm{d}\theta_{d-1}.
\end{equation*}
With $\mathrm{d}\omega$ the surface area element of the unit sphere $S^{d-1}$ from Appendix~\ref{SubsectionSphericalPolEuc} this becomes
\begin{equation*}
    \mathrm{d}\lambda_{\breve{g}} = \frac{1}{\kappa^{d-1}}
    \sinh^{d-1}(\kappa r)\,\mathrm{d}r\,\mathrm{d}\omega.
\end{equation*}

\subsection{Proof of Proposition~\ref{PropResultsOnClustTend}} \label{SubsectionResultOnClustTend}

\begin{proof}[Proof of Proposition~\ref{PropResultsOnClustTend}]
We start with the first part. By scale invariance, we may normalize the threshold to $1$. Use spherical polar coordinates around the centre of the ball, and write the radial coordinates of two sampled points as $r,s\in[0,1]$ and the smaller angle between their radial directions as $\theta\in[0,\pi]$. In the Euclidean ball, the radial distribution function is $F_0(r)=r^d$. In the hyperbolic
ball, it is
\[
        F_\alpha(r)
        =
        \frac{\int_0^r \sinh^{d-1}(\alpha u)\,du}
        {\int_0^1 \sinh^{d-1}(\alpha u)\,du}.
\]
Since $u\mapsto \sinh(\alpha u)/u$ is strictly increasing on $(0,\infty)$,
\[
        F_\alpha(r)
        =
        \frac{\int_0^r u^{d-1}\left(\frac{\sinh(\alpha u)}{u}\right)^{d-1}\,du}
        {\int_0^1 u^{d-1}\left(\frac{\sinh(\alpha u)}{u}\right)^{d-1}\,du}
        \le
        \frac{\int_0^r u^{d-1}\,du}{\int_0^1 u^{d-1}\,du}
        =
        r^d
        =
        F_0(r),
\]
with strict inequality for $r \in (0,1)$ and $\alpha>0$. 

Let $q_0(r,s)$ and $q_\alpha(r,s)$ be the conditional probabilities, over the angle, that two points at radii $r,s$ are at distance at most $1$ in the Euclidean and hyperbolic balls, respectively. We claim that $q_\alpha(r,s) \leq q_0(r,s)$. When $r+s \leq 1$, this follows trivially from the triangle inequality. Otherwise, let $\gamma_0$ and $\gamma_\alpha$ be the Euclidean and hyperbolic angles opposite a side of length $1$ in triangles with remaining side lengths $r,s$. Then $d_0\le 1$ is equivalent to $\theta\le\gamma_0$, and $d_\alpha\le 1$ is equivalent to $\theta\le\gamma_\alpha$. It remains to compare the two critical angles. Set $\sigma=(r+s+1)/2$. The Euclidean and hyperbolic laws of cosines give
\[
        \sin^2\frac{\gamma_0}{2}
        =
        \frac{(\sigma-r)(\sigma-s)}{rs},
        \qquad
        \sin^2\frac{\gamma_\alpha}{2}
        =
        \frac{
        \sinh(\alpha(\sigma-r))\sinh(\alpha(\sigma-s))
        }{
        \sinh(\alpha r)\sinh(\alpha s)
        }.
\]
Since $r,s\le 1$ and $r+s>1$, we have $0<\sigma-r\le s$ and $0<\sigma-s\le r$. Again using that $y\mapsto \sinh(\alpha y)/y$ is strictly increasing, we obtain
\[
        \frac{\sinh(\alpha(\sigma-r))}{\sigma-r}
        \le
        \frac{\sinh(\alpha s)}{s},
        \qquad
        \frac{\sinh(\alpha(\sigma-s))}{\sigma-s}
        \le
        \frac{\sinh(\alpha r)}{r}.
\]
Multiplying these inequalities yields
\[
        \sin^2\frac{\gamma_\alpha}{2}
        \le
        \sin^2\frac{\gamma_0}{2}.
\]
Both angles lie in $[0,\pi]$, so $\gamma_\alpha\le\gamma_0$. Since the distribution of $\theta$ is the same in both cases, this proves $q_\alpha(r,s)\le q_0(r,s)$.

Couple the radial variables by independent $U,V\sim \mathrm{U}[0,1]$, setting
\[
        R_0=F_0^{-1}(U),\quad S_0=F_0^{-1}(V),
        \qquad
        R_\alpha=F_\alpha^{-1}(U),\quad S_\alpha=F_\alpha^{-1}(V).
\]
Since $F_\alpha<F_0$ on $(0,1)$, we have $R_\alpha>R_0$ and $S_\alpha>S_0$ almost surely. Therefore
\[
        \breve p_d(\alpha)
        =
        \ebb[q_\alpha(R_\alpha,S_\alpha)]
        \le
        \ebb[q_0(R_\alpha,S_\alpha)]
        \le
        \ebb[q_0(R_0,S_0)]
        =
        \overline{p}_d.
\]
The second inequality is strict because $\pbb(R_0+S_0>1)>0$, and on this event $q_0$ is strictly decreasing in both arguments while $R_\alpha>R_0$ and $S_\alpha>S_0$ almost surely. Hence $\breve p_d(\alpha)<\overline{p}_d$. Using the same line of reasoning one shows that $\breve{p}_d(\alpha') < \breve{p}_d(\alpha)$ when $\alpha' > \alpha$.

For the third claim, write
\[
\breve{p}_2(\alpha)
=
\int_0^1 P_\alpha(x)
\bigl(A_\alpha(x)-B_\alpha(x)\bigr)\,dx,
\]
where
\[
P_\alpha(x)
\coloneqq
\frac{2\alpha\sinh(\alpha x)}
{\pi(\cosh\alpha-1)^2},
\]
\[
A_\alpha(x)
\coloneqq
\cosh(\alpha)
\arccos\!\left(
\coth(\alpha)\tanh\!\left(\frac{\alpha x}{2}\right)
\right),
\]
and
\[
B_\alpha(x)
\coloneqq
\arctan\!\left(
\frac{
\sqrt{\cosh^2(\alpha)-\cosh^2(\alpha x/2)}
}{
\sinh(\alpha x/2)
}
\right).
\]
We expand each term as $\alpha \to 0^+$. For $P_{\alpha}(x)$ we obtain
\[
P_\alpha(x)
=
\frac{8x}{\pi\alpha^2}
+
\frac{4x(x^2-1)}{3\pi}
+
O(\alpha^2).
\]
For $A_{\alpha}(x)$ we obtain
\[
A_\alpha(x)
=
\arccos\!\left(\frac{x}{2}\right)
+
\alpha^2 A_2(x)
+
\alpha^4 A_4(x)
+
O(\alpha^6),
\]
where
\[
A_2(x)
=
\frac12\arccos\!\left(\frac{x}{2}\right)
-
\frac{x\sqrt{4-x^2}}{12},
\]
and
\[
A_4(x)
=
\frac{
7x^3\sqrt{4-x^2}
-
52x\sqrt{4-x^2}
+
60\arccos(x/2)
}{1440}.
\]
For $B_{\alpha}(x)$ we obtain
\[
B_\alpha(x)
=
\arccos\!\left(\frac{x}{2}\right)
+
\alpha^2 B_2(x)
+
\alpha^4 B_4(x)
+
O(\alpha^6),
\]
where
\[
B_2(x)
=
\frac{x\sqrt{4-x^2}}{24},
\]
and
\[
B_4(x)
=
-
\frac{x(x^2+14)\sqrt{4-x^2}}{2880}.
\]
It follows that
\[
P_\alpha(x)\bigl(A_\alpha(x)-B_\alpha(x)\bigr)
=
F_0(x)+\alpha^2F_2(x)+O(\alpha^4),
\]
uniformly on $[0,1]$, with
\[
F_0(x)
=
\frac{1}{\pi}\left( 4x\arccos\!\left(\frac{x}{2}\right)
-
x^2\sqrt{4-x^2}\right),
\]
and
\[
F_2(x)
=
\frac{
16x^3\arccos(x/2)
-
8x\arccos(x/2)
-
3x^4\sqrt{4-x^2}
-
2x^2\sqrt{4-x^2}
}{24\pi}.
\]
We now integrate these terms. Observe that
\[
\int_0^1 F_0(x)\,dx
= \overline{p}_2.
\]
For the correction term, direct integration gives
\[
\int_0^1 F_2(x)\,dx
=
-\frac{\sqrt3}{16\pi},
\]
so
\[
\breve{p}_2(\alpha)
=
\overline{p}_2
-
\frac{\sqrt3}{16\pi}\alpha^2
+
O(\alpha^4)
\qquad \text{as } \alpha\to 0^+.
\]

For the final part, note that $0\le B_\alpha(x)\le \pi/2$, so
\[
0
\le
\int_0^1
\frac{2\alpha\sinh(\alpha x)}
{\pi(\cosh(\alpha)-1)^2}
B_\alpha(x)\,dx
\le
\frac{1}{\cosh(\alpha)-1}
=
O(e^{-\alpha}).
\]
It remains to analyse the $A_\alpha$-term. Setting $y = \alpha (1-x)$ gives
\[
I_\alpha
\coloneqq
\int_0^1
\frac{2\alpha\sinh(\alpha x)}
{\pi(\cosh(\alpha)-1)^2}
A_\alpha(x)\,dx
=
\int_0^\alpha
\frac{2\sinh(\alpha-y)}
{\pi(\cosh(\alpha)-1)^2}
A_\alpha\!\left(1-\frac{y}{\alpha}\right)\,dy.
\]
For fixed $y \geq 0$, as $\alpha\to\infty$,
\[
\frac{2\sinh(\alpha-y)\cosh(\alpha)}
{\pi(\cosh(\alpha)-1)^2}
=
\frac{2}{\pi}e^{-y} + O(e^{-\alpha}).
\]
Moreover,
\[
\coth(\alpha)
\tanh\!\left(\frac{\alpha-y}{2}\right)
=
1-2e^{y-\alpha}+O(e^{-2\alpha}),
\]
and therefore
\[
\arccos\!\left(
\coth(\alpha)
\tanh\!\left(\frac{\alpha-y}{2}\right)
\right)
=
2e^{(y-\alpha)/2} + O(e^{-\alpha}).
\]
Consequently,
\[
e^{\alpha/2}
\frac{2\sinh(\alpha-y)}
{\pi(\cosh(\alpha)-1)^2}
A_\alpha\!\left(1-\frac{y}{\alpha}\right)
\to
\frac{4}{\pi}e^{-y/2}.
\]
pointwise on $y \geq 0$. By dominated convergence,
\[
\lim_{\alpha\to\infty} e^{\alpha/2} I_\alpha
=
\int_0^\infty \frac{4}{\pi}e^{-y/2}\,dy
=
\frac{8}{\pi}.
\]
Combining this with the estimate for the $B_\alpha$-term gives
\[
\breve{p}_2(\alpha)
=
\frac{8}{\pi}e^{-\alpha/2}
+
o(e^{-\alpha/2})
\qquad \text{as } \alpha\to\infty.
\]
\end{proof}

\end{document}